\pgfplotsset{compat=newest}
\newtheorem{lemma}{Lemma}
\newtheorem{corollary}{Corollary}
\newtheorem{assumption}{Assumption}
\newtheorem{theorem}{Theorem}
\newtheorem{remark}{Remark}
\newtheorem{definition}{Definition}
\newtheorem{proposition}{Proposition}
\renewenvironment{itemize} % removes skips between itemsimportance sampling
        {\begin{list}
                {$\bullet$}{\setlength{\parskip}{0cm} \setlength{\topsep}{0cm}
                 \setlength{\partopsep}{0cm} \setlength{\itemsep}{0cm} \setlength{\parsep}{0cm}\item[]}}
        {\end{list}}
\theoremstyle{plain}
\newtheorem{theorem}{Theorem}
\newtheorem{lemma}{Lemma}
\newtheorem{corollary}{Corollary}
\newtheorem{proposition}{Proposition}
\theoremstyle{definition}
\newtheorem{definition}{Definition}
\theoremstyle{remark}
\newtheorem{remark}{Remark}\fi
\newcommand{\nneg}{\mathbb{R}_+}
\newcommand{\zplus}{\mathbb{Z}_+}
\newcommand{\pr}{\mathbb{P}}
\newcommand{\ex}{\mathbb{E}}
\newcommand{\ind}{\mathbb{I}}
\newcommand{\eps}{\varepsilon}
\newcommand{\dlt}{\delta}
\newcommand{\lmb}{\lambda}
\newcommand{\lf}{\lfloor}
\newcommand{\rf}{\rfloor}
\newcommand{\lc}{\lceil}
\newcommand{\rc}{\rceil}
\newcommand{\wtil}{\widetilde}
\newcommand{\dcopy}{\mathop{\rm =}\limits^{\text{d}}} %DO I NEED THIS
\begin{document}

%\title{Random fluid limit for bitcoin blockchain dynamics}

\title[A Bitcoin-inspired infinite-server model 
with a random fluid limit]{A Bitcoin-inspired infinite-server model \\
with a random fluid limit}
\author{Maria Frolkova \& Michel Mandjes}
\date{\today}

\maketitle

\begin{abstract}
The synchronization process inherent to the Bitcoin network gives rise to an infinite-server model with the unusual feature that customers interact. Among the closed-form characteristics that we derive for this model is the busy period distribution which, counterintuitively, does not depend on the arrival rate. We explain this by exploiting the equivalence between two specific service disciplines, which is  also used to derive the model's stationary distribution.
% (in a generalized sense, since the model is non-Markov). 
Next to these closed-form results, the second major contribution concerns an asymptotic result: a fluid limit in the presence of service delays. Since fluid limits arise under scalings of the law-of-large-numbers type, they are usually deterministic, but in the setting of the model discussed in this paper the fluid limit is random (more specifically, of growth-collapse type). 

\vspace{2mm}

\noindent
{\sc Key words.} Stochastic-process limit $\circ$ fluid limit $\circ$ growth-collapse process $\circ$ Bitcoin.

\vspace{2mm}

\noindent {\sc Affiliations.} M. Frolkova and M. Mandjes are with Korteweg-de Vries Institute for Mathematics, University of Amsterdam, Science Park 105-107, 1098 XG Amsterdam, the Netherlands. 
M. Frolkova is corresponding author (email: {\scriptsize\tt maria.frolkova@gmail.com}). 
\end{abstract}

\section{Introduction} 
The {\it Bitcoin network} is a payment system where all transactions between participants are carried out in the digital Bitcoin currency. These transactions are stored in a database called {\it blockchain}. In fact, every Bitcoin user maintains a blockchain version, thus keeping track of the global Bitcoin transaction history. This ensures transparency --- one of the advertized Bitcoin features. Information spreads across the network via peer-to-peer communication. All transactions get broadcast, and certain users arrange them into blocks suitable to add to the blockchain. The blocks get broadcast, too, and other users update their blockchain versions as soon as they hear of new blocks. It is vital for the network to synchronize
%, or agree on every block in the blockchain, 
as soon as possible because the Bitcoin payment system is trustworthy to the degree it is synchronized. This explains the need for models that capture 
the desyncronization caused by communication delays. Due to such delays, some Bitcoin users are not up to date compared to others, meaning that their blockchain versions lack some blocks compared to the blockchain versions of others. This desynchronization effect motivates the model analyzed in the present paper.
%The model analyzed in the present paper describes this desyncronization effect. 
It can be seen as a seemingly minor modification of a conventional queueing system, but this modification gives rise to various unexpected features and, overall, intrinsically different behavior.

In our model we assume Bitcoin-like communication between two parties, where party~A generates pieces of information and sends them off to party B, and B receives these information pieces with delay, i.e.\ B is not up to date compared to A. Such communication can be described by an infinite-server model where newly generated information pieces correspond to arrivals and their transmission corresponds to receiving service at parallel servers. An unconventional feature of this model is that  customers do interact here (usually, in infinite-server models, customers do not interfere and act independently, which often facilitates explicit analysis). This  interaction is inspired by the following mechanism in the blockchain dynamics. In the blockchain, each next block contains a reference to the previous one. Suppose participant B receives a block from participant A who knows more than B, and this new block does not directly link to B's last block. Then B requests from A the new block's predecessors using the references to the previous blocks. Once all the missing blocks have been retrieved, B adds the entire batch to his blockchain. On a somewhat stylized level, such update in batches translates  to the following service discipline: whenever there is a service completion, then {\it all earlier arrivals} that are still in the system complete service and depart as well. That is, departures occur in batches, which we refer to as FIFO-batches. This departure mechanism leads to crucial differences compared to the conventional infinite-server model where customers depart one by one upon their own service completion.

In the above model, we assume a renewal arrival process and exponentially distributed service times. Our first contribution concerns a number of closed-form characteristics, in particular for the busy period distribution. Remarkably, this distribution turns out to {\it not} depend on the inter-arrival distribution or the arrival rate, contradicting the intuition that a large arrival rates  should result in a longer busy period. This seeming paradox is resolved relying on an equivalent service discipline --- LIFO-batch departures. Distributionally, the model's evolution is the same under either discipline. This equivalence also makes it possible to identify the corresponding stationary distribution (in the generalized sense, since the underlying process is not Markov due to renewal arrivals). It is unclear whether this result could be obtained analytically under the original FIFO-batch departures discipline. Other closed-form results are obtained under the additional assumption of Poisson arrivals (such that the queue-length process is a continuous-time Markov chain): by direct methods, we derive moments of the stationary distribution and the queue-size distribution at any given time instant.

The second strand of results that we obtain is of an asymptotic nature. We develop a fluid limit for the queue-length process in presence of service delays. As fluid limits arise under law-of-large-numbers scalings, they are typically deterministic; this is for instance the case for the conventional infinite-server model. In our model, however, the fluid limit is radically different: rather than being deterministic, in the fluid limit some randomness is retained. More specifically, the fluid limit corresponds to a growth-collapse process: it grows linearly between randomly occurring downward jumps, where the jump rate is proportional to the current position. At each jump, the state undergoes a uniform cut-off (that is, the position drops to a level that is uniformly distributed between 0 and the pre-jump position). Growth-collapse processes have been studied in the literature and are used to describe a wide range of phenomena ranging from seismic to neuronal activity; see e.g.~\cite{OnnoGC, Last}  and references therein.

Interestingly, growth-collapse processes similar to the the fluid limit in our paper arise as scaling limits of Markovian AIMD (additive-increase/multiplicative-decrease) congestion control schemes analysed by Dumas et al.\ \cite{AIMD1} and  Guillemin et al.\ \cite{AIMD2}. However, the generator approach that quickly yields the fluid limits in \cite{AIMD1,AIMD2}  is non-applicable to our non-Markovian model. In this regard, this paper suggests a non-Markovian counterpart for the fluid limit analysis in \cite{AIMD1,AIMD2}. We present the proof of the fluid limit theorem under FIFO-/LIFO-batch departures, which correspond to the uniform cut-offs in the fluid limit. However, the proof is sufficiently flexible to allow for more general service disciplines that  result in generally distributed multiplicative cut-offs in the fluid limit. 

In more detail, the following techniques are used in the proof of the fluid-limit theorem. First, we identify an embedded Markov chain that converges to the post- and pre-jump fluid limit levels. This part of the proof builds on the embedded Markov chain analysis in \cite{AIMD1,AIMD2}. The derivation of the Weibullian increments between the jumps in the fluid limit (which yield the jump rate proportional to the state)  is similar to the corresponding derivation in  \cite{AIMD1,AIMD2}. The derivation of the uniform cut-offs in the fluid limit is more involved than the corresponding conclusion in \cite{AIMD1,AIMD2}, where the multiplicative-decrease factor is fixed throughout the pre-limit models and is directly inherited by the fluid limit. In our case, the downward jumps of the embedded Markov chain only become uniform in the limit, which we additionally prove. In the second part of the proof, we establish convergence of the continuous-time  queue-length process based  on the convergence of the embedded Markov chain. We follow the conventional approach: first we show convergence of finite-dimensional distributions and then verify a compact-containment and oscillation-control conditions, which imply the relative-compactness of the pre-limit processes. As a final note on the proof techniques, we emphasize the following methodological difference with earlier work. In \cite{AIMD1,AIMD2}, the embedded Markov chain is used to find (the fluid limit of) the stationary distribution of the full Markov chain. The fluid limit of the full Markov chain itself is obtained via generators. In this paper, (the fluid limit of) the stationary distribution of the queue-length process is known directly. We use the embedded Markov chain to derive the fluid limit of the non-Markov queue-length process.

Besides the mentioned connection with the AIMD and growth-collapse literature, our paper also relates, on the modelling level,  to the Bitcoin study by G\"{o}bel et al.~\cite{PTaylor} on whether it could be profitable to abuse the Bitcoin protocol by hiding information instead of propagating it. The model that is the starting point for the analysis in~\cite{PTaylor} and our model are complementary in the following sense. G\"{o}bel et al.~\cite{PTaylor} consider Bitcoin-like communication between two parties that both generate new information and communicate it to each other, while we consider one-sided communication. On the other hand, \cite{PTaylor} omits some communication transitions to ensure analytical tractability of the model, whereas those are taken into account in the present paper. More specifically, \cite{PTaylor} assumes complete synchronization whenever communication takes place. However, unless the latest information has been communicated, the two parties only partially synchronize, i.e., remain desynchronized; we succeed to include this aspect in our model. %While the models in this paper and in~\cite{PTaylor} generalize each other in different aspects, neither of the analyses allows for easy incorporation of the omitted transitions. 

The rest of the paper is organized as follows. In Section~\ref{sec:stationary_analysis}, we introduce the model, discuss the insensitivity of the busy period distribution with respect to the arrival rate, present formulae for the stationary distribution and its moments (under Poisson arrivals). In Section~\ref{sec:transient_analysis}, we characterize the transient behavior: give a formula for the queue-length distribution at a fixed time (under Poisson arrivals) and present the fluid limit theorem. In Section~\ref{sec:proof_stationary_analysis}, we provide the proofs for Section~\ref{sec:stationary_analysis} (noting that some core ideas have already been discussed in Section~\ref{sec:stationary_analysis}). In Section~\ref{sec:proof_FL}, we prove the fluid limit theorem. In the appendix, we prove the results that assume Poisson arrivals. In the remainder of this section, we list the notation used throughout the paper. 

{\bf Notation}. To define $x$ as equal to $y$, we write $x:=y$ or $y=:x$. The set of non-negative integers is $\mathbb{Z}_+ := \{0,1,2,\ldots\}$, and the non-negative half-line is $\mathbb{R}_+ := [0,\infty)$. The space of functions $f \colon \nneg \to \nneg$  that are right-continuous with left limits is denoted by $\mathbf{D}$, it is endowed with the Skorokhod $J_1$-topology. For functions $f(x)$, $g(x)$ that are asymptotically equivalent as $x \to \infty$, i.e.\ such that $f(x)/g(x) \to 1$ as $x \to \infty$, we write $f(x) \sim g(x)$. All random elements in the paper are defined on the same probability space with probability measure $\mathbb{P}$ and expectation operator $\mathbb{E}$. The signs $\Rightarrow$ and $\dcopy$ stand for weak convergence and equality in distribution, respectively. The complement of an event $B$ is denoted by $\overline{B}$. Finally, the following types of notation are used for vectors: $(d_l )_{l =1}^n := (d_1, d_2, \ldots, d_n)$ and $(X_{l -1}, Y_l )_{l =1}^n := (X_0,Y_1,X_1,Y_2, \ldots, X_{n-1},Y_n)$.

\section{Stochastic model and its stationary analysis} \label{sec:stationary_analysis}
In this section, we introduce the model under study and discuss a number of its properties. First, we argue that two different service disciplines are equivalent. This equivalence provides insights into the results that we discuss next: an insensitivity property of the busy period and stationary characteristics of the model.

\subsection{Model description} \label{sec:coupling}
The model we analyze is a variant of one of the classical queueing models --- the $G/M/\infty$ queue --- where the service discipline is modified.

{\bf $G/M/\infty$-queue with FIFO-batch departures.}
The following aspects are the same in the conventional $G/M/\infty$ model and the variant $G/M/\infty$ under study. There are infinitely many servers that provide service at a unit rate. An arriving customer immediately starts service at one of the non-occupied servers. Customers arrive according to a renewal process of rate $\lambda$, and their service requirements are exponentially distributed with parameter $\mu$, mutually independent and also independent from the arrival process. What is different in the two models is the departure mechanism. In the conventional $G/M/\infty$ queue, a customer leaves the system upon service completion, and hence customers depart one by one. In the model we analyze, departures occur in batches. We assume additionally that customers line up in the queue in the order of arrival. (This assumption is not restrictive but convenient when referring to departure events.) Suppose the service of a particular customer has finished. Then it is not this customer only who leaves the system but those {\it in front of}\, him in the queue leave as well. %The analogy with the blockchain update mechanism follows if communication of blocks in the Bitcoin network is viewed as service. Upon receiving a block that does not directly link to its blockchain version, the Bitcoin user requests the missing predecessor blocks and, similarly to a FIFO-batch departure, adds the entire batch of newly acquired blocks to his blockchain at once.

We refer to a customer together with those in front of him in the queue as {\it a~FIFO (first-in-first-out) batch}, and to the modified $G/M/\infty$ model described above --- as the {\it $G/M/\infty$ queue with FIFO-batch departures}.

%\begin{remark}
%The analogy between FIFO-batch departures and the blockchain update mechanism is the following. Upon receiving a block that does not directly link to his blockchain version, the Bitcoin user requests the predecessor blocks 
%\end{remark}

{\bf Alternative interpretation: $G/M/\infty$ queue with LIFO-batch departures.} 
Assume now that, instead of FIFO batches, {\it LIFO (last-in-first-out) batches} are formed upon departures. That is, if there is a service completion, the freshly served customer leaves together with those {\it behind}\, him in the queue.

\begin{comment}
Note that the two departure policies are equivalent in the sense that a FIFO-batch departure ``cuts off" a front part of the queue, and a LIFO-batch departure --- the tail of the queue, but in both scenarios the post-departure queue size is a uniform sample of the queue size prior to the departure. Because of this symmetry, the system dynamics is the same distributionally under either discipline.
\end{comment}

Note that the two departure policies are symmetric. A FIFO-batch departure ``cuts off" a front part of the queue, and a LIFO-batch departure --- the tail of the queue. But, in both scenarios, the post-departure queue size is a uniform sample of the queue size prior to the departure. This equivalence, in combination with the memorylessness of the exponential service times, implies that the queue dynamics is distributionally the same under either discipline.

More formally, between two consecutive arrivals, both the $G/M/\infty$ queue with FIFO-batch departures and that with LIFO-batch departures evolve like a continuous-time Markov chain on $\mathbb{Z}_+$ with transition rates
\begin{equation} \label{eq:rates_down}
q(i,j) = \mu, \quad i \geq 1, \ 0 \leq j \leq i-1,
\end{equation}
and absorption at the zero state. Respectively, overall, the two models evolve like the Markov chain~\eqref{eq:rates_down} that gets interrupted according to a renewal process that is independent from the chain, and every interruption (i.e., arrival) increases the state of the chain by one. In particular, this coupling implies that the two queue-length processes viewed as random elements of the Skorokhod space $\mathbf{D}$ have the same distribution.

\begin{remark} {\em As discussed in the introduction, FIFO-batch departures align with the initial motivation for the model that comes from the blockchain update mechanism. LIFO-batch departures, on the other hand, are a more convenient assumption in some of the proofs (in the rest of the proofs, a particular interpretation does not matter). Under LIFO-batch departures, a customer is affected by those in front of him in the queue only but not by those who arrive after him, which turns out to simplify the analysis. Although some of the results, for example the insensitivity of the busy period distribution, are not intuitively clear under FIFO-batch departures, all results of the paper are valid for both FIFO- and LIFO-batch departures by the coupling discussed above.%While, as discussed in the introduction, FIFO-batch departures imitate the blockchain update process and constitute the original motivation, LIFO-batch departures are a more convenient assumption in some of the proofs (in the rest of the proofs, a particular interpretation does not matter). Under LIFO-batch departures, a customer is affected by those in front of him in the queue only but not by those who arrive after him, which turns out to simplify the analysis. Although some of the results, for example, the insensitivity of the busy period distribution, are not intuitively clear under FIFO-batch departures, all results of the paper are valid for both FIFO- and LIFO-batch departures by the coupling discussed above. 
}\end{remark}

\begin{remark} {\em Borst et al.~\cite{PS-ROS}, too, employ a probabilistic coupling to show the equivalence of two service disciplines, processor sharing and random order of service, in the $G/M/1$ queue. In~\cite{PS-ROS}, each customer of one queue is coupled with a customer in the other queue. In this paper, we do not couple the two models directly but through a third model. Note that a straightforward modification of the coupling used here provides an alternative to the coupling used in~\cite{PS-ROS}.}
\end{remark}

{\bf More notation.} We conclude the model description by introducing some model-related notation used frequently throughout the paper.

The queue size at time $t \geq 0$ is denoted by $Q(t)$. The ``stationary" distribution of the queue-length process is denoted by $\pi_k$, $k \in \mathbb{Z}_+$, where the term ``stationary" has the conventional meaning if the arrival process is Poisson, and a slightly broadened meaning if the arrival process is renewal; the details follow in Section~\ref{sec:stry_dn}.  By $Q_\infty$ we denote a random variable such that $\pr \{Q_\infty  = k\} = \pi_k$, $k \in \mathbb{Z}_+$. Note that the (``stationary") distribution of the queue-length process does not depend on the particular interpretation of FIFO- or LIFO-batch departures.

By $E(\mu)$ we denote the generic service time of a customer (distributed exponentially with parameter $\mu$). More generally, $E(k\mu)$ stands for a random variable that is distributed exponentially with parameter $k \mu$, $k \geq 1$.

The generic inter-arrival time is denoted by $A$, its i.i.d.\ copies --- by  $A_1, A_2, \ldots$, and $S_n: = \sum_{i=1}^n A_i$, $n \geq 0$. We also use the normalized version of the inter-arrival time $\wtil A:= A/ \ex A$. Respectively, $\wtil A_1, \wtil A_2, \ldots$ stand for i.i.d.\ copies of $\wtil A$, and $\wtil S_n := \sum_{i=1}^n \wtil A_i$.

\begin{comment}
\begin{remark}{\em 
If the arrival process is Poisson, then the equivalence of FIFO- and LIFO-batch departures is straightforward because under both disciplines the queue-length process is a  continuous-time Markov chain on $\mathbb{Z}_+$ with transition rates
\begin{align} \label{eq:ctmc_rates}
\begin{split}
q(i, i+1) &= \lambda, \quad i \geq 0, \\
q(i,j) &= \mu, \quad i \geq 1, \  0 \leq j \leq i-1.
\end{split}
\end{align}}
\end{remark}
\end{comment}

\subsection{Busy period}
A {\it busy period} is a period of time between an arrival into the empty system and the first instant since this arrival when the system becomes empty again. Intuitively, a larger arrival rate should result in a longer busy period. Remarkably, in the model under study, the busy period distribution does not depend on the inter-arrival distribution (and in particular, it does not depend on the arrival rate).

\begin{lemma} \label{lem:busy_period}
Both under FIFO- and LIFO-batch departures, the busy period of the $G/M/\infty$ queue is distributed exponentially with parameter $\mu$.
\end{lemma}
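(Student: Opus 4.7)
My plan is to exploit the FIFO/LIFO equivalence established in Section~\ref{sec:coupling} and carry out the argument under LIFO-batch departures, where the busy period collapses to a single exponential service time. Concretely, I would fix a busy period and label the customers $1, 2, 3, \ldots$ in order of arrival (so customer $1$ is the one whose arrival initiates the busy period). The key structural observation under LIFO-batch departures is that customer $1$ has nobody in front of him throughout the busy period: whenever a later customer $k \geq 2$ completes service, customer $k$ and everyone behind him depart, but customer $1$ is unaffected. Hence customer $1$ can only leave the system through his own service completion.

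Next I would observe that the converse also holds: at the moment customer $1$'s service completes, he leaves together with the entire LIFO-batch consisting of all customers currently in the system (all of whom arrived after him, and hence sit behind him). Consequently the system empties exactly at the instant of customer $1$'s service completion, and the busy period coincides with his service time. Since service times are i.i.d.\ $E(\mu)$ and independent from the arrival process, the busy period is distributed as $E(\mu)$, and in particular its law does not involve the inter-arrival distribution $A$ or the arrival rate $\lambda$.

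Finally, to obtain the statement for FIFO-batch departures as well, I would invoke the coupling argument of Section~\ref{sec:coupling}: the queue-length process in $\mathbf{D}$ has the same law under the two disciplines, and the busy period is a functional of this sample path (namely the first hitting time of $0$ after the initial arrival), so the exponentiality carries over. The main obstacle is essentially pedagogical rather than technical, that is, persuading the reader that working under LIFO is legitimate; once this is granted, the proof reduces to the one-line observation about customer $1$. No delicate estimates, memorylessness manipulations, or generating-function computations are needed, which is precisely what makes the insensitivity of the busy period distribution with respect to $\lambda$ transparent in the LIFO picture, while appearing counterintuitive in the original FIFO formulation.
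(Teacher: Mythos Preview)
Your proposal is correct and matches the paper's main argument almost verbatim: the paper, too, passes to LIFO-batch departures via the coupling of Section~\ref{sec:coupling} and observes that the initiating customer leaves precisely when his own service time expires, at which point the queue empties. The only difference is that the paper additionally includes, for completeness, a short direct analytical proof under FIFO (using memorylessness to identify $B = S_{N-1} + E_{N-1}(\mu)$ with $E_0(\mu)$), which you do not give; but your LIFO argument is exactly what the paper presents as the ``more elegant proof.''
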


\begin{proof}
Under FIFO-batch departures, the result seems counter-intuitive but it does admit an analytical proof, which we provide here for completeness. Denote by $E_0(\mu)$ the service time of the customer that initiates the busy period, by $E_1(\mu), E_2(\mu),$ $\ldots$ the service times of the newly arriving customers, and by $A_1, A_2, \ldots$ their inter-arrival times. The queue becomes empty if the last customer in the queue finishes his service before the next customer arrives. Hence, the busy period lasts for
$
B = S_{N-1} + E_{N-1}(\mu),
$
where
$
N := \min \{ n \geq 1 \colon E_{n-1}(\mu) \leq A_n \}.
$
But the memoryless property of the exponential distribution implies that
$
N \dcopy \min \{ n \geq 1 \colon E_0(\mu) \leq S_n \},
$
and hence $B \dcopy E_0(\mu)$.

However, the coupling of FIFO- and LIFO-batch departures discussed in Section~\ref{sec:coupling} allows for a more elegant proof. It ensures that the busy period distribution is the same under either discipline. Under LIFO-batch departures, in contrast to the analytical proof above,  the lemma  follows directly from the way customers interact. Indeed, a~customer leaves as soon as his own service time expires or the service time of someone in front of him in the queue. Hence, the customer who initiates the busy period leaves exactly when his own service time expires. But if he leaves, the queue empties. That is, the busy period lasts for exactly the service time of the initial customer. \end{proof}

\begin{comment}
{\color{blue} present the above as a proof or just text?}

{\color{blue}
LEAVE OR REMOVE?
\begin{remark} {\em The representation with LIFO-batch departures implies also that, for any initial state $n>0$, the time it takes the Markov process \eqref{eq:ctmc_rates} to reach the empty state has an exponential distribution with mean $1/\mu$.}
\end{remark}

\begin{remark}{\em 
For a Markov process with downward transitions
\[
q(i,j) = \mu, \quad i \geq 1, \quad 0 \leq j \leq i-1,
\]
only and no upward transitions,  and for any initial state $n > 0$, the time until absorption in the empty state has exponential distribution with mean $1/\mu$. It follows by the representation with FIFO-batch departures: the system remains non-empty as long as the last of the initial customers is present, and that is for the duration of his service time.}
\end{remark}
}
\end{comment}

\subsection{Stationary queue-length distribution}  \label{sec:stry_dn}
In this paper, by the {\it stationary queue-length distribution} we mean the long-run fractions of time the process $Q(\cdot)$ spends in particular states, i.e.\ the limits
\begin{equation} \label{eq:def_stat_dn}
\pr\{Q_\infty = k \} = \pi_k := \text{a.s.-}\lim_{t \to \infty} \dfrac{\int_0^t \ind \{ Q(s) = k \} ds}{t}, \quad k \in \mathbb{Z}_+.
\end{equation}

If the arrival process is Poisson, then $Q(\cdot)$ is a continuous-time Markov chain and the distribution~\eqref{eq:def_stat_dn} is stationary in the conventional sense: if $Q(0) \dcopy Q_\infty$, then $Q(t) \dcopy Q_\infty$ for all $t > 0$.

\begin{lemma} \label{lem:st_dn}  Both under FIFO- and LIFO-batch departures, the limits~\eqref{eq:def_stat_dn} exist and are given by 
\begin{equation} \label{eq:fla_stat_dn}
\sum_{i \geq k} \pi_i =  \pr \{Q_\infty \geq k \} =  \dfrac{1 - \ex e^{-k \wtil A/\rho}}{ k / \rho}\prod_{i=1}^{k-1} \ex e^{-i \wtil A / \rho}, \quad k \geq 1.
\end{equation}
In addition, 
\begin{equation} \label{eq:stat_dn_is_lim}
Q(t) \Rightarrow Q_\infty \quad \text{as $t \to \infty$}
\end{equation}
if and only if the inter-arrival time distribution is non-lattice {\em (}i.e., not concentrated on a set of the form $\{\delta, 2 \delta, \ldots \}${\em )}.%the ``stationary" distribution is also the limit distribution of the queue-length process,

\end{lemma}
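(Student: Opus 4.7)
My plan is to work in the LIFO-batch interpretation (equivalent to FIFO by the coupling of Section~\ref{sec:coupling}) and pass through the embedded Markov chain $X_n:=Q(S_n-)$ at the arrival epochs. The first step is to write the transition kernel $P_t$ of the pure downward chain \eqref{eq:rates_down} in closed form. The LIFO viewpoint makes this transparent: starting from $k$ customers, the one at position $j$ remains present at time $t$ iff all of the $j$ oldest service times $E_1,\dots,E_j$ still exceed $t$, so the state at time $t$ equals the largest $j\le k$ with $E_1,\dots,E_j>t$. This yields at once $\pr_k\{Q(t)\ge j\}=e^{-j\mu t}$ for $j\le k$, whence $P_t(k,k)=e^{-k\mu t}$ and $P_t(k,j)=e^{-j\mu t}(1-e^{-\mu t})$ for $j<k$.

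Next, the embedded chain $X_n$ has transition $\pr\{X_{n+1}=j\mid X_n=k\}=\ex P_A(k+1,j)$. Writing $\phi_j:=\ex e^{-j\mu A}$, the preceding formula gives $\pr\{X_{n+1}\ge j\mid X_n=k\}=\phi_j$ for $j\le k+1$, and in particular $\pr\{X_{n+1}=0\mid X_n=k\}\ge 1-\phi_1>0$ uniformly in $k$. Hence $X_n$ is positive recurrent with a unique stationary distribution $\pi^*$, whose tail obeys the one-step recursion $\Pi_j^*=\phi_j\Pi_{j-1}^*$ and therefore admits the closed form $\Pi_j^*=\prod_{i=1}^j\phi_i$.

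The a.s.\ limits in \eqref{eq:def_stat_dn} and the formula \eqref{eq:fla_stat_dn} then follow from the SLLN for regenerative processes applied to the inter-arrival cycles: $\pi_k$ equals $\lambda$ times the $\pi^*$-average of $\ex\int_0^A P_u(X_n+1,k)\,du$, and the required integrals are immediate from the first step. Splitting the contribution of $X_n=k-1$ (cycle starts at level $k$) from that of $X_n\ge k$ (cycle passes through $k$) and substituting the explicit $\pi_{k-1}^*=(1-\phi_k)\prod_{i=1}^{k-1}\phi_i$ and $\Pi_k^*=\prod_{i=1}^{k}\phi_i$ expresses $\pi_k$ as a telescoping difference whose tail sum from $k$ onwards collapses to $\frac{\rho(1-\phi_k)}{k}\prod_{i=1}^{k-1}\phi_i$, i.e.\ \eqref{eq:fla_stat_dn}. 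The main obstacle here is the somewhat delicate bookkeeping that collapses the renewal-reward identity into the telescoped form; everything else is a matter of substitution.

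For \eqref{eq:stat_dn_is_lim}, I would regenerate at the arrival epochs into the empty queue. By Lemma~\ref{lem:busy_period} the cycle length is $T=E(\mu)+I$, where $I$ is the overshoot of the renewal arrival process past the (exponentially distributed) busy period, and plainly $\ex T<\infty$. Since a cycle must terminate at an arrival instant, the support of $T$ is contained in $\delta\mathbb{Z}_+$ iff the support of $A$ is; the classical regenerative-process theorem (Smith's theorem) then delivers $Q(t)\Rightarrow Q_\infty$ in the non-lattice case. For the converse, when $A\in\delta\mathbb{Z}_+$ the upward jumps of $Q$ occur only at times in $\delta\mathbb{Z}_+$, which makes the distribution of $Q(t)$ depend non-trivially on $t\bmod\delta$ and so precludes any weak limit.
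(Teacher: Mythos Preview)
Your argument is correct and takes a genuinely different route from the paper's. The paper works with the regenerative structure at busy-period initiations and a \emph{level decomposition}: under LIFO, a level-$k$ customer (one arriving into a queue of size $k-1$) has sojourn time $E(k\mu)$, nested inside a level-$(k-1)$ sojourn; the formula~\eqref{eq:fla_stat_dn} is then built up multiplicatively from the ratios $\pr\{Q_\infty\ge k\}/\pr\{Q_\infty\ge k-1\}=\ex N_k\,\ex E(k\mu)/\ex E((k-1)\mu)$, where $N_k$ counts level-$k$ arrivals during a level-$(k-1)$ sojourn and is computed via a separate geometric argument. You instead go straight to the embedded chain $X_n=Q(S_n-)$, and the key observation that $\pr_k\{Q(t)\ge j\}=e^{-j\mu t}$ (immediate from LIFO) gives you both the embedded stationary tail $\Pi_j^*=\prod_{i=1}^j\phi_i$ via the one-line recursion $\Pi_j^*=\phi_j\Pi_{j-1}^*$ and the within-cycle occupation integrals in closed form; the semi-regenerative identity then telescopes cleanly to~\eqref{eq:fla_stat_dn}. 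Your route is more computational but shorter and avoids the somewhat intricate calculation of $\ex N_k$; the paper's level picture, on the other hand, explains \emph{probabilistically} why each factor $\phi_i$ appears (it is the probability that a level-$i$ sojourn survives one inter-arrival). For the lattice/non-lattice dichotomy the two arguments are essentially the same; your necessity sketch is slightly informal---the paper makes it concrete by comparing $\pr\{Q(n\delta)=0\}$ with $\pr\{Q(n\delta+\eps)=0\}$---but the idea is right.
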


Here we present the core ideas of the proof, and some further details follow in Section~\ref{sec:proof_stationary_analysis}. First, note that the queue-length process $Q(\cdot)$ is regenerative with the initiation instants of busy periods as regeneration points. Respectively, a cycle is a period of time between the initiation instants of two successive busy periods. In particular, if the inter-arrival time distribution is non-lattice, then the cycle distribution is non-lattice, too,  and then the limit queue-length distribution exists (see e.g.~\cite[Theorem VI.1.2]{Asmussen}) and~\eqref{eq:stat_dn_is_lim} necessarily holds.

Now we discuss why the limits~\eqref{eq:def_stat_dn} exist and satisfy the formula~\eqref{eq:fla_stat_dn}. The idea behind~\eqref{eq:fla_stat_dn} for $k=1$ is that the queue is non-empty when it is in busy periods, which are distributed exponentially with parameter $\mu$ and are parts of i.i.d.\ cycles. Denote by $C$ the generic cycle length. Then the law of large numbers implies that
\[
\sum_{i \geq 1} \pi_i = \text{a.s.-}\lim_{t \to \infty} \dfrac{\int_0^t \ind \{ Q(s) \geq 1 \} ds}{t} = \frac{\ex E(\mu)}{\ex C}.
\]

To derive~\eqref{eq:fla_stat_dn} for $k \geq 2$, we assume LIFO-batch departures and classify customers into levels: a customer is of level $k \geq 1$ if he arrives into the system with $k-1$ other customers. The point of  this classification is that the queue size is at least $k$ during the sojourn times of level $k$ customers. Note that the sojourn time (from arrival until departure) of a level $k$ customer is distributed exponentially with parameter $k\mu$ since it is the minimum of his own service time and the service times of the $k-1$ customers in front of him.  Also note that the sojourn times of different level $k$ customers do not overlap and are i.i.d., and that each level $k$ sojourn time is within a level $k-1$ sojourn time. Denote by $N_k$ the number of level $k$ customers that arrive during the sojourn time of a level $k-1$ customer. Then, by Wald's identity and the law of large numbers,
\[
\text{a.s.-}\lim_{t \to \infty} \frac{\int_0^t \ind \{ Q(s) \geq k \} ds}{\int_0^t \ind \{ Q(s) \geq k-1 \} ds} = \frac{\ex N_k \ex E(k\mu)}{\ex E((k-1)\mu)}, \quad k \geq 2,
\]
and hence,
\begin{equation} \label{eq:prelimiary_fla_stry_dn}
\text{a.s.-}\lim_{t \to \infty} \frac{\int_0^t \ind \{ Q(s) \geq k \} ds}{t} =  \frac{\ex E(k\mu)}{\ex C} \prod_{i=2}^k \ex N_i, \quad k \geq 1.
\end{equation}
It is left to compute the $\ex C$ and $\ex N_i$'s to get~\eqref{eq:fla_stat_dn}, which is postponed until Section~\ref{sec:proof_stationary_analysis}.

\subsection{Moments of the stationary queue length} With the additional assumption of Poisson arrivals, it is possible to derive a recursive relation for the factorial moments of the stationary queue-length, which are defined by
\[
m_0 := 1, \quad m_n := \mathbb{E} \prod_{i=0}^{n-1} (Q_\infty-i), \quad n \geq 1.
\]

\begin{lemma} \label{lem:st_dn_moments}
Under Poisson arrivals,
\begin{equation} \label{eq:moments_recursion}
m_{n+2} = (n+2)(\rho m_n - m_{n+1}), \quad n \geq 0.
\end{equation}
\end{lemma}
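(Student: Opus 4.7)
Under Poisson arrivals the queue-length process $Q(\cdot)$ is a continuous-time Markov chain whose generator acts on functions $f \colon \mathbb{Z}_+ \to \mathbb{R}$ as
\[
\mathcal{L} f(i) \;=\; \lambda \bigl[f(i+1) - f(i)\bigr] \;+\; \mu \sum_{j=0}^{i-1} \bigl[f(j) - f(i)\bigr].
\]
The plan is to exploit the global-balance identity $\mathbb{E}[\mathcal{L} f(Q_\infty)] = 0$ with $f$ chosen as a falling factorial. Concretely, take $f(i) = (i)_{n+1} := i(i-1) \cdots (i-n)$, so that $\mathbb{E} f(Q_\infty) = m_{n+1}$, and derive the recursion by collecting terms. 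The main obstacle is bookkeeping the combinatorial identities and verifying finiteness of all factorial moments; no probabilistic subtlety arises beyond that.

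For the algebra I would rely on three standard falling-factorial identities:
\[
(i+1)_{n+1} - (i)_{n+1} = (n+1)(i)_n, \qquad \sum_{j=0}^{i-1}(j)_{n+1} = \frac{(i)_{n+2}}{n+2}, \qquad i\,(i)_{n+1} = (i)_{n+2} + (n+1)(i)_{n+1}.
\]
The first two are classical; the third follows from $(i)_{n+2} = (i)_{n+1}(i-n-1)$. Substituting into the generator,
\[
\mathcal{L} f(i) = \lambda (n+1) (i)_n + \mu\left[\frac{(i)_{n+2}}{n+2} - i(i)_{n+1}\right] = \lambda(n+1)(i)_n - \mu(n+1)\left[\frac{(i)_{n+2}}{n+2} + (i)_{n+1}\right].
\]
Taking expectations, using $\mathbb{E}[\mathcal{L} f(Q_\infty)] = 0$, dividing by $(n+1)\mu$, and rearranging,
\[
\rho\, m_n = \frac{m_{n+2}}{n+2} + m_{n+1},
\]
which is exactly \eqref{eq:moments_recursion}. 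A sanity check with $n = 0$ reproduces $m_2 = 2(\rho - m_1)$, consistent with the direct Little-type relation for the CTMC.

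The one point that needs care is the justification of $\mathbb{E}[\mathcal{L} f(Q_\infty)] = 0$ for the unbounded test function $f$. I would argue this inductively: assuming $m_{n+1} < \infty$ (true for $n = 0$ and a by-product of the recursion at the previous step), write the global balance equations for a truncated state space $\{0, 1, \dots, M\}$, multiply by $(k)_{n+1}$, sum and let $M \to \infty$. The boundary term vanishes because the rates $q(i,j) = \mu$ for $j < i$ are bounded (so the net $M$-boundary contribution is controlled by $\mu \cdot M^{n+1} \pi_{>M}$), and the tail $\pi_{>M}$ decays super-geometrically by the factor $\prod_{i=1}^{k-1} \mathbb{E} e^{-i\widetilde A/\rho}$ in \eqref{eq:fla_stat_dn}, which guarantees $m_{n+2} < \infty$ as soon as $m_n, m_{n+1} < \infty$. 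Once this truncation step is in hand, the recursion follows with no further work.
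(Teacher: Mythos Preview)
Your argument is correct and complete. The route differs from the paper's: there the authors work with the probability generating function $\pi(z)=\sum_k \pi_k z^k$, derive from the balance equations a functional identity $\rho\,\pi(z)=r'(z)$ for an auxiliary function $r(z)=\sum_i \pi_i\sum_{k=0}^i z^k$, and then Taylor-expand both sides about $z=1$ to read off the factorial-moment recursion from the coefficients. Your approach instead applies the stationary-generator identity $\mathbb{E}[\mathcal{L}f(Q_\infty)]=0$ directly with the falling-factorial test function $f(i)=(i)_{n+1}$. The two computations are dual (the Taylor coefficients of $\pi$ at $z=1$ \emph{are} the factorial moments), but yours is more direct: it avoids the auxiliary $r(z)$ and delivers the recursion in one line once the three falling-factorial identities are in hand. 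You also make explicit the finiteness of all moments via the super-geometric tail in~\eqref{eq:fla_stat_dn}, a point the paper leaves implicit when asserting that the Taylor expansions are valid.
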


The base for the above recursion --- the mean $m_1 = \ex Q_\infty$ --- admits no closed-form expression but can be evaluated in a certain limit regime, see Section~\ref{sec:stry_asymptotics}.

As for the proof of~\eqref{eq:moments_recursion},  it follows as one incorporates the balance equations for the Markov (due to Poisson arrivals) process $Q(\cdot)$ into the generating function of its stationary distribution $\pi_k$, $k \in \zplus$, and applies a Taylor expansion around 1. The detailed derivation follows in Section~\ref{sec:proof_stationary_analysis}.

\subsection{Asymptotics of the stationary queue length} \label{sec:stry_asymptotics} The final result  of this section is that, for large $\rho$, i.e.\ in presence of service delays, the stationary queue length is of order $\sqrt{\rho}$. This asymptotic behavior motivates the scaling limit of the queue length process that we develop in the next section.

\begin{lemma} \label{th:st_dn_FL}
As $\rho \to \infty$,
$
Q_\infty/ \sqrt{\rho} \Rightarrow \zeta,
$
where the limit random variable $\zeta$ has the following Weibull {\em (}Rayleigh{\em )} distribution:
\[
\pr \{ \zeta \geq x \} = e^{-\frac{x^2}{2}}, \quad x > 0.
\]
Moreover,
$
\ex Q_\infty / \sqrt{\rho} \to \ex \zeta  = \sqrt{\pi/2}.
$
\end{lemma}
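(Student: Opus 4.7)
The plan is to derive the Rayleigh limit directly from the closed-form tail formula in Lemma~\ref{lem:st_dn}. Fixing $x > 0$ and setting $k = k(\rho, x) := \lceil x\sqrt{\rho}\,\rceil$, I will work with
\[
\pr\{Q_\infty \geq k\} \;=\; \frac{1 - \phi(k/\rho)}{k/\rho} \cdot \prod_{i=1}^{k-1} \phi(i/\rho),
\]
where $\phi(s) := \ex e^{-s\wtil A}$ is the Laplace transform of the normalized inter-arrival time, satisfying $\phi(0) = 1$, $\phi'(0) = -\ex\wtil A = -1$, and the expansion $\phi(s) = 1 - s + o(s)$ as $s \to 0$.

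First I would handle the prefactor: since $k/\rho \to 0$, the ratio $(1-\phi(k/\rho))/(k/\rho) \to -\phi'(0) = 1$. Second, taking logs in the product yields
\[
\sum_{i=1}^{k-1}\log\phi(i/\rho) \;=\; -\frac{(k-1)k}{2\rho} + \sum_{i=1}^{k-1}\eta(i/\rho),
\]
where $\eta(s) := \log\phi(s) + s = o(s)$ as $s \to 0$. The dominant term converges to $-x^2/2$; for the remainder, the bound $|\eta(s)| \leq \eps s$ on $[0,\delta(\eps)]$ gives $|\sum_{i=1}^{k-1}\eta(i/\rho)| \leq \eps(k-1)k/(2\rho)$ once $k/\rho \leq \delta$, and since $\eps > 0$ is arbitrary, this remainder vanishes. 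Combining, $\pr\{Q_\infty \geq \lceil x\sqrt\rho\,\rceil\} \to e^{-x^2/2}$, which is the survival function of $\zeta$, hence $Q_\infty/\sqrt{\rho} \Rightarrow \zeta$.

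For the convergence of the first moment, I would upgrade this pointwise convergence to a uniform Gaussian-type tail bound. Using the elementary inequality $1 - e^{-u} \geq u - u^2/2$ for $u \geq 0$ gives $\phi(s) \leq 1 - s + \tfrac{1}{2}s^2 \ex\wtil A^2$ (assuming a finite second moment of the inter-arrival distribution), hence $\log\phi(s) \leq -s + \tfrac{1}{2}s^2 \ex\wtil A^2$. Combined with the Jensen bound $(1-\phi(k/\rho))/(k/\rho) \leq 1$, this should deliver $\pr\{Q_\infty/\sqrt\rho \geq x\} \leq C e^{-cx^2}$ uniformly in $\rho$, for some $C,c > 0$. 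Uniform integrability of $\{Q_\infty/\sqrt\rho\}$ then follows, and combining it with the weak convergence via the layer-cake formula
\[
\frac{\ex Q_\infty}{\sqrt\rho} \;=\; \int_0^\infty \pr\{Q_\infty/\sqrt\rho > x\}\,dx
\]
and dominated convergence yields $\ex Q_\infty/\sqrt\rho \to \int_0^\infty e^{-x^2/2}\,dx = \sqrt{\pi/2} = \ex\zeta$.

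The main technical obstacle will be arranging this uniform tail bound: the Taylor estimate for $\log\phi$ is sharp only for small arguments, so for $i/\rho$ of order $1$ (which arises when $x$ is allowed to grow with $\sqrt\rho$) I would need a separate argument, using $\phi(s) \leq \phi(\delta) < 1$ for $s \geq \delta$ to obtain geometric decay of the product in $k$ and then patching the two regimes together. The weak-convergence part by contrast requires only the first-moment condition $\ex\wtil A = 1$, which holds automatically.
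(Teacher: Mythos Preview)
Your proposal is correct and takes a different, more elementary route than the paper. For the weak convergence, you work directly with the Laplace transform $\phi(s)=\ex e^{-s\wtil A}$ via $\log\phi(s)=-s+o(s)$ and a Riemann-sum estimate; the paper instead rewrites $\prod_{i=1}^{k-1}\phi(i/\rho)=\ex\exp\bigl(-\rho^{-1}\sum_{i=1}^{k-1}\wtil S_i\bigr)$ through the swap identity (Proposition~\ref{prop:swap}) and then appeals to the law-of-large-numbers result $\sum_{i=1}^{n}\wtil S_i/n^2\to 1/2$ (Proposition~\ref{prop:lln}) together with dominated convergence. Your argument avoids these auxiliary probabilistic propositions; the paper's representation, on the other hand, is reused verbatim later in the fluid-limit proof (Lemma~\ref{lem:FL_add_delta}), so it pays for itself there. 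For the mean, the paper splits $\sum_{k\ge 1}\pr\{Q_\infty\ge k\}$ at $k=T\sqrt{\rho}$, handles the tail by grouping into blocks of length $\lfloor T\sqrt{\rho}\rfloor$ and bounding geometrically in the same $\wtil S_i$-representation, and finally sends $T\to\infty$; your plan of a uniform Gaussian-type tail bound plus dominated convergence on the layer-cake integral is an equally valid alternative. One remark: the finite-second-moment assumption on $A$ that you introduce is not needed. The patching you already anticipate---$\log\phi(s)\le -(1-\eps)s$ for $s\le\delta$ from the first-order expansion, combined with $\phi(s)\le\phi(\delta)<1$ for $s\ge\delta$---already delivers the required uniform bound under just $\ex\wtil A=1$, which is the only moment hypothesis the paper uses.
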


The proof of this lemma is provided in Section~\ref{sec:proof_stationary_analysis}. We asymptotically characterize the tail  $\pr \{  Q_\infty / \sqrt{\rho} \geq x\}$ and the mean $\ex Q_\infty = \sum_{k \geq 1} \pr \{ Q_\infty \geq k \}$ based on the formula~\eqref{eq:fla_stat_dn} and the dominated convergence theorem.

\begin{comment}
Note that the asymptotics for the mean implies the asymptotics for the higher moments by Lemma~\ref{lem:st_dn_moments}.
\end{comment}

\begin{corollary}
Under Poisson arrivals,
$
\ex \left( Q_\infty/ \sqrt{\rho} \right)^n \to \ex \zeta^n  = n!! (\sqrt{\pi/2})^{n \bmod 2}
$, $n \geq 1$,
where the double factorial is defined by $n!! := \prod_{\begin{subarray}{l}
1 \leq l \leq n, \\
 l \equiv n \, (\textup{mod} \, 2)
 \end{subarray}}  l$.
\end{corollary}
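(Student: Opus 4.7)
The approach is to combine the factorial-moment recursion of Lemma~\ref{lem:st_dn_moments} with the base case $m_1/\sqrt{\rho} \to \sqrt{\pi/2}$ from Lemma~\ref{th:st_dn_FL}, first deducing convergence of the rescaled factorial moments $m_n/\rho^{n/2}$, and then converting to ordinary power moments via Stirling numbers of the second kind.

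First, I would prove by strong induction on $n$ that $\tilde m_n := \lim_{\rho \to \infty} m_n/\rho^{n/2}$ exists and is finite, with bases $\tilde m_0 = 1$ and $\tilde m_1 = \sqrt{\pi/2}$. For the inductive step, dividing \eqref{eq:moments_recursion} by $\rho^{(n+2)/2}$ yields
\[
\frac{m_{n+2}}{\rho^{(n+2)/2}} \;=\; (n+2)\,\frac{m_n}{\rho^{n/2}} \;-\; \frac{n+2}{\sqrt{\rho}}\cdot\frac{m_{n+1}}{\rho^{(n+1)/2}}.
\]
By the induction hypothesis, the first term converges to $(n+2)\tilde m_n$, while the second vanishes because $m_{n+1}/\rho^{(n+1)/2}$ is bounded (also by the hypothesis) and the prefactor $1/\sqrt{\rho}$ tends to zero. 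This yields the limit recursion $\tilde m_{n+2} = (n+2)\,\tilde m_n$, which decouples the even and odd subsequences in the limit.

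Next, I would identify $\tilde m_n$ with the Rayleigh moment $\E \zeta^n$. A direct integration using $u = x^2/2$ gives $\E \zeta^n = 2^{n/2}\Gamma(n/2+1)$, and the same two-term recursion holds by $\Gamma(n/2+2) = (n/2+1)\Gamma(n/2+1)$, with matching base cases. Separating parity and using $\Gamma(k+1)=k!$ and $\Gamma(k+1/2)=(2k-1)!!\sqrt{\pi}/2^k$ recovers the closed form $n!!\,(\sqrt{\pi/2})^{n\bmod 2}$.

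Finally, to translate from factorial to ordinary moments I would use the Stirling-number expansion $Q_\infty^n = \sum_{k=1}^n S(n,k)\,Q_\infty(Q_\infty-1)\cdots(Q_\infty-k+1)$, so that
\[
\E\bigl(Q_\infty/\sqrt{\rho}\bigr)^n \;=\; \sum_{k=1}^n S(n,k)\,\rho^{(k-n)/2}\,\frac{m_k}{\rho^{k/2}}.
\]
The $k=n$ summand converges to $\tilde m_n = \E \zeta^n$, while each $k<n$ term vanishes since the factor $\rho^{(k-n)/2}\to 0$ absorbs the bounded factor $m_k/\rho^{k/2}$. The main delicacy is bookkeeping in the induction: because the recursion relates $m_{n+2}$ to \emph{both} $m_n$ and $m_{n+1}$, the strong-induction hypothesis must carry every earlier moment rather than only those of matching parity; this is the sole reason both parity chains must be advanced in lockstep.
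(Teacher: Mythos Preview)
Your proof is correct and follows essentially the same approach as the paper, which simply states that the corollary follows from Lemma~\ref{th:st_dn_FL} and the recursion~\eqref{eq:moments_recursion}; you have filled in the natural details of that argument. The Stirling-number step converting factorial to power moments is a clean way to finish, though one could equally note directly that $m_n$ and $\E Q_\infty^n$ differ by a polynomial in $Q_\infty$ of degree $n-1$ whose rescaled expectation vanishes by the already-established bounds.
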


Lemma~\ref{th:st_dn_FL} and the recursion~\eqref{eq:moments_recursion} imply the last corollary of this section.

\section{Transient analysis} \label{sec:transient_analysis}
In this section, we characterize the evolution of the $G/M/\infty$ queue with FIFO/LIFO-batch departures in time. 

\subsection{Direct transient analysis} Under the additional assumption of Poisson arrivals, it is possible to find the queue-length distribution at a fixed time instant by solving the Kolmogorov equations.
\begin{lemma} \label{lem:tr_dn}
Under Poisson arrivals,
\begin{equation} \label{eq:kolm_sol}
\pr \{ Q(t)  = k \} = C_{k,0} + \sum_{i=1}^{k+1} C_{k,i} e^{-(\lmb+i\mu)t}, \quad t > 0,
\end{equation}
where the coefficients $C_{k,i}$ can be computed recursively as follows: for $k \geq 0$,
\begin{subequations} \label{eq:kolm_induction}
\begin{align}
C_{k,0} &=  \pr\{ Q_\infty = k \} = \pi_k, \label{eq:kolm_const} \\
(k+1-i)C_{k,i} &= \rho C_{k-1,i} - \sum_{l=i-1}^{k-1} C_{l,i}, \quad 1 \leq i \leq k, \label{eq:kolm:middle} \\
C_{k, k+1} &= \pr\{ Q(0) = k \} - \sum_{i=0}^k C_{k,i} \label{eq:kolm_last}.
\end{align}
\end{subequations}
\end{lemma}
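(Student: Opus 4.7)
Under Poisson arrivals $Q(\cdot)$ is a CTMC on $\mathbb{Z}_+$ with upward rates $q(i,i+1)=\lambda$ and downward rates $q(i,j)=\mu$ for $0\le j\le i-1$, so that the total exit rate from state $i$ is $\lambda+i\mu$. Writing $p_k(t):=\pr\{Q(t)=k\}$, the forward equations read
\begin{equation*}
p_k'(t) = -(\lambda + k\mu)\, p_k(t) + \lambda\, p_{k-1}(t) + \mu \sum_{l \geq k+1} p_l(t), \quad k \geq 1,
\end{equation*}
with the convention $p_{-1}\equiv 0$ and the obvious modification for $k=0$. The plan is to insert the proposed form~\eqref{eq:kolm_sol} into these equations and equate the coefficients of the distinct exponentials $1,\, e^{-(\lambda+\mu)t},\, e^{-(\lambda+2\mu)t},\ldots$ present on either side; the three identities~\eqref{eq:kolm_const}--\eqref{eq:kolm_last} will come out of that comparison in turn.

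\textbf{Matching coefficients.} The constant term of the resulting identity is
\begin{equation*}
0 = -(\lambda + k\mu)\, C_{k,0} + \lambda\, C_{k-1,0} + \mu \sum_{l \geq k+1} C_{l,0},
\end{equation*}
which is precisely the global balance equation for state $k$ in the stationary distribution $\pi$, yielding $C_{k,0}=\pi_k$ and hence~\eqref{eq:kolm_const}. For $1\le i\le k$, equating the coefficients of $e^{-(\lambda+i\mu)t}$ and cancelling a factor of $\mu$ gives
\begin{equation*}
(k-i)\, C_{k,i} = \rho\, C_{k-1,i} + \sum_{l \geq k+1} C_{l,i}.
\end{equation*}
The infinite tail on the right is dealt with via the normalization $\sum_l p_l(t)\equiv 1$. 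Since the exponential $e^{-(\lambda+i\mu)t}$ appears in $p_l(t)$ exactly for $l\ge i-1$, matching coefficients on the two sides of $\sum_l p_l(t)=1$ yields $\sum_{l\ge i-1} C_{l,i}=0$, so $\sum_{l\ge k+1} C_{l,i} = -\sum_{l=i-1}^{k} C_{l,i}$. Substituting and absorbing the $l=k$ term into the left-hand side turns the display above into the recursion~\eqref{eq:kolm:middle}.

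\textbf{Initial condition and verification.} The remaining coefficient $C_{k,k+1}$ is the free constant of the homogeneous part of the linear ODE and is determined by $p_k(0)=\pr\{Q(0)=k\}$, which upon evaluating~\eqref{eq:kolm_sol} at $t=0$ gives~\eqref{eq:kolm_last}. To confirm that the ansatz actually solves the system — coefficient matching alone only produces necessary conditions — I would proceed by induction on $k$. Using $\sum_{l\ge k+1} p_l(t) = 1 - \sum_{l=0}^{k-1}p_l(t) - p_k(t)$, the equation for $p_k$ rewrites as
\begin{equation*}
p_k'(t) + (\lambda + (k+1)\mu)\, p_k(t) = \mu + \lambda\, p_{k-1}(t) - \mu \sum_{l=0}^{k-1} p_l(t),
\end{equation*}
whose forcing is, by the induction hypothesis, a linear combination of $1$ and of the exponentials $e^{-(\lambda+i\mu)t}$ for $1\le i\le k$. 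Integrating against the factor $e^{(\lambda+(k+1)\mu)t}$ produces a particular solution of exactly the same form plus a homogeneous term $C_{k,k+1}\, e^{-(\lambda+(k+1)\mu)t}$, matching~\eqref{eq:kolm_sol}, and the constants so obtained coincide with those delivered by the coefficient-matching step. The main care point — and the step that I expect to require the most attention — is justifying the termwise manipulation of the infinite sum $\sum_{l\ge k+1} p_l(t)$ and the corresponding rearrangement $\sum_{l\ge i-1}C_{l,i}=0$; rewriting the tail via the normalization as above conveniently reduces everything to finite sums, which sidesteps any convergence subtlety.
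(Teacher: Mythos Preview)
Your approach is correct and essentially matches the paper's. The paper writes the forward equations, immediately rewrites the tail $\sum_{l\ge k+1}p_l(t)$ as $1-\sum_{l=0}^k p_l(t)$ to obtain a triangular system with \emph{finite} right-hand sides, applies the integrating factor $e^{(\lambda+(k+1)\mu)t}$ to show inductively that each $p_k$ has the form~\eqref{eq:kolm_sol}, and then reads off~\eqref{eq:kolm_const} from $t\to\infty$, \eqref{eq:kolm:middle} by plugging the ansatz back into the finite-sum system, and~\eqref{eq:kolm_last} from $t=0$.

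The only difference is organizational: you first match coefficients in the infinite-sum form (invoking $\sum_{l\ge i-1}C_{l,i}=0$ from normalization) and then propose the finite-sum induction as the rigorous verification, whereas the paper works with finite sums from the outset and never needs to manipulate infinite series of coefficients at all. Your ``verification'' paragraph is in fact the entire argument; the preceding infinite-sum heuristic is not needed, since matching coefficients of $e^{-(\lambda+i\mu)t}$ directly in $p_k'+(\lambda+(k+1)\mu)p_k=\mu-\mu\sum_{l=0}^{k-1}p_l+\lambda p_{k-1}$ yields $(k+1-i)C_{k,i}=\rho C_{k-1,i}-\sum_{l=i-1}^{k-1}C_{l,i}$ in one line.
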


The derivation of these formulae can be found in the appendix. Note that~\eqref{eq:kolm_const}  follows immediately since the stationary queue-length probabilities $\pi_k$ are also the limit queue-length probabilities as $t \to \infty$. As an aside, we mention that the above recursion allows the computation of $\pr\{ Q(t) \leq k \}$ in $O(k^3)$ operations.

\subsection{Fluid limit} While the last lemma gives the queue-length distribution at a fixed time instant, 
the next result characterizes the entire trajectory of the queue-length process $Q(\cdot)$. We treat $Q(\cdot)$ as a random element of the Skorokhod space $\mathbf{D}$ and develop a distributional approximation for it as $\rho \to \infty$.  Lemma~\ref{th:st_dn_FL} suggests that space should be scaled with $\sqrt{\rho}$ as $\rho \to \infty$, and the following observation motivates the time scaling. Suppose the queue starts empty. Then the number $Y_1$ of customers that accumulate in the queue before the first departure has the distribution
\[
\pr \{ Y_1 \geq k \} = \prod_{i=0}^{k-1} \pr \{ A < E(i \mu) \} = \prod_{i=1}^{k-1} \ex^{-i \wtil A / \rho}, \quad k \geq 1.
\]
In particular, by~\eqref{eq:fla_stat_dn} and Lemma~\ref{th:st_dn_FL}, as $\rho \to \infty$,
\[
\pr \left\{  \frac{Y_1}{\sqrt{\rho}} \geq x \right\} \sim \pr \left\{  \frac{Q_\infty}{\sqrt{\rho}} \geq x \right\} \to e^{-\frac{x^2}{2}}, \quad x > 0.
\]
That is, an empty queue builds up to a level of order $\sqrt{\rho}$ until the first departure. If time is renormalized so that that the arrival rate $\lambda$ is fixed and the service rate $\mu \to 0$, then the time of the first departure is of order $\sqrt{\rho}$ as well. Hence, we consider the family of the scaled processes
\begin{equation} \label{eq:scaling}
\lambda = \textup{const}, \quad \mu \to 0, \quad \overline{Q}^\rho(t) := \frac{Q(\sqrt{\rho}t)}{\sqrt{\rho}}, \quad t \geq 0.
\end{equation}
The above scaling is a law-of-large-numbers scaling (or: fluid scaling), since space and time are scaled with the same large factor. Limit processes that arise under such scalings are usually referred to as {\it fluid limits}. Typically, Markov processes have deterministic fluid limits, similarly to the deterministic limit in the law of large numbers. For the model under study, however, the fluid limit is {\it random}, as is stated in the next theorem.

\begin{theorem} \label{th:process_FL}
Suppose that $\overline{Q}^\rho(0) \Rightarrow \xi_0$ as $\rho \to \infty$, where the limit initial condition $\xi_0$ is either deterministic or absolutely continuous. Then the scaled processes $\overline{Q}^\rho(\cdot)$ converge weakly in the Skorokhod space $\mathbf{D}$ to a %stochastic process
random fluid limit $\overline{Q}(\cdot)$ that exhibits downward jumps of random size at random instants and grows linearly at rate $\lmb$ between the jumps.

For $n \geq 1$, denote by $\tau_n$ the instant of the $n$-th jump of $\overline{Q}(\cdot)$ and put $\tau_0:=0$. The  post- and pre-jump levels $\overline{Q}(\tau_{n-1})$ and $\overline{Q}(\tau_n-)$ form a Markov chain that starts from $\overline{Q}(\tau_0) = \overline{Q}(0) = \xi_0$ and evolves according to the following transition probabilities: for $n \geq 1$,
\begin{subequations} \label{eq:FL_MC}
\begin{align}
& \pr \{ \overline{Q}(\tau_n-) \geq y | \overline{Q}(\tau_{n-1}) = x\} = e^{\frac{-y^2+x^2}{2}}, \quad y > x,  \label{eq:12}\\
&\pr \{ \overline{Q}(\tau_n) \leq x | \overline{Q}(\tau_n-) = y\} = \frac{x}{y}, \quad 0 < x < y \label{eq:13}.
\end{align}
\end{subequations}
The above Markov  chain determines the jump instants and the entire trajectory of the fluid limit: for $n \geq 1$, 
\begin{align*}
&\tau_n - \tau_{n-1} =  \frac{1}{\lmb}\big(\overline{Q}(\tau_n-)-\overline{Q}(\tau_{n-1})\big), \\
&\overline{Q}(t) = \overline{Q}(\tau_{n-1}) + \lmb (t-\tau_n), \quad t \in [\tau_{n-1}, \tau_n).
\end{align*}

In addition, 
\begin{equation} \label{eq:25}
\tau_n \to \infty \quad \text{a.s.\ as $n \to \infty$}.
\end{equation}
\end{theorem}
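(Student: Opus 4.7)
The plan, matching the outline in the introduction, has three strands: identify a pre-limit embedded Markov chain whose convergence encodes \eqref{eq:FL_MC}; lift this to convergence of $\overline{Q}^\rho$ in $\mathbf{D}$; and deduce \eqref{eq:25}. Throughout I work under LIFO-batch departures, distributionally equivalent to FIFO-batch by Section~\ref{sec:coupling}, and I write $\mu = \lambda/\rho$. Call an inter-arrival interval \emph{marked} if it contains at least one departure; let $X^\rho_{n-1}$ be the scaled queue size at the start of the $n$-th marked interval (with $X^\rho_0 := \overline{Q}^\rho(0)$), and let $Y^\rho_n$ be the scaled queue size immediately before the first departure in that interval.

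\textbf{Step (i): the embedded chain.} Starting from $X^\rho_{n-1} = x$, the probability that at least $m$ arrivals precede the next departure equals $\prod_{i=0}^{m-1}\ex\, e^{-(\lfloor x\sqrt{\rho}\rfloor + i)\wtil A/\rho}$. Taking logarithms and expanding $\log\ex e^{-s\wtil A} = -s + O(s^2)$ at $s = (k+i)/\rho$ with $k = \lfloor x\sqrt{\rho}\rfloor$ and $m = \lfloor (y-x)\sqrt{\rho}\rfloor$ yields the sum $-(y^2-x^2)/2 + o(1)$, and dominated convergence then gives \eqref{eq:12}. For \eqref{eq:13}, conditional on a departure firing from a state near $y\sqrt{\rho}$ the post-departure state is uniform on $\{0,\ldots,\lfloor y\sqrt{\rho}\rfloor -1\}$ by \eqref{eq:rates_down}, whose rescaling tends to $\mathrm{Uniform}[0,y]$ with distribution function $x/y$. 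The technicality flagged in the introduction is that \emph{each fluid jump corresponds to exactly one departure in a single inter-arrival interval}: from a state of order $\sqrt{\rho}$, the probability of any departure during one interval is $O(1/\sqrt{\rho})$, so the probability of two or more is $O(1/\rho)$, and summing over the $O(\sqrt{\rho})$ marked intervals in any compact time window leaves this contribution negligible. Joint convergence of $(X^\rho_{n-1},Y^\rho_n)_{n \geq 1}$ to the chain \eqref{eq:FL_MC} then follows by induction on $n$, since conditional on $Y^\rho_n$ the post-departure state is independent of the past.

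\textbf{Step (ii): continuous-time lift.} The $n$-th jump instant of $\overline{Q}^\rho$ is $\tau^\rho_n = S_{K^\rho_n}/\sqrt{\rho}$, where $K^\rho_n$ counts the arrivals up to the first departure in the $n$-th marked interval. From Step~(i), $K^\rho_n/\sqrt{\rho}$ converges jointly with the embedded chain to $\sum_{k=1}^n(\overline{Q}(\tau_k-)-\overline{Q}(\tau_{k-1}))$, and combined with the renewal LLN $S_m/m \to 1/\lambda$ a.s.\ this gives $\tau^\rho_n \Rightarrow \tau_n = \lambda^{-1}\sum_{k=1}^n(\overline{Q}(\tau_k-)-\overline{Q}(\tau_{k-1}))$. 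Between consecutive jumps $\overline{Q}^\rho$ increases by $1/\sqrt{\rho}$ at each arrival, which is slope $\lambda$ in scaled time by the same LLN; this yields convergence of finite-dimensional distributions. For $J_1$-tightness I would verify the standard pair of conditions: compact containment, via the stationary bound of Lemma~\ref{th:st_dn_FL} and a monotone coupling to a copy started from a large deterministic state; and oscillation control, using that between marked intervals $\overline{Q}^\rho$ is monotone with slope close to $\lambda$, so that the oscillation on a short window is bounded by the (tight, by Step~(i)) number of jumps it contains. The hypothesis that $\xi_0$ is deterministic or absolutely continuous is what rules out fictitious jumps at time~$0$ and keeps the limit in $\mathbf{D}$.

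\textbf{Step (iii): $\tau_n \to \infty$ and the main obstacle.} Under \eqref{eq:FL_MC}, given $\overline{Q}(\tau_{n-1}) = x$ the increment $Y_n - x$ has survival function $\exp(-xz - z^2/2)$, and a short computation from \eqref{eq:13} together with Mill's inequality shows $\ex[\overline{Q}(\tau_n)\mid\overline{Q}(\tau_{n-1}) = x] \leq x/2 + c$ for some $c > 0$. Hence the embedded chain $(\overline{Q}(\tau_n))_{n\geq 0}$ is positive recurrent on $\nneg$ and admits a unique stationary law $\pi$. The Markov ergodic theorem then yields $n^{-1}\sum_{k=1}^n(\tau_k - \tau_{k-1}) \to \lambda^{-1}\ex_\pi[\ex(Y\mid X)-X]$, a strictly positive constant, and \eqref{eq:25} follows. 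The chief technical obstacle is the uniform ``at most one departure per marked interval'' estimate of Step~(i): a single-interval bound is immediate, but propagating it uniformly over all marked intervals in a compact time window — which is required both for the induction in Step~(i) and for the oscillation control in Step~(ii) — is the main source of bookkeeping in the argument.
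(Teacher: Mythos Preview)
Your proposal tracks the paper's proof closely: the same embedded chain (start/end of marked inter-arrival intervals), the same inductive derivation of \eqref{eq:12}--\eqref{eq:13}, and the same two-part lift (finite-dimensional distributions plus relative compactness). A few points where your execution diverges from the paper are worth noting.

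First, compact containment is much easier than you suggest: the paper simply bounds $\overline{Q}^\rho(T) \leq \overline{Q}^\rho(0) + \overline{A}^\rho(T) \Rightarrow \xi_0 + \lambda T$, so no stationary coupling is needed. Second, your ``summing over the $O(\sqrt{\rho})$ marked intervals'' argument for the single-departure property is awkward as stated (there are $O(\sqrt{\rho})$ intervals in total, only $O(1)$ marked) and, more importantly, presupposes an a~priori $O(\sqrt{\rho})$ state bound. The paper sidesteps this by working \emph{per index}: it shows inductively that each $Y_l/\sqrt{\rho}$ is tight, then proves $\pr\{E_i^1 \mid E_i\} \to 1$ uniformly for $i$ in a compact range of order $\sqrt{\rho}$, concluding $\pr\{D_l\} \to 1$ for each fixed $l$; non-explosiveness of the limit then controls how many indices $l$ are relevant on $[0,T]$. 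Third, your stated reason for the absolute-continuity hypothesis on $\xi_0$ is not the right one: the limit process is perfectly well-defined in $\mathbf{D}$ regardless. The hypothesis is used so that the limit vectors $(\xi_{l-1},\eta_l)_l$ have densities, which is what allows passing to the limit in expectations involving indicators (the $G(\rho)$ computations in the proofs of the Weibull-increment and uniform-cutoff lemmas). Finally, for \eqref{eq:25} the paper simply invokes a non-explosiveness criterion from the growth-collapse literature; your Foster--Lyapunov drift argument is a legitimate self-contained alternative, though on a continuous state space you would need to say a word about Harris recurrence before appealing to the ergodic theorem from an arbitrary initial state.
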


\begin{remark} \label{rem:nonexplosive}{\em 
The fluid limit $\overline{Q}(\cdot)$ in Theorem~\ref{th:process_FL} belongs to the class of growth-collapse/ stress release processes. In the related literature, the property~\eqref{eq:25} is referred to as {\it non-explosiveness}. In particular, the process $\overline{Q}(\cdot)$ is non-explosive by Last~\cite[Theorem 3.1]{Last}. }
\end{remark}

\begin{figure}[!b] 
\centering
\includegraphics[scale=0.75]{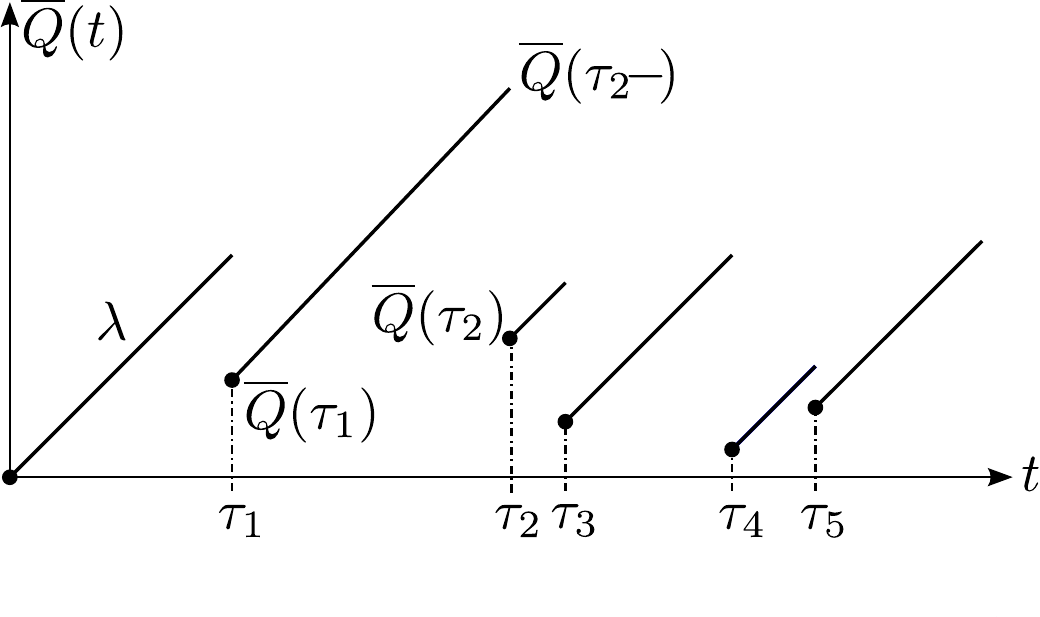}
\caption{A trajectory of the fluid limit.}
\label{fig:fl}
\end{figure}

The structure of the fluid limit aligns with the evolution of the pre-limit model as follows (see Figure~\ref{fig:fl} below and also Figure~\ref{fig:post_pre_departure_levels} in Section~\ref{sec:proof_FL}): the jump instants of the fluid limit and its post-/pre-jump levels are analogues of the departure instants and the post-/pre-departure queue levels. In fact, the former can be shown to arise as the scaling limits of the latter. Although this result is natural, it is tricky to prove directly. The renewal arrivals make the queue-length process non-Markov. In particular, the post- and pre-departure queue levels are not straightforward to analyze. Instead (see Section~\ref{sec:proof_FL} for the proof of Theorem~\ref{th:process_FL}), we  consider the queue levels in the beginning and at the end of those inter-arrival periods where departures occur. These queue levels do form a Markov chain that we show to converge weakly under the fluid scaling to the Markov chain formed by the post- and pre-jump fluid limit levels. Then we additionally prove that, with high probability, there is at most a single departure instant per inter-arrival period. That is, indeed, the post- and pre-jump fluid limit levels correspond one-to-one to the post- and pre-departure queue levels. 

The mentioned convergence of the embedded Markov chain  lays the foundation for the proof of the weak convergence $\overline{Q}^\rho(\cdot) \Rightarrow \overline{Q}(\cdot)$ in the Skorokhod space $\mathbf{D}$. We establish two  ingredients that imply weak convergence in $\mathbf{D}$:  convergence of finite-dimensional distributions and relative compactness of the pre-limit processes. To show the relative compactness,  we follow a conventional approach (see e.g.~Ethier and Kurtz \cite{EthierKurtz}) and verify that the pre-limit processes $\overline{Q}^\rho(\cdot)$ remain bounded on finite intervals and have small oscillations on small intervals.

\begin{remark} {\em 
A FIFO-/LIFO-batch departure is effectively a uniform cut-off, and this property is inherited by the fluid-limit: we have $\overline{Q}(\tau_n) = U_n \overline{Q}(\tau_n-)$, where $U_n$, $n \geq 1$, are i.i.d.\ random variables distributed uniformly on the interval $(0,1)$. The proof of Theorem~\ref{th:process_FL} in Section~\ref{sec:proof_FL} generalizes in a straightforward way to accommodate the following service discipline: whenever there is a service  completion, a number of customers still in service interrupt their service and leave together with the freshly served customer so that a generally distributed fraction $B$ of customers  remains in the system. The fluid limit of the $G/M/\infty$ under this new discipline  then satisfies $\overline{Q}(\tau_n) = B_n \overline{Q}(\tau_n-)$, where $B_n$, $n \geq 1$, are i.i.d.\ copies of $B$. }\end{remark}

\subsection{Stationary fluid limit} \label{sec:stationary_FL}
\begin{comment}

Usually, the fluid limit of a Markov process solves a deterministic differential equation that imitates the original stochastic dynamics. In addition, the stationary distribution of a Markov process usually converges under fluid scaling to the fixed point of the fluid limit differential equation, provided it is unique.

The queueing model under study does not fall into the above category, and in this section, we find out how its fluid limit $\overline{Q}(\cdot)$ and the limit stationary distribution $\zeta$ are related. Due  its structure, the fluid limit $\overline{Q}(\cdot)$ has no fixed point. Note that we do not check whether the fluid limit  $\zeta$ is a fixed point for the fluid limit in the weak sense, i.e.\ whether $\overline{Q}(0) \overset{\text{d}}{=} \zeta$ implies $\overline{Q}(t) \overset{\text{d}}{=} \zeta$ for all $t \geq 0$.

Instead, we adapt the notion of stationarity to the specific structure of the fluid limit.
\end{comment}

As mentioned before, the random fluid limit $\overline{Q}(\cdot)$  in Theorem~\ref{th:process_FL} belongs to the class of growth-collapse processes, which have been studied in the literature. In particular, there are results that characterize stationary and long-time limit behavior of such processes. Note that, in the context of growth-collapse processes like $\overline{Q}(\cdot)$, it is natural to consider the stationary distribution of the process itself and of the Markov chain of the post- and pre-jump levels, and to identify relations between the two stationary distributions. In the next lemma, we summarize these types of results for the fluid limit $\overline{Q}(\cdot)$. In addition to references, we also provide a simple alternative to the more general but also more involved derivation of the stationary post- and pre-jump distribution available in the literature.

 \begin{lemma} \label{lem:stationary_FL}
{\em (a)} The unique stationary distribution of the fluid limit $\overline{Q}(\cdot)$, which is also its limit distribution as $t \to \infty$, is that  of the limit random variable $\zeta$ from Lemma~$\ref{th:st_dn_FL}$.

{\em (b)}  The unique stationary distribution of the Markov chain $(\overline{Q}(\tau_{n-1}),\overline{Q}({\tau_n-}))$, $n \geq 0$, which is also its limit distribution as $n \to \infty$, is absolutely continuous. Let a pair of random variables $(\xi,\eta)$ have this stationary/limit distribution, then their marginal densities are given by
\[ f_\xi(x) = \sqrt{\frac{2}{\pi}} e^{-\frac{x^2}{2}},  \quad f_\eta(y) = \sqrt{\frac{2}{\pi}} y^2 e^{-\frac{y^2}{2}}, \quad x,y> 0. \\
\]
{\em (c)}  The distributions of $\zeta$ and $(\xi$, $\eta)$ are related as follows: for any measurable non-negative function $f(\cdot)$,
\[
\ex f(\zeta) = \frac{\ex \int_\xi^\eta f(u) du}{\ex(\eta-\xi)}.
\]
\end{lemma}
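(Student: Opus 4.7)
The three parts are closely linked, and I would treat them in the order (a), (b), (c).

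For part (a), the idea is to transport stationarity through the fluid limit theorem. I would specialize to Poisson arrivals so that $Q(\cdot)$ is a Markov chain, pick $Q(0) \dcopy Q_\infty$ to make the process strictly stationary, and observe that then $\overline Q^\rho(t) \dcopy Q_\infty/\sqrt{\rho}$ for every $t \geq 0$. By Lemma~\ref{th:st_dn_FL} the right-hand side converges weakly to $\zeta$. Since the law of $\zeta$ is absolutely continuous, Theorem~\ref{th:process_FL} applies with $\xi_0 = \zeta$ and yields $\overline Q^\rho(\cdot) \Rightarrow \overline Q(\cdot)$ in $\mathbf D$ with $\overline Q(0) \dcopy \zeta$. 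At any fixed $t \geq 0$ the fluid limit is almost-surely continuous (by an easy induction, $\tau_n$ inherits an absolutely continuous law from $\xi_0$ and from the Rayleigh-type increment \eqref{eq:12}), so the marginal convergence gives $\overline Q(t) \dcopy \zeta$, proving stationarity. Uniqueness of the stationary law and convergence $\overline Q(t) \Rightarrow \zeta$ as $t \to \infty$ from an arbitrary admissible initial condition I would deduce from the general ergodicity results for non-explosive growth-collapse processes in Last~\cite{Last}, exploiting the non-explosiveness already recorded in Remark~\ref{rem:nonexplosive}.

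For part (b) I would do a direct fixed-point calculation on the kernel~\eqref{eq:FL_MC}. Starting from the candidate density $f_\xi$ and differentiating \eqref{eq:12}, the conditional density of $\eta$ given $\xi = x$ is $y\,e^{(x^2-y^2)/2}$ on $\{y > x\}$, so the joint density simplifies to $\sqrt{2/\pi}\,y\,e^{-y^2/2}$ on the triangle $0<x<y$; integrating in $x$ yields the claimed $f_\eta$. Applying the uniform cut-off \eqref{eq:13} then shows that the next post-jump level has density
\[
\int_x^\infty \frac{1}{y}\,f_\eta(y)\,dy = \sqrt{2/\pi}\int_x^\infty y\,e^{-y^2/2}\,dy = \sqrt{2/\pi}\,e^{-x^2/2} = f_\xi(x),
\]
so the asserted pair $(\xi,\eta)$ is invariant. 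Uniqueness of the invariant law of the embedded chain and convergence of $(\overline Q(\tau_{n-1}),\overline Q(\tau_n-))$ to $(\xi,\eta)$ follow again from Harris-type arguments in Last~\cite{Last} (or could be obtained independently via a straightforward coupling exploiting the fact that the uniform cut-off \eqref{eq:13} forgets the pre-jump state with strictly positive density on any $[0,\varepsilon]$).

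For part (c) I would exploit the growth-collapse/renewal structure. Because $\overline Q(\cdot)$ grows linearly at rate $\lambda$ between jumps, the change of variable $u = \overline Q(\tau_{n-1}) + \lambda(s-\tau_{n-1})$ gives
\[
\int_{\tau_{n-1}}^{\tau_n} f(\overline Q(s))\,ds = \frac{1}{\lambda}\int_{\overline Q(\tau_{n-1})}^{\overline Q(\tau_n-)} f(u)\,du, \qquad \tau_n-\tau_{n-1} = \frac{1}{\lambda}\bigl(\overline Q(\tau_n-)-\overline Q(\tau_{n-1})\bigr).
\]
The renewal-reward theorem applied to the ergodic chain from (b) then identifies the long-run time average $\lim_{T\to\infty} T^{-1}\int_0^T f(\overline Q(s))\,ds$ with $\ex\!\int_\xi^\eta f(u)\,du\,/\,\ex(\eta-\xi)$, while ergodicity from (a) identifies the same limit with $\ex f(\zeta)$. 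Equating the two expressions yields (c).

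The main obstacle is the uniqueness-and-convergence halves of (a) and (b): existence of the stationary laws and the formula (c) are essentially a computation and a renewal-reward argument, but the process and the embedded chain live on an unbounded state space with a non-trivial interaction between linear growth and random cut-offs, so standard Markov-chain recipes do not apply off the shelf. The cleanest route is to check the non-explosiveness and drift conditions of Last~\cite{Last} and read off the required ergodicity from there.
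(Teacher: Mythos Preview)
Your plan is correct and, for parts (a) and (c), matches the paper's argument almost verbatim: stationarity of $\zeta$ is obtained by pushing the stationary pre-limit through Theorem~\ref{th:process_FL}, and (c) is exactly the two applications of the ergodic theorem you describe. The paper cites Boxma et al.~\cite{OnnoGC} rather than Last~\cite{Last} for uniqueness and convergence in (a), but that is a matter of taste. Your explicit specialization to Poisson arrivals in (a) is a nice touch; the paper is terser there.

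Part (b) is where you and the paper genuinely diverge. You verify that the candidate $(f_\xi,f_\eta)$ is a fixed point of the kernel and then outsource uniqueness/convergence to \cite{Last} or a coupling. The paper instead first establishes uniqueness and absolute continuity via the perpetuity representation
\[
\bigl(\overline Q(\tau_n)\bigr)^2 = U_n^2\Bigl(\bigl(\overline Q(\tau_{n-1})\bigr)^2 + 2E_n(1)\Bigr),
\qquad
\xi^2 \dcopy \sum_{n\ge 1} E_n(1)\prod_{i=1}^n U_i^2,
\]
borrowed from Chafa\"\i\ et al.~\cite{Malrieu}, and only then computes $f_\xi$ by converting the invariance integral equation into the ODE $f_\xi'' + x f_\xi' + f_\xi = 0$, ruling out the Dawson-function solution by the finite-mean constraint coming from the perpetuity. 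Your route is shorter if one already guesses the answer; the paper's route is more self-contained on the uniqueness side, since the perpetuity gives existence, uniqueness, absolute continuity, and finite mean in one stroke without appealing to general Harris-recurrence machinery for the embedded chain.
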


\begin{remark}{\em 
By Lemmas~\ref{lem:st_dn}, \ref{th:st_dn_FL} and~\ref{lem:stationary_FL}(a), for the $G/M/\infty$ queue with FIFO-/LIFO-batch departures, the limits as $\rho \to \infty$ and $t \to \infty$ commute if the inter-arrival distribution is non-lattice.}
\end{remark}

\begin{comment}
\begin{lemma}
The limit stationary distribution $\zeta$ is the unique stationary and limit distribution for the Markov process $\overline{Q}(\cdot)$.
\end{lemma}
\end{comment}

Note that, as Theorem~\ref{th:process_FL} is applied to the initial conditions $\overline{Q}^\rho(0) \dcopy Q_\infty / \rho \Rightarrow \zeta$, $\rho \to \infty$, it follows that the distribution of $\zeta$ is stationary for the fluid limit $\overline{Q}(\cdot)$. The uniqueness of this stationary distribution and convergence to it as $t \to \infty$ (and hence (a)) follow by  Boxma et al.~\cite{OnnoGC} who prove these types of results for a class of growth-collapse processes that includes $\overline{Q}(\cdot)$. Their proof views a growth-collapse process as regenerative with regeneration instants being returns to a particular level, and shows that the mean return time to any level is finite. In combination with (b), this implies (c)  since
\[
\ex f(\zeta) = \text{a.s.-}\lim_{t \to \infty} \frac{\int_0^t f(\overline{Q}(s))ds}{t} = \frac{\ex \int_\xi^\eta f(u) du}{\ex(\eta-\xi)},
\]
where the first identity is by the ergodic theorem for regenerative processes, and the second --- by the ergodic theorem for the Markov chain $(\overline{Q}(\tau_{n-1}), \overline{Q}(\tau_n-))$, $n \geq 1$.

The stationary densities in (b) can be obtained from a more general result by Guillemin~\cite[Section 3, Example 2]{AIMD2}. We include here a more straighforward derivation.

\begin{proof}[Proof of (b)] The relation \eqref{eq:12} means that the jump rate of the fluid limit $\overline{Q}(\cdot)$ is $\lambda x$ when it is in state $x$. Hence, the post-jump levels satisfy \[\left(\overline{Q}(\tau_n)\right)^2 = U^2_n\left(\left(\overline{Q}(\tau_{n-1})\right)^2+2E_n(1)\right), \quad n \geq 1, \] where the $U_n$'s are distributed uniformly on the interval $(0,1)$, the $E_n(1)$'s are distributed exponentially with parameter 1, all mutually independent (in detail, this derivation is conducted, for example, by Chafa\"\i~et al.~\cite{Malrieu}). Then the unique stationary and limit distribution for the Markov chain $\left(\overline{Q}(\tau_n)\right)^2$, $n \geq 0$,  is that of $\xi^2 \dcopy \sum_{n \geq 1} E_n(1) \prod_{i=1}^n U_i^2$. In particular, this representation implies that $\xi$ is absolutely continuous and has a finite mean. Then we can find the density $f_\xi(\cdot)$ by simply solving a differential equation. 

Indeed, by the relations~\eqref{eq:FL_MC}, the stationary post-jump density $f_\xi(\cdot)$ solves the integral equation
\[
f_\xi(x) = \frac{1}{\rho} \int_x^\infty e^{-\frac{y^2}{2\rho}} \left( \int_0^y e^{\frac{u^2}{2\rho}} f_\xi(u) du \right) dy, \quad x > 0,
\]
which is equivalent to the differential equation
\begin{equation} \label{eq:17}
f''_\xi(x) + x f'_\xi(x)+f_\xi(x) = 0, \quad x > 0.
\end{equation}
The general solution to~\eqref{eq:17} is
$
f_\xi(x) = C_1 e^{-x^2/2} + C_2 D (x/\sqrt{2}),
$
where $C_1$, $C_2$ are constants and  $D(\cdot)$ is the Dawson function given by
$
D(x) = e^{-x^2} \int_0^x e^{u^2} du.
$
Note that $D(x) \sim 1/(2x)$ as $x \to \infty$, and hence $\xi$ has a finite mean if and only if $C_2=0$, i.e.
$
f_\xi(x) = \sqrt{2/\pi} \,e^{-x^2/2}.
$
\end{proof}

\section{Proofs for Section~\ref{sec:stationary_analysis}} \label{sec:proof_stationary_analysis}
In this section, we fill in the details of the proof of Lemma~\ref{lem:st_dn} (the main ideas of that proof were already given in Section~\ref{sec:stry_dn}), and also we prove Lemma~\ref{th:st_dn_FL} here.

\begin{comment}

\begin{proof}[Proof of Lemma \ref{lem:busy_period} under FIFO-batch departures] Denote by $E_0(\mu)$ the service time of the customer that initiates the busy period, by $E_1(\mu), E_2(\mu), \ldots$ the service times of the newly arriving customers, and by $A_1, A_2, \ldots$ their inter-arrival times. The queue becomes empty if the last customer in the queue finishes his service before the next customer arrives. Hence, the busy period lasts for
$
B = S_{N-1} + E_{N-1}(\mu),
$
where
$
N := \min \{ n \geq 1 \colon E_{n-1}(\mu) \leq A_n \}.
$
But the memoryless property of the exponential distribution implies that
$
N \dcopy \min \{ n \geq 1 \colon E_0(\mu) \leq S_n \},
$
and hence $B \dcopy E_0(\mu)$.
\end{proof}

\end{comment}

\begin{proof}[Proof of Lemma~\ref{lem:st_dn}]
In Section~\ref{sec:stry_dn}, we have discussed why the non-latticeness of the inter-arrival distribution is sufficient for the limit queue-length distribution to exist, and here we show that it is also necessary. Indeed, if the inter-arrival distribution is concentrated on a grid $\{\delta, 2 \delta, \ldots\}$ (e.g.\ if the inter-arrival time is deterministic),  then, for any $\eps \in (0,\delta)$,
\[
\pr \{ Q(n \delta +\eps) = 0 \} = \pr \{ Q(n\delta) = 0 \} + (1-\pr \{ Q(n \delta) = 0 \}) \pr \{ E(\mu) < \eps \},
\]
and hence the limits $\lim_{n \to \infty} \pr \{ Q(n\delta+\eps) = 0 \}$ and $\lim_{n \to \infty} \pr \{ Q(n\delta) = 0 \}$ can not be equal.
\begin{comment}The  only possibility for the two limits to be the same is to be 1. This contradicts the fact that
\begin{align*}
\pr \{ Q(\delta n)  = 0 \} \leq \pr \{  \text{no arrival at $\delta n$}\} = 1 - \pr\{ \text{arrival at $\delta n$} \},
\end{align*}
where $\pr\{ \text{arrival at $\delta n$}\}$ has a non-zero or no limit as $n \to \infty$ (see e.g.~\cite[Theorem I.2.2 and Corollary I.2.3]{Asmussen}).
\end{comment}

The remainder of the proof  establishes \eqref{eq:fla_stat_dn}. In Section~\ref{sec:stry_dn}, we derive the preliminary formula~\eqref{eq:prelimiary_fla_stry_dn}, and here we compute the $\ex C$ and  $\ex N_k$'s, see the formulae \eqref{eq:mean_C} and~\eqref{eq:mean_N_k} below. As one plugs \eqref{eq:mean_C} and \eqref{eq:mean_N_k}  into~\eqref{eq:prelimiary_fla_stry_dn}, the expression \eqref{eq:fla_stat_dn} follows.

First we find the mean cycle length $\ex C$. Under LIFO-batch departures, a busy period lasts as long as the service time of the customer that has initiated the busy period lasts. The first arrival following the service completion of this customer initiates the next busy period. Hence,
$
C \dcopy S_N,
$
where
$
N: = \min\{ n \geq 1 \colon S_n \geq E(\mu) \}
$
and $E(\mu)$  is independent from $\{S_1, S_2, \ldots \}$. (This representation implies, in particular, that the cycle distribution is non-lattice if and only if the inter-arrival distribution is non-lattice.) We have
\[
\pr \{ N = n \} = \pr \{ A < E(\mu) \}^{n-1} \pr \{ A \geq E(\mu) \} = (\ex e^{-\mu A})^{n-1} (1-\ex e^{-\mu A}),
\]
and then Wald's identity gives
\begin{equation} \label{eq:mean_C}
\ex C = \ex N \,\ex A = \frac{\ex A}{1 - \ex e^{-\mu A}} = \frac{1}{\lmb (1 - \ex^{-\wtil A / \rho})}.
\end{equation}
Now we compute the mean number $\ex N_k$ of level $k$ arrivals during a level $k-1$ sojourn time, $k \geq 2$. Define a level $k$ cycle to be a period of time between two successive level $k$ arrivals during a level $k-1$ sojourn time, and denote by $C_k$ the generic level $k$ cycle length. Then
\[
\pr \{ N_k \geq n \} = \pr \{ A < E((k-1)\mu) \} \pr\{ C_k < E((k-1)\mu) \}^{n-1}, \quad n \geq 1,
\]
where $X$ is independent from $E((k-1)\mu)$ and $C_k$ is not. By specifying the number $i$ of arrivals during a level $k$ cycle, we get
\begin{align*}
\pr \{ C_k < E((k-1)\mu) \}
&=\sum_{i \geq 1}   \pr \{ A < \min (E(\mu), E((k-1)\mu)) \}^{i-1} \pr \{ E(\mu) \leq A < E((k-1)\mu) \} \\
&= \frac{\pr \{ E(\mu) \leq A < E((k-1)\mu) \}}{1-\pr \{ A < \min(E(\mu), E((k-1)\mu) ) \}},
\end{align*}
where $E(\mu)$, $E((k-1)\mu)$ and $A$ are mutually independent, $E(\mu)$ represents the service time of the customer that has initiated a level $k$ cycle, $E((k-1)\mu)$ --- the level $k-1$ sojourn time that contains this level $k$ cycle, and $A$ --- an inter-arrival time during this level $k$ cycle. The above yields
\begin{align}
\ex N_k &= \sum_{n \geq 1} \pr \{ N_k \geq n \} = \frac{\pr \{ A< E((k-1)\mu) \}}{1 - \pr \{ C_k < E((k-1)\mu) \}}  \nonumber \\
&= \frac{\pr \{ A < E((k-1)\mu) \} (1 - \pr\{ A < E(k\mu)  \})}{1 - \pr \{A < E((k-1)\mu) \}} = \frac{\ex e^{-(k-1) \wtil A / \rho} (1 - \ex e^{-k \wtil A / \rho})}{1 - \ex e^{-(k-1) \wtil A / \rho}} \label{eq:mean_N_k}.
\end{align}
The proof of the lemma is now complete.
\end{proof}

The following three propositions are auxiliary results that are used in the proofs of Lemma~\ref{th:st_dn_FL}, and also of Theorem~\ref{th:process_FL} in Section~\ref{sec:transient_analysis}. We provide brief comments about the proofs of these three statements.

\begin{proposition} \label{prop:swap}
For any $n \geq k \geq 1$,
$
\sum_{i=k}^n i \wtil{A}_i \mathop{\rm =}\limits^{\textup{d}} (k-1) \wtil S_{n-k+1}+\sum_{i=1}^{n-k+1}  \wtil{S}_i.
$
\end{proposition}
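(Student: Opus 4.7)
The proof plan is essentially a bookkeeping exercise: I will show that both sides are linear combinations of the i.i.d.\ sequence $\wtil A_1, \wtil A_2, \ldots$ in which the same multiset of coefficients $\{k, k+1, \ldots, n\}$ appears, and then conclude equality in distribution by exchangeability.

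First I would unpack the right-hand side by writing $\wtil S_i = \sum_{j=1}^{i} \wtil A_j$ and swapping the order of summation. Setting $m := n-k+1$, this gives
\[
(k-1)\wtil S_{m} + \sum_{i=1}^{m} \wtil S_i = \sum_{j=1}^{m} (k-1)\wtil A_j + \sum_{j=1}^{m} \sum_{i=j}^{m} \wtil A_j = \sum_{j=1}^{m} \bigl[(k-1) + (m - j + 1)\bigr] \wtil A_j = \sum_{j=1}^{m} (n-j+1)\,\wtil A_j,
\]
so the coefficient of $\wtil A_j$ on the right is $n-j+1$ for $j = 1, \ldots, m = n-k+1$.

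Next I would compare this with the left-hand side, where the coefficient of $\wtil A_i$ is $i$ for $i = k, \ldots, n$. Both expressions are weighted sums of $n-k+1$ terms from the i.i.d.\ family, and the weights are exactly the integers $k, k+1, \ldots, n$ in each case (on the right, indices $j = 1, \ldots, n-k+1$ carry weights $n, n-1, \ldots, k$ respectively). The reindexing $j \mapsto n-j+1$ turns the right-hand side into $\sum_{i=k}^{n} i\,\wtil A_{n-i+1}$.

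Finally, since $\wtil A_1, \wtil A_2, \ldots$ are i.i.d., the subfamily $(\wtil A_{n-i+1})_{i=k}^{n}$ has the same joint distribution as $(\wtil A_i)_{i=k}^{n}$, and the identity in distribution follows. There is no genuine obstacle here; the only thing worth being careful about is the index arithmetic in the swap of summation, so I would double-check the boundary cases $k=1$ and $k=n$ to make sure the telescoping works as claimed.
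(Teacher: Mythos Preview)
Your proof is correct and is essentially the same as the paper's: the paper's one-line justification is precisely the substitution $\wtil A_i \mapsto \wtil A_{n-i+1}$ on the left-hand side, which is the reindexing you identify after unpacking the right-hand side. Your write-up is simply a more explicit version of that same exchangeability argument.
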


The result follows as $\wtil A_i$ in the sum is replaced by $\wtil A_{n-i+1}$, $k \leq i \leq n$.

\begin{proposition} \label{prop:lln} {\em (a)}  As $n \to \infty$, $\sum_{i=1}^n \wtil S_i / n^2 \to 1/2$ a.s. {\em (b)}  Let $\alpha> 0$ and $n \colon \nneg \to \zplus$, $n(\rho) \sim \alpha \sqrt{\rho}$ as $\rho \to \infty$. Then $\ex e^{-\sum_{i=1}^{n(\rho)} \wtil S_i / \rho} \to e^{-\alpha^2/2}$.
\end{proposition}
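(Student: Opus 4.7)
The plan is to derive (a) from the strong law of large numbers by a Ces\`aro-type argument, and then obtain (b) as a direct consequence of (a) via the dominated convergence theorem.

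\textbf{Part (a).} Since $\wtil A_1, \wtil A_2, \ldots$ are i.i.d.\ with $\ex \wtil A_1 = 1$, the strong law of large numbers gives $\wtil S_i / i \to 1$ a.s. I would then use the elementary fact that, for any sequence $a_i \to 1$, one has $n^{-2} \sum_{i=1}^n i \,a_i \to 1/2$. Applying this with $a_i = \wtil S_i / i$ yields
\begin{equation*}
\frac{1}{n^2}\sum_{i=1}^n \wtil S_i \;=\; \frac{1}{n^2}\sum_{i=1}^n i \cdot \frac{\wtil S_i}{i} \;\longrightarrow\; \int_0^1 x\,dx \;=\; \tfrac12 \quad \text{a.s.}
\end{equation*}
Concretely, fix $\varepsilon > 0$ and $N$ such that $|\wtil S_i / i - 1| < \varepsilon$ for $i > N$; then split the sum at $N$ and use that $n^{-2}\sum_{i=1}^N \wtil S_i \to 0$ while $n^{-2}\sum_{i=N+1}^n i \to 1/2$.

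\textbf{Part (b).} The idea is to transfer the a.s.\ convergence from (a) to the family indexed by $\rho$. Since $n(\rho) \to \infty$ as $\rho \to \infty$, the a.s.\ event from part (a) implies
\begin{equation*}
\frac{1}{n(\rho)^2}\sum_{i=1}^{n(\rho)} \wtil S_i \;\longrightarrow\; \tfrac12 \quad \text{a.s. as } \rho \to \infty,
\end{equation*}
and combining this with the deterministic relation $n(\rho)^2/\rho \to \alpha^2$ yields
\begin{equation*}
\frac{1}{\rho}\sum_{i=1}^{n(\rho)} \wtil S_i \;=\; \frac{n(\rho)^2}{\rho} \cdot \frac{1}{n(\rho)^2}\sum_{i=1}^{n(\rho)} \wtil S_i \;\longrightarrow\; \frac{\alpha^2}{2} \quad \text{a.s.}
\end{equation*}
Consequently $\exp\bigl(-\rho^{-1}\sum_{i=1}^{n(\rho)} \wtil S_i\bigr) \to e^{-\alpha^2/2}$ a.s. Since these random variables are dominated by $1$ (as the exponent is nonnegative), the dominated convergence theorem gives the claimed convergence of expectations.

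\textbf{Main obstacle.} There is no real obstacle: both parts are straightforward once (a) is reduced to a Ces\`aro statement about $\wtil S_i/i$. The one place to exercise mild care is the passage in (b) from the discrete-time a.s.\ limit of (a) to the $\rho$-indexed limit, but this is immediate because $n(\rho) \to \infty$ so that, on the full-measure set supplied by (a), the subsequence $\{n(\rho)\}$ inherits the same limit.
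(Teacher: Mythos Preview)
Your proposal is correct and matches the paper's approach: the paper only remarks that (b) follows from (a) by the dominated convergence theorem, without spelling out (a), and your Ces\`aro argument for (a) together with the passage to the $\rho$-indexed family and dominated convergence for (b) is exactly the intended route.
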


\begin{proposition} \label{prop:exp_fn}
Let $\alpha > 0$. {\em (a)}  The function
\[
f(x) = \left\{\begin{array}{ll}
        \alpha, & \text{if } x = 0,\\
        \dfrac{1 - e^{-\alpha x}}{x}, & \text{if } x > 0,
        \end{array}\right.
\]
is decreasing and continuous. {\em (b)}  As $x \to 0$, $\ex f(\wtil A) \to 1$.
\end{proposition}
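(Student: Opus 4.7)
For part (a), my plan is to use the integral representation
\[
f(x) \;=\; \int_0^\alpha e^{-ux}\,\mathrm{d}u,
\]
which is valid for every $x \in \nneg$: it evaluates to $\alpha$ at $x=0$ and to $(1-e^{-\alpha x})/x$ for $x>0$, matching both branches of the definition. From this representation both claims are immediate. Continuity of $f$ on $\nneg$ follows from the continuous $x$-dependence of $e^{-ux}$ together with the uniform bound $e^{-ux}\leq 1$ on $u\in[0,\alpha]$. Strict decreasingness follows because $e^{-ux}$ is strictly decreasing in $x$ for each fixed $u>0$, and this property is preserved by integration in $u$ over the positive-length interval $[0,\alpha]$.

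For part (b), the right reading of $\ex f(\wtil A)$ is the expectation of the random function obtained by substituting the parameter $\alpha$ of $f$ by the random variable $\wtil A$, evaluated at the running argument $x$; equivalently,
\[
\ex f(\wtil A) \;=\; \ex\!\left[\frac{1-e^{-\wtil A x}}{x}\right] \;=\; \frac{1-\ex e^{-x\wtil A}}{x}, \qquad x>0.
\]
This is the interpretation that is consistent with the way the proposition is used in Lemma~\ref{th:st_dn_FL} and Theorem~\ref{th:process_FL}, where one needs $(1-\ex e^{-x\wtil A})/x\to 1$ as $x\to 0$ (typically with $x=k/\rho$, $\rho\to\infty$). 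Applying part~(a) pointwise with parameter $\wtil A$ (well-defined since $\wtil A>0$ a.s.), the random function $x\mapsto (1-e^{-\wtil A x})/x$ is a.s.\ decreasing in $x$ and continuous at $0$; in particular it is bounded above by its value at $0$, which is $\wtil A$. Since $\ex\wtil A=1<\infty$ by normalization, and $(1-e^{-\wtil A x})/x\uparrow \wtil A$ a.s.\ as $x\downarrow 0$, dominated convergence (equivalently, monotone convergence) gives $\ex f(\wtil A)\to \ex\wtil A=1$.

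The only genuine obstacle is this reinterpretation of notation in~(b): read literally with $\alpha$ held fixed, $\ex f(\wtil A)$ would not depend on $x$ at all. Once the convention that the parameter $\alpha$ is taken to be $\wtil A$ is adopted, the limit is a one-line consequence of dominated convergence supported by the monotonicity and continuity supplied by part~(a).
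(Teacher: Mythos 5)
Your proof is correct and matches the paper's (very terse) argument: the paper only remarks that part (b) follows from part (a) by dominated convergence, which is exactly your reasoning, and your reading of the notation $\ex f(\wtil A)$ — substitute $\wtil A$ for the parameter $\alpha$ and send the argument $x\to 0$, so that the claim becomes $(1-\ex e^{-x\wtil A})/x \to \ex\wtil A = 1$ — is indeed the one consistent with how the proposition is invoked in Lemma~\ref{th:st_dn_FL} and Lemma~\ref{prop:aux_FL}. The integral representation $f(x)=\int_0^\alpha e^{-ux}\,du$ is a clean way to establish part (a), which the paper leaves unproved as elementary.
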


Both in Proposition~\ref{prop:lln} and Proposition~\ref{prop:exp_fn}, part (b) follows from part (a) by the dominated convergence theorem.

Now we are in a position to prove Lemma~\ref{th:st_dn_FL}.

\begin{proof}[Proof of Lemma \ref{th:st_dn_FL}] The weak convergence result  follows since, by Lemma~\ref{lem:st_dn} and Proposition~\ref{prop:swap},
\[
\pr \left\{ \dfrac{Q_\infty}{\sqrt{\rho}} \geq x \right \} = \ex \frac{1 - e^{-\wtil{A} \lc x \sqrt{\rho} \rc / \rho }}{\lc x \sqrt{\rho} \rc / \rho} \ex e^{ -\sum_{i=1}^{ \lc x \sqrt{\rho} \rc} \wtil{S_i} / \rho }, \quad x > 0,
\]
and then Propositions \ref{prop:lln}(b) and \ref{prop:exp_fn}(b) imply that
$
\pr \left\{ Q_\infty/\sqrt{\rho} \geq x \right \} \to e^{-x^2/2}
$ as $\rho \to \infty$.

To prove the second part of the lemma, which states convergence of the mean, we use the formula
\[
\ex Q_\infty =  \sum_{k \geq 1} \pr \{ Q_\infty \geq k \} = \sum_{k \geq 1} \ex \frac{1-e^{-\wtil A k / \rho}}{k / \rho} \ex e^{- \sum_{i=1}^{k-1} \wtil S_i / \rho}.
\]
Fix a $T>0$ and split the sum above into the sum over $k \leq T \sqrt{\rho}$ and over $k > T \sqrt{\rho}$. Then, by Proposition~\ref{prop:exp_fn}(a),
\begin{equation} \label{eq:18}
\ex \frac{1 - e^{-\wtil A T/ \sqrt{\rho}}}{T/\sqrt{\rho}} \ex L(\rho) \leq \frac{\ex Q_\infty}{\sqrt{\rho}} \leq \ex L(\rho) + U(\rho),
\end{equation}
where
\[
L(\rho) := \frac{1}{\sqrt{\rho}} \sum_{1 \leq k \leq T \sqrt{\rho}} e^{- \sum_{i=1}^{k-1} \wtil S_i / \rho}, \quad U(\rho) :=  \frac{1}{\sqrt{\rho}} \sum_{k > \sqrt{\rho} T} \ex e^{- \sum_{i=1}^{k-1} \wtil S_i / \rho}.
\]
Now we find the limits of $\ex L(\rho)$ and $U(\rho)$ as $\rho \to \infty$. Proposition~\ref{prop:lln}(a) implies that
$
L(\rho) \to \int_0^T e^{-x^2/2} dx
$
a.s.\ as $\rho \to \infty$,
and because of the uniform boundedness $L(\rho) \leq T$, $\rho > 0$,  we also have
\begin{equation} \label{eq:19}
\ex L(\rho) \to \int_0^T e^{-x^2/2} dx \quad \text{as $\rho \to \infty$}.
\end{equation}
To find the limit of $U(\rho)$, we split the sum in its definition into the sums over the sets of the form  $n \lf T \sqrt{\rho} \rf < k \leq (n+1) \lf T \sqrt{\rho} \rf$ and get
\begin{align*}
U \leq \frac{\lf T \sqrt{\rho} \rf}{\sqrt{\rho}} \sum_{n \geq 1} \ex e^{- \sum_{i=1}^{n \lf T \sqrt{\rho} \rf} \wtil S_i / \rho} \leq T \sum_{n \geq 1} \left ( \ex e^{- \sum_{i=1}^{\lf T \sqrt{\rho} \rf} \wtil S_i / \rho}  \right)^n
 \leq T \frac{\ex e^{- \sum_{i=1}^{\lf T \sqrt{\rho} \rf} \wtil S_i / \rho}}{1-\ex e^{- \sum_{i=1}^{\lf T \sqrt{\rho} \rf} \wtil S_i / \rho}} .
\end{align*}

The last inequality and \eqref{eq:18}--\eqref{eq:19}, in combination with Propositions~\ref{prop:lln}(b) and \ref{prop:exp_fn}(b), imply that, for any $T > 0$,
\[
\int_0^T e^{-x^2/2} dx \leq \liminf_{\rho \to \infty} \frac{\ex Q_\infty}{\sqrt{\rho}} \leq \limsup_{\rho \to \infty} \frac{\ex Q_\infty}{\sqrt{\rho}} \leq \int_0^T e^{-x^2/2} dx + T \frac{e^{-T^2/2}}{1 - e^{-T^2/2}},
\]
and then, by sending $T \to \infty$, we obtain
$
\ex Q_\infty/\sqrt{\rho} \to \int_0^\infty e^{-x^2/2} dx = \sqrt{\pi/2}
$.
\end{proof}

% PROOFS OF FLUID LIMIT THEOREM
\section{Proof of Theorem~\ref{th:process_FL}}  \label{sec:proof_FL}
 \begin{comment} Figure 1 below helps visualize the formal definitions that we are about to give. \end{comment}
 
In this section, we prove the fluid limit theorem of Section~\ref{sec:transient_analysis}. We first introduce some random quantities and events that are key for the analysis.% carried out here. 

Without loss of generality, assume throughout that no departure occurs simultaneously with an arrival, and hence any departure is within an inter-arrival period $(S_{n-1}, S_n)$. Define the indices (see also Figure~\ref{fig:post_pre_departure_levels})
\[
N_0:=0, \quad N_l := \min\{ n \geq N_{l-1}+1 \colon Q(S_n-) < Q(S_{n-1}) \}, \quad l \geq 1;
\]
that is, $(S_{N_l-1}, S_{N_l})$ is the $l$-th inter-arrival period that contains a departure.

\begin{figure}[!b]
\centering
\includegraphics[scale=0.75]{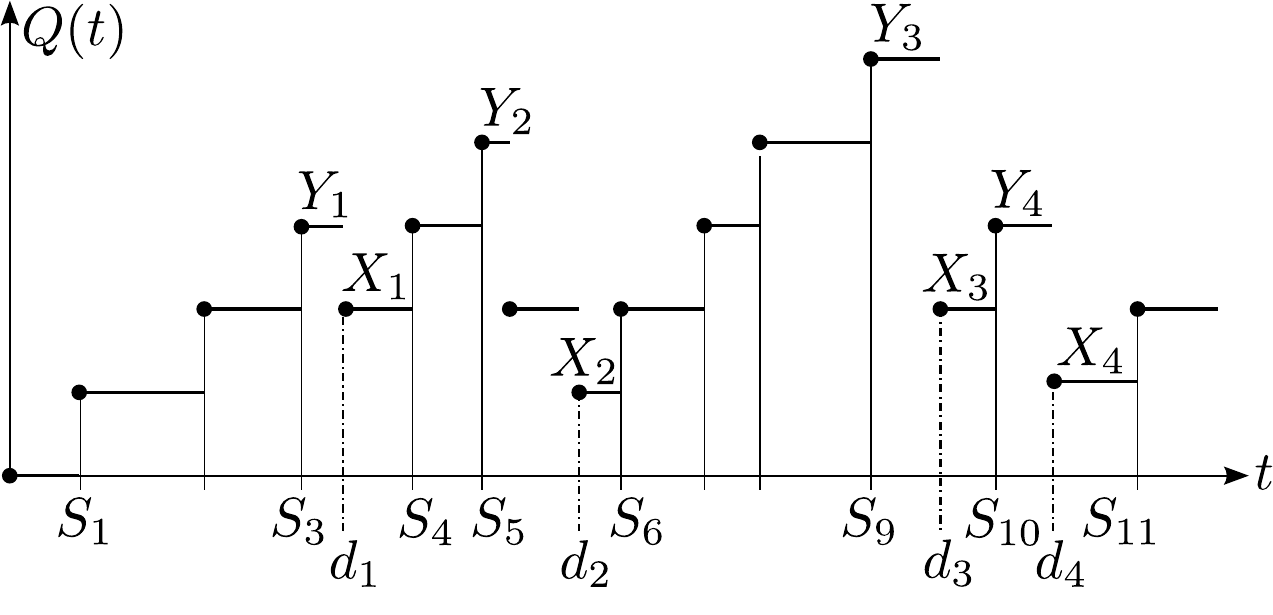}
\caption{For this realization, $N_1 = 4$, $N_2 = 6$, $N_3 = 10$, and $N_4 = 11$. The events $D_1$, $D_3$ and $D_4$ have occured in this realization, but the event  $D_2$ has not.}
\label{fig:post_pre_departure_levels}
\end{figure}

Next, put (again, see Figure~\ref{fig:post_pre_departure_levels})
\[
X_0 := Q(0), \quad Y_l := Q(S_{N_l-1}), \quad X_l := Q(S_{N_l}-),  \quad l \geq 1;
\]
that is, $Y_l$ is the queue size in the beginning of the $l$-th inter-arrival period that contains a departure, and $X_l$ is the queue size in the end of this period.
Somewhat loosely, we refer to the $Y_l$'s and $X_l$'s as {\it pre- and post-departure queue levels}, respectively. They are prototypes of the pre- and post-jump fluid limit levels $\overline{Q}(\tau_l-)$ and $\overline{Q}(\tau_l)$. Both under FIFO- and LIFO-batch departures, the sequence $X_0$, $(Y_n,X_n)$, $n \geq 1$, forms a Markov chain. To describe the evolution of this Markov chain, define the events
%\begin{align*}
%E_i := \{ \exists j =1, \ldots, i \colon E_j(\mu) < A \}, \quad i \geq 1,
%\end{align*}
\begin{align*}
E_i := \{ E_j(\mu) < A \text{ for some $j = 1, \ldots, i$}\}, \quad i \geq 1,
\end{align*}
where $E_j(\mu)$, $j \geq 1$, are i.i.d.\ copies of $E(\mu)$ (the generic service time, distributed exponentially with parameter $\mu$) that are independent from $A$ (the generic inter-arrival time). For $n \geq 1$, $k' \geq 0$, $i \geq k'+1$, $0 \leq k \leq  i-1$, we have %(recall that,to denote an event's complements, we overline the event's notation) 
%(recall that $\overline{E_k}$ denotes the complement of $E_k$)
\begin{equation} \label{eq:21}
\pr\{ Y_{n+1} = i, X_{n+1} \geq  k | X_n = k' \} =   \prod_{j=k'+1}^{i-1} \pr\{ E(j \mu) > A \} \, \pr\{ E_i \cap \overline{E_k}  \},
\end{equation}
where $E(j\mu)$ is distributed exponenitally with parameter $j \mu$ and is independent from $A$, and $\overline{E_k}$ denotes the complement of $E_k$. In particular, for $n \geq 1$,
\begin{subequations}
\begin{gather}
\pr \{ X_n \geq k | Y_n = i \} = \pr\{ \overline{E_k} |  E_i\}, \quad 0 \leq k \leq i-1, \label{eq:23} \\
\pr \{ Y_{n+1}-X_n \geq i | X_n = k \} =
 \prod_{j = k+1}^{k+i - 1} \pr \{ E(j \mu) > A \}, \quad   i \geq 1, \label{eq:22}
\end{gather}
\end{subequations}
(For $n=0$, the formulas \eqref{eq:21} and \eqref{eq:22} read slightly differently, we omit the details.)

Now we introduce prototypes of the fluid limit jump instants $\tau_l$. Recall that, by definition, each of the intervals $(S_{N_l-1}, S_{N_l})$ contains a departure. For $l \geq 1$, let $d_l$ be the last departure instant  within $(S_{N_l-1}, S_{N_l})$ (see Figure~\ref{fig:post_pre_departure_levels}), and put $d_0 := 0$. Along with $d_l$, any departure instant within $(S_{N_l-1}, S_{N_l})$ can be referred to as a prototype of $\tau_l$ because the events (again, see Figure~\ref{fig:post_pre_departure_levels})
\[
D_l := \{  (S_{N_l-1}, S_{N_l}) \text{ contains a single departure instant} \}, \quad l \geq 1,
\]
have a high probability under the fluid scaling, as shown below.

In addition to the events $D_l$ and $E_i$, we also use
%\begin{align*}
%E_i^1 :=\{ \exists!  j=1, \ldots, i \colon E_j(\mu) < A \}, \quad i \geq 1.
%\end{align*}
\begin{align*}
E_i^1 :=\{ E_j(\mu) < A \text{ for a unique $j = 1, \ldots, i$}\}, \quad i \geq 1.
\end{align*}
Note the following difference: the events  $D_l$ relate to departures during  specific inter-arrival periods, and the events  $E_i$ and $E_i^1$ --- to departures from the queue of a specific size.

The proof of Theorem~\ref{th:process_FL} proceeds as follows. Section~\ref{sec:fdd} contains several preparatory statements, of which Lemmas~\ref{lem:joint_MC_FL} and~\ref{lem:unique_service_completion} are the main results. Lemma~\ref{lem:joint_MC_FL} shows convergence of the post-/pre-departure queue levels to the the post-/pre-jump fluid limit levels, and of the departure instants themselves --- to the jump instants of the fluid limit. Lemma~\ref{lem:unique_service_completion} proves that, in the scaling regime,  there is at most a single departure instant per inter-arrival period. Also there are Lemmas~\ref{lem:FL_add_delta}--\ref{lem:FL_add_X} in Section~\ref{sec:fdd}. Lemmas~\ref{lem:FL_add_delta} and \ref{lem:FL_add_X} lay the foundation for the proof of Lemma~\ref{lem:joint_MC_FL} --- they derive the distributional relations~\eqref{eq:FL_MC} between the post- and pre-jump fluid limit levels from the queue-length dynamics. Lemma~\ref{prop:aux_FL} is a preliminary version of Lemma~\ref{lem:unique_service_completion}, it is used in the proof of Lemmas~\ref{lem:FL_add_X} and \ref{lem:unique_service_completion}.

Section~\ref{sec:tightness} contains Lemmas~\ref{lem:fdd}--\ref{lem:tightness} that establish convergence of the scaled queue-length processes to the fluid limit. Lemma~\ref{lem:fdd} shows convergence of the finite-dimensional distributions. Lemma~\ref{lem:tightness} proves relative compactness of the family of the scaled queue-length processes by checking a compact containment and oscillation control properties. Both Lemmas~\ref{lem:fdd} and~\ref{lem:tightness} use the preparatory Lemmas~\ref{lem:joint_MC_FL}--\ref{lem:unique_service_completion} and Remark~\ref{rem:nonexplosive}.

\subsection{Preparatory results} \label{sec:fdd}

First, we prove that the increments $Y_l-X_{l-1}$ weakly converge in the scaling regime to a Weibull distribution. In particular, this implies that the relation  \eqref{eq:12} holds.

\begin{lemma} \label{lem:FL_add_delta} Suppose, for an $n \geq 1$,
\[
\left( X_0, (Y_l, X_l )_{l=1}^{n-1}) \right) / \sqrt{\rho} \Rightarrow \left( \xi_0, (\eta_l, \xi_l)_{l=1}^{n-1} \right)
\]
jointly as $\rho \to \infty$, where the limit vector is absolutely continuous with a density 
\[f_{ \xi_0, (\eta_l, \xi_l)_{l=1}^{n-1}}\left( x_0, (y_l, x_l)_{l=1}^{n-1} \right).\] Then
\[
\left( X_0, (Y_l, X_l)_{l=1}^{n-1}, Y_n-X_{n-1}) \right) / \sqrt{\rho} \Rightarrow \left( \xi_0, (\eta_l, \xi_l )_{l=1}^{n-1}, \Delta_n \right)
\]
jointly, too, and the extended limit vector is absolutely continuous with the density
\begin{equation*}
f_{\xi_0, (\eta_l, \xi_l)_{l=1}^{n-1},  \Delta_n} \left(x_0, (y_l, x_l)_{l=1}^{n-1}, z \right)
= (x_{n-1}+z) e^{-x_{n-1} z - \frac{z^2}{2}} f_{\xi_0, (\eta_l, \xi_0)_{l=1}^{n-1}}\left( x_0, (y_l, x_l)_{l=1}^{n-1}) \right).
\end{equation*}
The lemma also holds for $n \geq 2$ with the components $X_0$, $\xi_0$, $x_0$ omitted throughout the statement.
\end{lemma}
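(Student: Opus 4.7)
The approach rests on the Markov property of the sequence $X_0, (Y_l, X_l)_{l \geq 1}$: by~\eqref{eq:22}, the conditional law of $Y_n - X_{n-1}$ given the entire past depends only on $X_{n-1}$, and reads
\[
\pr\{Y_n - X_{n-1} \geq i \mid X_{n-1} = k\} = \prod_{j=k+1}^{k+i-1} \ex e^{-j \wtil{A}/\rho} = \ex \exp\Bigl(-\tfrac{1}{\rho}\sum_{j=k+1}^{k+i-1} j \wtil{A}_j\Bigr),
\]
where the second equality uses independence of i.i.d.\ copies $\wtil{A}_j$ of $\wtil{A}$. The plan proceeds in two steps: (i) compute the scaling limit of this one-step kernel and identify its conditional density; (ii) splice this limit kernel onto the hypothesised joint convergence of the earlier vector by means of the Markov property and a Skorokhod representation.

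For step (i), Proposition~\ref{prop:swap} rewrites the inner sum as $k \wtil{S}_{i-1} + \sum_{j=1}^{i-1}\wtil{S}_j$ in distribution. Along any sequences $k = k(\rho)$, $i = i(\rho)$ with $k/\sqrt{\rho} \to x \geq 0$ and $i/\sqrt{\rho} \to z > 0$, the strong law for renewal sums and Proposition~\ref{prop:lln}(a) yield $k\wtil{S}_{i-1}/\rho \to xz$ and $\sum_{j=1}^{i-1}\wtil{S}_j/\rho \to z^2/2$ almost surely. Since the integrand is bounded by $1$, dominated convergence gives
\[
\pr\{(Y_n - X_{n-1})/\sqrt{\rho} \geq z \mid X_{n-1} = k\} \longrightarrow e^{-xz - z^2/2},
\]
and differentiation in $z$ identifies the limit conditional density as $f_{\Delta_n \mid \xi_{n-1} = x}(z) = (x+z) e^{-xz - z^2/2}$, which is jointly continuous in $(x, z) \in \nneg \times \pos$.

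For step (ii), let $g \colon \nneg^{2n} \to \real$ be bounded and continuous. By the Markov property,
\[
\ex g\bigl(X_0/\sqrt{\rho}, (Y_l/\sqrt{\rho}, X_l/\sqrt{\rho})_{l=1}^{n-1}, (Y_n - X_{n-1})/\sqrt{\rho}\bigr) = \ex \Phi_\rho\bigl(X_0/\sqrt{\rho}, (Y_l/\sqrt{\rho}, X_l/\sqrt{\rho})_{l=1}^{n-1}\bigr),
\]
where, writing $v_{n-1}$ for the last coordinate of $v \in \nneg^{2n-1}$,
\[
\Phi_\rho(v) = \ex\bigl[g\bigl(v, (Y_n - X_{n-1})/\sqrt{\rho}\bigr) \bigm| X_{n-1} = \lfloor v_{n-1} \sqrt{\rho}\rfloor\bigr].
\]
Step (i) is precisely the statement that $\Phi_\rho$ converges to $\Phi(v) := \int_0^\infty g(v, z)(v_{n-1}+z) e^{-v_{n-1} z - z^2/2}\,\mathrm{d}z$ \emph{continuously}: $\Phi_\rho(v_\rho) \to \Phi(v)$ whenever $v_\rho \to v$. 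Invoking Skorokhod representation for the hypothesised weak convergence, we may assume the earlier vector converges almost surely to its limit, so continuous convergence of $\Phi_\rho$ yields $\Phi_\rho(\cdots) \to \Phi(\cdots)$ a.s.; bounded convergence then pushes the expectation through, producing the expectation of $g$ against the joint law with the claimed product density and establishing both the weak convergence and the absolute continuity with the displayed density.

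The main obstacle is the upgrade from pointwise convergence of the conditional survival functions to the continuous convergence of $\Phi_\rho$ needed for the Skorokhod argument --- the pre-limit kernels are supported on the lattice $(1/\sqrt{\rho})\mathbb{Z}_+$, while the limit is continuous. This upgrade is handled by the joint continuity of the limit density in $(x,z)$ combined with weak convergence of the conditional laws for every approximating sequence $k/\sqrt{\rho} \to x$, which jointly give $\Phi_\rho(v_\rho) \to \Phi(v)$ for any $v_\rho \to v$. The variant of the lemma for $n \geq 2$ in which $X_0$, $\xi_0$, $x_0$ are omitted is proved by exactly the same argument applied to the reduced vector, since \eqref{eq:22} continues to govern the conditional law of $Y_n - X_{n-1}$ given $X_{n-1}$.
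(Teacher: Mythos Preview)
Your proof is correct and reaches the same conclusion as the paper, but the route is genuinely different. The paper computes the mixed distribution function
\[
G(\rho)=\pr\!\left\{\tfrac{Y_1}{\sqrt{\rho}}\leq y,\ \tfrac{X_1}{\sqrt{\rho}}\leq x,\ \tfrac{Y_2-X_1}{\sqrt{\rho}}\geq z\right\}
\]
directly, rewrites it as a single expectation involving the earlier vector together with the renewal sums $\wtil S_{\lc z\sqrt{\rho}\rc-1}$ and $\sum_j\wtil S_j$, and then handles the discontinuity of the indicator $\ind\{Y_1/\sqrt{\rho}\le y,\ X_1/\sqrt{\rho}\le x\}$ by an explicit $\eps$-sandwich with continuous functions, using the absolute continuity of the limit only at the very end to let $\eps\to 0$.

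Your approach instead isolates the one-step kernel, proves weak convergence of the conditional law of $(Y_n-X_{n-1})/\sqrt{\rho}$ along any sequence $k(\rho)/\sqrt{\rho}\to x$, upgrades this to continuous convergence of the averaged functionals $\Phi_\rho$, and then splices onto the hypothesised convergence via Skorokhod representation and bounded convergence. This is more modular: the Markov structure and the limit of the kernel are separated cleanly, and the absolute-continuity hypothesis on the earlier limit vector is in fact not needed in your argument (it is used in the paper only to ensure the $\eps$-sandwich closes). The paper's approach, on the other hand, is more self-contained and yields the limiting mixed distribution function explicitly, from which the density is read off by differentiation, so one sees the formula emerge rather than verifying it. Both arguments ultimately rest on Proposition~\ref{prop:swap} and Proposition~\ref{prop:lln}; the difference lies in whether the discontinuity is tamed by $\eps$-approximation of indicators or by the continuous-convergence/Skorokhod machinery.
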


\begin{proof} The approach employed here is to express the distribution of the extended vector in terms of the shorter vector and check the pointwise convergence to the claimed limit distribution. Because of convenience of the more compact notation, we present the proof in the particular case of $n=2$ and the components $X_0$, $\xi_0$, $x_0$ omitted. However, all the derivations translate to the general case in a straightforward  way.

So assume that
$
(Y_1, X_1)/\sqrt{\rho} \Rightarrow (\eta_1, \xi_1)
$
as $\rho \to \infty$, where $(\eta_1, \xi_1)$ is absolutely continuous. We have to check that
$
(Y_1, X_1, Y_2-X_1)/ \sqrt{\rho} \Rightarrow (\eta_1, \xi_1, \Delta_2),
$
where the density of $(\eta_1, \xi_1, \Delta_2)$ is  given by
\begin{equation} \label{eq:6}
f_{\eta_1,\, \xi_1, \, \Delta_2}(y, x, z) =
(x+z) e^{-x z - \frac{z^2}{2}} f_{\eta_1,\, \xi_1}(y, x).
\end{equation}

First, express the distribution of $(Y_1,X_1,Y_2-X_1)$ in terms of  $(Y_1, X_1)$. By \eqref{eq:22} and Proposition~\ref{prop:swap},
\begin{align*}
\pr \{ Y_2-X_1 \geq i | X_1 = k \} &= \prod_{j = k+1}^{k+i - 1} \pr \{ E(j \mu) > A \} = \prod_{j = k+1}^{k+i - 1} \ex e^{-j\mu A} \\
&= \ex e^{-\frac{\sum_{j=k+1}^{k + i - 1} j \wtil A_j}{\rho}} =\ex e^{- \frac{k \wtil S_{i-1}+\sum_{j=1}^{i -1}\wtil S_j}{\rho}}, \quad k \geq 0, \quad i \geq 1.
\end{align*}
Hence, for $y,x,z > 0$,
\begin{align*}
G(\rho) &:=\pr \left\{ \frac{Y_1}{\sqrt{\rho}} \leq y, \frac{X_1}{\sqrt{\rho}} \leq x, \frac{Y_2-X_1}{\sqrt{\rho}} \geq z \right\} \\
&= \sum_{\begin{subarray}{l}
0 \leq l \leq y \sqrt{\rho} , \\
 0 \leq k \leq x \sqrt{\rho}
 \end{subarray}}
 \pr\left\{ \frac{Y_2-X_1}{\sqrt{\rho}} \geq z | X_1 = k \right\} \pr \left\{ Y_1=l, X_1=k \right\} \\
&=\sum_{\begin{subarray}{l}
0 \leq l \leq y \sqrt{\rho}, \\
 0 \leq k \leq x \sqrt{\rho}
 \end{subarray}}
 \ex e^{-\frac{k}{\sqrt{\rho}} \frac{\wtil S_{\lc z \sqrt{\rho} \rc-1}}{\sqrt{\rho}} - \frac{\sum_{j=1}^{\lc z \sqrt{\rho} \rc-1} \wtil S_j}{\rho} } \pr \left\{ Y_1=l, X_1=k \right\} \\
&= \ex e^{-\frac{\wtil S_{\lc z \sqrt{\rho} \rc-1}}{\sqrt{\rho}} \frac{X_1}{\sqrt{\rho}}-\frac{\sum_{j=1}^{\lc z \sqrt{\rho} \rc-1} \wtil S_j}{\rho}}  \ind \left \{ \frac{Y_1}{\sqrt{\rho}} \leq y, \frac{X_1}{\sqrt{\rho}} \leq x \right \}.
\end{align*}
Now take $\rho \to \infty$. We have \[(Y_1,X_1, \wtil{S}_{\lc z \sqrt{\rho} \rc -1})/\sqrt{\rho} \Rightarrow (\eta_1, \xi_1, z)\] and, by Proposition~\ref{prop:lln}(b), $\sum_{j=1}^{\lc z \sqrt{\rho} \rc-1} \wtil S_j / \sqrt{\rho} \Rightarrow z^2/2$, but the expectation in the final expression for $G(\rho)$ is of a discontinuous function of $(Y_1,X_1, \wtil{S}_{\lc z \sqrt{\rho} \rc -1})/\sqrt{\rho}$ and $\sum_{j=1}^{\lc z \sqrt{\rho} \rc-1} \wtil S_j / \sqrt{\rho}$. Still, as shown below, 
\begin{equation} \label{eq:20}
G(\rho) \to \ex e^{-z \xi_1 - \frac{z^2}{2}} \ind\{ \eta \leq y, \xi \leq x \} 
= e^{-z^2/2} \int_{\begin{subarray}{l}
0 <v \leq y , \\
0 <u \leq x
\end{subarray}} e^{-zu} f_{\eta_1, \xi_1}(v,u) \, dv du,
\end{equation}
where differentiation of the right-most side yields the density \eqref{eq:6}.

To prove \eqref{eq:20}, we bound the final expression for $G(\rho)$ from below in two steps. First, for any $\eps> 0$,
\[
G(\rho) \geq \pr \biggl\{ \frac{ \wtil S_{\lc z \sqrt{\rho} \rc-1} } {\sqrt{\rho}}  \leq z+\eps, \frac{\sum_{j=1}^{\lc z \sqrt{\rho} \rc-1} \wtil S_j}{\rho} \leq \frac{z^2}{2}+\eps \biggr\} e^{-\left(\frac{z^2}{2}+\eps\right)}\ex e^{-(z+\eps)\frac{X_1}{\sqrt{\rho}}} \ind \left\{ \frac{Y_1}{\sqrt{\rho}} \leq y, \frac{X_1}{\sqrt{\rho}} \leq x \right\}.
\]  
Second, let $g_-^\eps \colon \nneg^2 \to \nneg$ be a continuous bounded function such that
\[
e^{-(z+\eps)u} \ind \{ v \leq y, u \leq x  \} \geq g_-^\eps(v,u) \geq e^{-(z+\eps)u} \ind \{ v \leq y -\eps, u \leq x-\eps \}, \quad v,u \in \nneg,
\]
then
\[
\liminf_{\rho \to \infty} G(\rho) \geq e^{-\left(\frac{z^2}{2}+\eps\right)} \ex g_-^\eps(\eta_1,\xi_1) \geq e^{-\left(\frac{z^2}{2}+\eps\right)} \ex e^{-(z+\eps)  \xi_1} \ind\{ \eta \leq y-\eps, \xi \leq x-\eps \}.
\]
As we let $\eps \to 0$ in the last inequality, it follows by the absolute continuity of $(\eta_1, \xi_1)$ that
$
\liminf_{\rho \to \infty} G(\rho) \geq e^{-z^2/2}\ex e^{-z \xi_1} \ind\{ \eta \leq y, \xi \leq x \}.
$
Similarly, one can also show the upper bound $\limsup_{\rho \to \infty} G(\rho) \leq e^{-z^2/2} \ex e^{-z \xi_1} \ind\{ \eta \leq y, \xi \leq x \}$.
\end{proof}

The next lemma is an auxiliary result that is used in Lemmas~\ref{lem:FL_add_X} and~\ref{lem:unique_service_completion}. It proves that, if there is a service completion  among $O(\sqrt{\rho})$ customers during an inter-arrival period, then it is the only one with high probability. Note that the probability that there is a service completion at all  is low.
 \begin{lemma} \label{prop:aux_FL}
For any $m > 0$ and $M > m$,
\[
\lim_{\rho \to \infty} \min_{m \sqrt{\rho} \leq i \leq M \sqrt{\rho}} \pr \{ E_i^1 | E_i \} = \lim_{\rho \to \infty} \max_{m \sqrt{\rho} \leq i \leq M \sqrt{\rho}} \pr \{ E_i^1 | E_i \}= 1.
\]
\end{lemma}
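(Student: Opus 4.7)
I would write $\pr\{\overline{E_i^1}\mid E_i\}=\pr\{E_i\setminus E_i^1\}/\pr\{E_i\}$ and show that this ratio tends to $0$ uniformly for $i\in[m\sqrt{\rho},M\sqrt{\rho}]$; the two-sided statement of the lemma then follows from $0\le \pr\{E_i^1\mid E_i\}\le 1$. Conditional on $A$, the count $N:=\#\{j\le i\colon E_j(\mu)<A\}$ is $\mathrm{Binomial}(i,p)$ with success probability $p=1-e^{-\mu A}\le \mu A$, so
\[
\pr\{E_i\mid A\}=1-e^{-i\mu A}, \qquad \pr\{E_i\setminus E_i^1\mid A\}=\pr\{N\ge 2\mid A\}\le \binom{i}{2}p^2\le \binom{i}{2}(\mu A)^2,
\]
the third inequality being Markov applied to $\binom{N}{2}$.

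\textbf{Lower bound on the denominator.} Using $\mu A=\wtil A/\rho$, $\sqrt{\rho}\,\pr\{E_i\}=\ex\bigl[\sqrt{\rho}(1-e^{-(i/\rho)\wtil A})\bigr]$. The integrand converges pointwise to $c\,\wtil A$ whenever $c_\rho:=i/\sqrt{\rho}\to c\in[m,M]$, and is dominated by $(i/\sqrt{\rho})\wtil A\le M\wtil A\in L^1$. Dominated convergence gives $\sqrt{\rho}\,\pr\{E_i\}\to c\,\ex\wtil A=c$; since the pre-limit is nondecreasing in $c_\rho$ (via $E_i\subset E_{i+1}$) and the limit $c\mapsto c$ is continuous on $[m,M]$, a Dini-type argument upgrades this to uniform convergence. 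In particular, $\pr\{E_i\}\ge (m/2)/\sqrt{\rho}$ for all large $\rho$ and all $i\in[m\sqrt{\rho},M\sqrt{\rho}]$.

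\textbf{Upper bound on the numerator via truncation.} The main obstacle is that the naive estimate $\ex\binom{i}{2}(\mu A)^2\le(i/\rho)^2\,\ex\wtil A^2/2$ would need a second moment of $\wtil A$ that the model does not assume. Instead, fix $\epsilon>0$ and split on $\{i\mu A\le\epsilon\}$. On this event $\binom{i}{2}(\mu A)^2\le(i\mu A)(\epsilon/2)$, so
\[
\ex\!\left[\binom{i}{2}(\mu A)^2;\,i\mu A\le\epsilon\right] \le \frac{\epsilon}{2}\,\ex[i\mu A]=\frac{\epsilon\,i}{2\rho}\le \frac{\epsilon M}{2\sqrt{\rho}}.
\]
On the complement I just bound the indicator by $1$: $\pr\{i\mu A>\epsilon\}=\pr\{\wtil A>\epsilon\rho/i\}\le\pr\{\wtil A>\epsilon\sqrt{\rho}/M\}$, and since $\ex\wtil A=1$ is finite, $T\,\pr\{\wtil A>T\}\to 0$ as $T\to\infty$, so this contribution is $o(1/\sqrt{\rho})$ uniformly in $i\le M\sqrt{\rho}$. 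Combining, $\sqrt{\rho}\,\pr\{E_i\setminus E_i^1\}\le \epsilon M/2+o(1)$ uniformly in $i\in[m\sqrt{\rho},M\sqrt{\rho}]$; dividing by the lower bound $\sqrt{\rho}\,\pr\{E_i\}\ge m/2$ yields
\[
\limsup_{\rho\to\infty}\max_{m\sqrt{\rho}\le i\le M\sqrt{\rho}}\frac{\pr\{E_i\setminus E_i^1\}}{\pr\{E_i\}}\le \frac{\epsilon M}{m},
\]
and sending $\epsilon\to 0$ completes the argument. The delicate point throughout is that both the denominator and the dominant part of the numerator are of order $1/\sqrt{\rho}$, so one must carefully track constants and cannot afford to lose any moment of $\wtil A$ beyond the first.
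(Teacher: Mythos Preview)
Your proof is correct and takes a genuinely different route from the paper. The paper computes the conditional probability exactly,
\[
\pr\{E_i^1\mid E_i\}=\ex\!\left(\frac{1-e^{-\wtil A/\rho}}{1/\rho}\,e^{-(i-1)\wtil A/\rho}\right)\!\Big/\ex\!\left(\frac{1-e^{-i\wtil A/\rho}}{i/\rho}\right),
\]
and then uses the monotonicity of $x\mapsto(1-e^{-\alpha x})/x$ (Proposition~\ref{prop:exp_fn}(a)) to replace $i$ by the endpoints $m\sqrt{\rho}$ and $M\sqrt{\rho}$, after which Proposition~\ref{prop:exp_fn}(b) sends both numerator and denominator to~$1$. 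You instead bound the complementary event $\{N\ge 2\}$ by $\binom{i}{2}(\mu A)^2$ and control numerator and denominator separately at scale $1/\sqrt{\rho}$, with a truncation on $\{i\mu A\le\eps\}$ to avoid any second-moment assumption on $\wtil A$. Both approaches need only $\ex\wtil A<\infty$; the paper's is shorter once the exact formula and Proposition~\ref{prop:exp_fn} are in hand, while yours is more self-contained and makes the first-moment constraint explicit. One minor simplification: for your denominator bound you don't need the full Dini argument---monotonicity of $i\mapsto\pr\{E_i\}$ alone gives $\pr\{E_i\}\ge\pr\{E_{\lceil m\sqrt{\rho}\rceil}\}$, and the single limit $\sqrt{\rho}\,\pr\{E_{\lceil m\sqrt{\rho}\rceil}\}\to m$ suffices.
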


\begin{proof}
By symmetry,
\begin{align*}
\pr \{ E_i^1 | E_i \} &= \frac{i \, \pr \{ E_1(\mu) < A, \min_{2 \leq j \leq i} E_j(\mu) > A \}}{\pr \{ \min_{1 \leq j \leq i} E_j(\mu) < A \}} = \frac{i\,  \ex (1 - e^{-\mu A}) e^{-(i-1)\mu A}}{\ex (1 - e^{-i \mu A})} \\
&= \frac{i \, \ex (1 - e^{-\wtil A/\rho}) e^{-(i-1)\wtil A/ \rho}}{\ex (1 - e^{-i \wtil A / \rho})} = \ex \biggl(\frac{1 - e^{-\wtil A / \rho}}{1/\rho} e^{-(i-1)\wtil A / \rho} \biggr) \Big/ \ex \biggl(\frac{1 - e^{-i \wtil A / \rho}}{i / \rho} \biggr).
\end{align*}
Then, by Proposition~\ref{prop:exp_fn}(a),
\[
\pr \{ E_i^1 | E_i\} \geq \ex \biggl( \frac{1 - e^{-\wtil A / \rho}}{1/\rho} e^{-M \wtil A / \sqrt{\rho}} \biggr) \Big/ \ex \biggl( \frac{1 - e^{-m \wtil A / \sqrt{\rho}}}{m / \sqrt{\rho}} \biggr) , \quad m \sqrt{\rho} \leq i \leq M \sqrt{\rho},
\]
and by Proposition~\ref{prop:exp_fn}(b),
$
\liminf_{\rho \to \infty} \min_{m \sqrt{\rho} \leq i \leq M \sqrt{\rho}} \pr \{ E_i^1 | E_i \} \geq 1.
$

Similarly, $\limsup_{\rho \to \infty} \max_{m \sqrt{\rho} \leq i \leq M \sqrt{\rho}} \pr \{ E_i^1 | E_i \} \leq 1$.
\end{proof}

Since, in a queue of size $O(\sqrt{\rho})$, there is at most a single service completion during an inter-arrival period with high probability, the post-departure levels $X_l$ are essentially uniform samples from the corresponding pre-departure levels $Y_l$. This property is inherited by the fluid limit in the form of~\eqref{eq:13}, as implied by the next lemma.

\begin{lemma} \label{lem:FL_add_X} Suppose, for an $n \geq 1$,
\[
\left(X_{l-1}, Y_l \right)_{l=1}^n/ \sqrt{\rho} \Rightarrow (\xi_{l-1}, \eta_l)_{l=1}^n
\]
jointly as $\rho \to \infty$ and the limit vector  is absolutely continuous with a density \[f_{(\xi_{l-1}, \eta_l)_{l=1}^n} \left( (x_{l-1},y_l)_{l=1}^n \right).\] Then the joint weak convergence
\[
\left( (X_{l-1}, Y_l)_{l=1}^n ,X_n \right) / \sqrt{\rho} \Rightarrow \left((\xi_{l-1}, \eta_l)_{l=1}^n, \xi_n \right)
\]
holds as well, where the limit vector is absolutely continuous with the density
\begin{equation} \label{eq:density_add_X}
f_{(\xi_{l-1},\eta_l)_{l=1}^n, \xi_n}\left((x_{l-1}, y_l)_{l=1}^n, x_n \right)
= \  \frac{1}{y_n} f_{(\xi_{i-1},\eta_i)_{i=1}^n}\left((x_{l-1},y_l)_{l=1}^n\right) \, \ind\{ 0 < x_n < y_n \}.
\end{equation}
The lemma also holds with the components $X_0$, $\xi_0$, $x_0$ omitted throughout the statement.
\end{lemma}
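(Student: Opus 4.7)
The plan is to combine the Markov structure of the chain $X_0, (Y_l, X_l)_{l \geq 1}$ with the ``exactly one service completion'' property implied by Lemma~\ref{prop:aux_FL}. By the strong Markov property (with respect to the arrival epochs and the i.i.d.\ exponential service times), the conditional distribution of $X_n$ given the entire past $(X_{l-1}, Y_l)_{l=1}^n$ depends only on $Y_n$. First I would observe that, under either FIFO- or LIFO-batch departures, symmetry among the $i$ customers at the start of a busy inter-arrival period yields
$$\pr\{X_n = k \mid Y_n = i,\, E_i^1\} = \frac{1}{i}, \qquad 0 \leq k \leq i-1,$$
because conditional on a unique service completion, the index of the completing customer is uniform on $\{1,\ldots,i\}$, and in both disciplines this translates to a uniform post-departure queue size. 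Combined with \eqref{eq:23} this gives
$$\pr\{X_n = k \mid Y_n = i\} = \frac{\pr\{E_i^1 \mid E_i\}}{i} + r_i(k), \qquad 0 \leq r_i(k),\ \sum_{k=0}^{i-1} r_i(k) = 1-\pr\{E_i^1 \mid E_i\},$$
so by Lemma~\ref{prop:aux_FL} the conditional law of $X_n$ given $Y_n = i$ is within total variation $o(1)$ of the uniform distribution on $\{0,\ldots,i-1\}$, uniformly for $i \in [m\sqrt{\rho}, M\sqrt{\rho}]$.

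Second, to deduce the joint weak convergence with density \eqref{eq:density_add_X}, I would test against an arbitrary bounded continuous $g \colon \nneg^{2n+1} \to \real$ and use the tower property together with the Markov structure:
$$\ex g\!\left(\frac{(X_{l-1},Y_l)_{l=1}^n,\,X_n}{\sqrt{\rho}}\right) = \ex \sum_{k=0}^{Y_n-1} g\!\left(\frac{(X_{l-1},Y_l)_{l=1}^n}{\sqrt{\rho}},\,\frac{k}{\sqrt{\rho}}\right) \pr\{X_n=k \mid Y_n\}.$$
Fix $\eps>0$; by absolute continuity of $\eta_n$, choose $0<m<M$ with $\pr\{\eta_n \notin [m,M]\}<\eps$, and by the hypothesized weak convergence together with the Portmanteau theorem, the contribution of $\{Y_n/\sqrt{\rho} \notin [m,M]\}$ to the above expectation is $O(\eps \|g\|_\infty)$. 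On the complementary event, the previous step replaces $\pr\{X_n=k \mid Y_n=i\}$ by $1/i$ at a cost $o(1)\|g\|_\infty$, reducing the inner sum to the Riemann sum $(Y_n)^{-1}\sum_{k=0}^{Y_n-1} g(\,\cdot\,, k/\sqrt{\rho})$, which is uniformly (in $Y_n/\sqrt{\rho}\in[m,M]$) close to the integral $(Y_n/\sqrt{\rho})^{-1}\int_0^{Y_n/\sqrt{\rho}} g(\,\cdot\,, u)\,du$ by uniform continuity of $g$ on compacts.

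Finally, the map $(x_0,(y_l,x_l)_{l=1}^n) \mapsto \ind\{y_n\in[m,M]\}\,y_n^{-1}\int_0^{y_n} g((x_{l-1},y_l)_{l=1}^n, u)\,du$ is bounded and continuous off a set of measure zero under the (absolutely continuous) limit law, so the hypothesis $(X_{l-1},Y_l)_{l=1}^n/\sqrt{\rho}\Rightarrow(\xi_{l-1},\eta_l)_{l=1}^n$ together with the continuous mapping theorem yields convergence of the truncated expectation to $\ex\bigl[\ind\{\eta_n\in[m,M]\}\,\eta_n^{-1}\int_0^{\eta_n} g((\xi_{l-1},\eta_l)_{l=1}^n, u)\,du\bigr]$; sending $\eps\to 0$ removes the truncation and identifies the limit with integration against the density \eqref{eq:density_add_X}. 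The main obstacle is the \emph{uniform} replacement of $\pr\{X_n=k \mid Y_n=i\}$ by $1/i$ in the Riemann sum, which is precisely what Lemma~\ref{prop:aux_FL} was designed to supply; once this is in hand the argument proceeds by continuous-mapping boilerplate and the case with $X_0, \xi_0$ omitted is identical.
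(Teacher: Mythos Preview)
Your approach is correct and essentially the paper's: both reduce via the Markov property to the conditional law of $X_n$ given $Y_n=i$, use symmetry to get the uniform law on the event $E_i^1$, and invoke Lemma~\ref{prop:aux_FL} to dispose of $\overline{E_i^1}$; the paper carries this out through pointwise convergence of the joint distribution function (bounding $(1-k/i)\pr\{E_i^1\mid E_i\}\le \pr\{\overline{E_k}\mid E_i\}\le (1-k/i)+\pr\{\overline{E_i^1}\mid E_i\}$) rather than through test functions, but the substance is identical. One small gap to patch: your Riemann-sum step needs the approximation to be uniform so it survives the expectation, but you only truncate $Y_n/\sqrt{\rho}$ to $[m,M]$ while the other coordinates of the argument of $g$ remain unbounded, so ``uniform continuity of $g$ on compacts'' does not directly apply; either restrict to uniformly continuous (e.g.\ Lipschitz or compactly supported) $g$, which already determine weak convergence, or use tightness of $(X_{l-1},Y_l)_{l=1}^n/\sqrt{\rho}$ to truncate all coordinates simultaneously.
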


\begin{proof} This proof follows the same approach as the proof of  Lemma~\ref{lem:FL_add_delta}, and again, to keep the notation compact, we treat a particular case --- of $n = 1$ and the components $X_0$, $\xi_0$, $x_0$ included. With minor changes, this proof extends to the general case. %The proof below fully illustrates the techniques --- which is sufficient to fully illustrate the techniques used. The other cases Because of the Markov property of the sequence $X_0$, $(Y_l,X_l)$, $l \geq 1$, the proof below adjusts to the general case as the probabilities $\pr \left\{ X_0 = k', Y_1=i \right\}$ are replaced by the probabilities $\pr \{ \left(X_{l-1}, Y_l \right)_{l=1}^n = (k_l,i_l)_{l=1}^n \}$.

By \eqref{eq:23}, for $x'> 0$, $y>x'$ and $x \in (0, y)$,
\begin{align*}
\pr \left\{ \frac{X_0}{\sqrt{\rho}} \leq x', \frac{Y_1}{\sqrt{\rho}} \leq y, \frac{X_1}{\sqrt{\rho}} \geq x \right\} &= \sum_{\begin{subarray}{l}
0 \leq k' \leq x'\sqrt{\rho}, \\
 x \sqrt{\rho}+1 \leq i \leq y \sqrt{\rho}
 \end{subarray}}
 \pr\left\{ \frac{X_1}{\sqrt{\rho}} \geq x | Y_1 = i \right\} \pr \left\{ X_0 = k', Y_1=i \right\} \\
&=\sum_{\begin{subarray}{l}
0 \leq k' \leq x' \sqrt{\rho}, \\
 x \sqrt{\rho}+1 \leq i \leq y \sqrt{\rho}
 \end{subarray}}
\pr\{ \overline{E_{\lc x \sqrt{\rho} \rc}} | E_i\}\pr \left\{ X_0 = k', Y_1=i \right\}.
\end{align*}

Note that, by symmetry, $\pr \{ \overline{E_k} | E_i^1 \} = 1-k/i$, $1 \leq k \leq i$, and hence
\[
\pr \{ \overline{E_k} | E_i \} =  \left( 1-\frac{k}{i} \right) \pr \{ E_i^1 | E_i \} + \pr \{ \overline{E_k} | E_i \cap \overline{E_i^1} \} \pr \{ \overline{E_i^1} | E_i \}.
\]
As we employ the bounds
$
(1-k/i) \pr \{ E_i^1 | E_i \} \leq \pr \{ \overline{E_k} | E_i \} \leq  (1-k/i) + (1-\pr\{ E_i^1 | E_i \}),
$
it follows by Lemma~\ref{prop:aux_FL} that
\begin{align*}
\pr \left\{ \frac{X_0}{\sqrt{\rho}} \leq x', \frac{Y_1}{\sqrt{\rho}} \leq y, \frac{X_1}{\sqrt{\rho}} \geq x \right\} &\sim \sum_{\begin{subarray}{l}
0 \leq k' \leq x'\sqrt{\rho}, \\
 x \sqrt{\rho} \leq i \leq y \sqrt{\rho}
 \end{subarray}} \left( 1- \frac{x \sqrt{\rho}}{i} \right) \pr \left\{ X_0 = k', Y_1=i \right\} \\
 &= \ex \left( 1 - \frac{x}{Y_1/\sqrt{\rho}} \right) \ind \left\{ \frac{X_0}{\sqrt{\rho}} \leq x', x \leq \frac{Y_1}{\sqrt{\rho}} \leq y \right\}.
\end{align*}

Then, following the lines of the part of the proof of Lemma~\ref{lem:FL_add_delta} that estimates the limit of $G(\rho)$, one can show that, by the absolute continuity of $(\xi_0, \eta_1)$, % in the same way as, in the proof of Lemma~\ref{lem:FL_add_delta}, the limit of $G(\rho)$ is estimated, one can show, using the absolute continuity of $(\xi_0, \eta_1)$, that
\begin{align*}
\pr \left\{ \frac{X_0}{\sqrt{\rho}} \leq x', \frac{Y_1}{\sqrt{\rho}} \leq y, \frac{X_1}{\sqrt{\rho}} \geq x \right\} &\to \ex \left( 1 -  \frac{x}{\eta_1} \right) \ind\{ \xi_0 \leq x', x < \eta_1 \leq y \} \\
&= \int_{\begin{subarray}{l}
0 <u \leq x' , \\
x <v \leq y
\end{subarray}} \left(1- \frac{x}{v} \right) f_{\xi_0,\eta_1}(u,v) \, du dv,
\end{align*}
where differentiation of the limit expression produces the density~\eqref{eq:density_add_X} for $n=1$.
\end{proof}

Now, having proved Lemmas~\ref{lem:FL_add_delta} and \ref{lem:FL_add_X}, we are in a position to conclude that the post-/pre-departure queue levels and departure instants weakly converge to the post-/pre-jump levels and jump instants of the claimed fluid limit.
\begin{lemma} \label{lem:joint_MC_FL}
Under the assumption of Theorem~\ref{th:process_FL}, for any $n \geq 1$,
$
 (X_{l-1},Y_l, d_l)_{l=1}^n / \sqrt{\rho} \Rightarrow (\overline{Q}(\tau_{l-1}), \overline{Q}(\tau_l-),\tau_l)_{l=1}^n
$ jointly as $\rho \to \infty$.
\end{lemma}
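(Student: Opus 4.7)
The plan is to prove this by induction on $n$, first establishing joint convergence of the post- and pre-departure queue levels by alternating applications of Lemmas \ref{lem:FL_add_delta} and \ref{lem:FL_add_X}, and then recovering the convergence of the departure instants $d_l$ via a renewal-theoretic time-change argument.

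First I would drop the $d_l$'s and show inductively that
\[
(X_0, (Y_l, X_l)_{l=1}^n)/\sqrt{\rho} \ \Rightarrow\ (\xi_0, (\overline{Q}(\tau_l-), \overline{Q}(\tau_l))_{l=1}^n),
\]
with the limit being the Markov chain described by \eqref{eq:12}--\eqref{eq:13}. The base case $n=0$ is the hypothesis of Theorem \ref{th:process_FL}. In the inductive step, Lemma \ref{lem:FL_add_delta} extends the convergence to include $Y_{n+1}-X_n$, and hence $Y_{n+1}$; the change of variable $y=x+z$ turns the conditional density $(x+z)\,e^{-xz-z^2/2}$ supplied by that lemma into $y\,e^{(x^2-y^2)/2}$ on $\{y>x\}$, which is exactly the conditional density implied by \eqref{eq:12}. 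Lemma \ref{lem:FL_add_X} is then applied to extend the convergence to $X_{n+1}$, giving the conditional density $1/y$ on $(0,y)$ required by \eqref{eq:13}. The absolute-continuity assumption needed at each step propagates because, conditional on $\xi_0$, the newly introduced fluid-limit transition densities are genuinely absolutely continuous; the deterministic case of $\xi_0$ is handled by conditioning on $\xi_0=c$, equivalently via the ``components $X_0,\xi_0,x_0$ omitted'' variants of the two lemmas.

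Second I recover the departure instants. Between $S_{N_{l-1}}$ and $S_{N_l-1}$ there are no departures, so each arrival in this range increments $Q$ by one; the resulting bookkeeping $Y_l=X_{l-1}+N_l-N_{l-1}$ gives
\[
\frac{N_l}{\sqrt{\rho}} = \sum_{i=1}^l \frac{Y_i-X_{i-1}}{\sqrt{\rho}} \ \Rightarrow\ \sum_{i=1}^l \bigl(\overline{Q}(\tau_i-)-\overline{Q}(\tau_{i-1})\bigr) = \lambda\tau_l.
\]
Combining this with the strong law $S_n/n\to 1/\lambda$ for the renewal arrival process and the fact that $N_l\to\infty$ in probability, Slutsky's theorem yields $S_{N_l}/\sqrt{\rho}\Rightarrow\tau_l$ jointly in $l$, and the same holds for $S_{N_l-1}/\sqrt{\rho}$ since $A_{N_l}/\sqrt{\rho}\Rightarrow 0$. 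The sandwich $S_{N_l-1}<d_l<S_{N_l}$ then delivers $d_l/\sqrt{\rho}\Rightarrow\tau_l$ jointly with the $(X,Y)$ coordinates already identified in the first step.

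The main obstacle I anticipate is the bookkeeping of absolute continuity through the induction: the full joint distribution of $(\xi_0,(\eta_l,\xi_l)_{l\ge 1})$ fails to be absolutely continuous on all coordinates when $\xi_0$ is deterministic, so the hypotheses of Lemmas \ref{lem:FL_add_delta} and \ref{lem:FL_add_X} must be interpreted conditionally on $\xi_0$. This is precisely where the ``components omitted'' variants of those lemmas become essential, and it is the only genuinely delicate point in the argument.
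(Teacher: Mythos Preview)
Your proposal is correct and matches the paper's own proof essentially line for line: inductive application of Lemmas~\ref{lem:FL_add_delta} and~\ref{lem:FL_add_X} (with the ``components omitted'' variant launching the induction when $\xi_0$ is deterministic), followed by the identity $N_l-N_{l-1}=Y_l-X_{l-1}$ and the sandwich $S_{N_l-1}<d_l<S_{N_l}$ combined with the renewal law of large numbers. The only cosmetic discrepancy is that $N_1=Y_1-X_0+1$ rather than $Y_1-X_0$, but the extra $1/\sqrt{\rho}$ is of course harmless.
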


\begin{proof}
The joint convergence
\begin{equation} \label{eq:24}
\frac{1}{\sqrt{\rho}} (X_{l-1},Y_l)_{l=1}^n \Rightarrow (\overline{Q}(\tau_{l-1}), \overline{Q}(\tau_l-))_{l=1}^n
\end{equation}
 follows by inductive application of Lemmas~\ref{lem:FL_add_delta} and \ref{lem:FL_add_X} to the basis $X_0/\sqrt{\rho} \to \xi_0$ in case $\xi_0$ is absolutely continuous and the basis $Y_1/\sqrt{\rho} \Rightarrow \eta_1$ in case $\xi_0$ is deterministic. In the latter case, $\eta_1$ is absolutely continuous, which can be shown following the lines of the proof of  Lemma~\ref{lem:FL_add_delta}.

Next note that $N_1 = Y_1-X_0+1$ and $N_l-N_{l-1} = Y_l - X_{l-1}$, $l \geq 2$. Since $S_{N_l-1} < d_l < S_{N_l}$, it follows by the continuous mapping theorem that, jointly with~\eqref{eq:24}
\[
\frac{1}{\sqrt{\rho}}(d_l)_{l=1}^n \Rightarrow \frac{1}{\lambda} \left(\sum_{j=1}^l \overline{Q}(\tau_j-) - \overline{Q}(\tau_{j-1}) \right)_{l=1}^n = (\tau_l)_{l=1}^n. \qedhere
\]
\end{proof}

Finally, we show that, each inter-arrival period where departures occur contains a single departure instant only with high probability. This guarantees that, in the limit regime, between the departure instants $d_l$, there are no other departure instants. We use this fact in the proof of convergence to the fluid limit in Section~\ref{sec:tightness}. 

\begin{lemma} \label{lem:unique_service_completion} Under the assumption of Theorem~\ref{th:process_FL}, for any $l \geq 1$, $\pr \{ D_l \} \to 1$ as $\rho \to \infty$.
\end{lemma}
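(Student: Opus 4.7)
The strategy is to couple this result to the uniform single-service-completion estimate in Lemma~\ref{prop:aux_FL} via the convergence $Y_l/\sqrt{\rho}\Rightarrow \overline{Q}(\tau_l-)$ already established in Lemma~\ref{lem:joint_MC_FL}. The density computation in the proof of Lemma~\ref{lem:FL_add_delta} shows that $\overline{Q}(\tau_l-)$ is absolutely continuous with density supported on $(\overline{Q}(\tau_{l-1}),\infty)\subseteq (0,\infty)$, so it is almost surely finite and strictly positive. Fixing $\eps>0$, I will therefore pick constants $0<m<M<\infty$ with $\pr\{m\le\overline{Q}(\tau_l-)\le M\}\ge 1-\eps$; by Portmanteau and absolute continuity this yields
\[
\pr\{m\sqrt{\rho}\le Y_l\le M\sqrt{\rho}\}\ge 1-2\eps
\]
for all sufficiently large $\rho$.

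The crucial pointwise step is the bound
\[
\pr\{D_l\mid Y_l=i\}\ge \pr\{E^1_i\mid E_i\}.
\]
To prove it I plan to condition on the history up to $S_{N_l-1}$ and exploit the memorylessness of the exponential service times together with the renewal structure. Given that history and $\{Y_l=i\}$, the residual service times of the $i$ customers present at $S_{N_l-1}$ are i.i.d.\ copies of $E(\mu)$, the next inter-arrival time $A=S_{N_l}-S_{N_l-1}$ is a fresh independent copy of the generic inter-arrival time, and the only further constraint imposed by $\{Y_l=i\}$ on this fresh data is $E_i$, since $(S_{N_l-1},S_{N_l})$ contains a departure precisely when at least one of these service times falls below $A$. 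On the sub-event $E^1_i$ exactly one of them does, so the corresponding single service completion produces a single batch under either the FIFO- or the LIFO-batch discipline, and since every other residual service time exceeds $A$ no further departure can occur before $S_{N_l}$. Hence $E^1_i\cap\{Y_l=i\}\subseteq D_l$, which gives the displayed bound.

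Putting the two ingredients together,
\[
\pr\{D_l\}\ \ge\ \Bigl(\min_{m\sqrt{\rho}\le i\le M\sqrt{\rho}}\pr\{E^1_i\mid E_i\}\Bigr)\,\pr\{m\sqrt{\rho}\le Y_l\le M\sqrt{\rho}\},
\]
and Lemma~\ref{prop:aux_FL} forces the minimum to tend to $1$ as $\rho\to\infty$, so $\liminf_{\rho\to\infty}\pr\{D_l\}\ge 1-2\eps$. Sending $\eps\downarrow 0$ yields the claim.

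The main obstacle — and essentially the only non-routine point — is the conditional identification underlying the pointwise bound: one must articulate cleanly that conditioning on $Y_l=i$ together with the entire pre-$S_{N_l-1}$ history leaves the $i$ residual service times and the fresh inter-arrival time with exactly the joint law used to define $E_i^1$ and $E_i$ in Lemma~\ref{prop:aux_FL}, modulo the single conditioning event $E_i$. This follows from the strong Markov property at $S_{N_l-1}$ and the exponential memorylessness, but deserves to be written out carefully; once it is, the rest of the argument is a standard weak-convergence bookkeeping.
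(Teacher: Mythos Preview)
Your proposal is correct and follows essentially the same route as the paper: establish the pointwise bound $\pr\{D_l\mid Y_l=i\}\ge\pr\{E_i^1\mid E_i\}$, combine it with $\pr\{m\sqrt{\rho}\le Y_l\le M\sqrt{\rho}\}$, and invoke Lemmas~\ref{prop:aux_FL} and~\ref{lem:joint_MC_FL}. The only stylistic difference is in the justification of the pointwise bound: you argue via the strong Markov property at $S_{N_l-1}$ and memorylessness, whereas the paper writes out $\pr\{Y_l=i,D_l\}$ and $\pr\{Y_l=i\}$ explicitly as sums over $X_{l-1}=k$ (using formula~\eqref{eq:21}) and observes that the common factor cancels to leave $\pr\{E_i^1\}/\pr\{E_i\}$; both arguments encode the same fact.
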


\begin{proof}
Note that $\pr\{ D_l | Y_l = i \} \geq  \pr \{ E_i^1 | E_i \}$, $l \geq 1$, $i \geq 1$. Indeed, if $l \geq 2$, this follows by
\begin{align*}
\pr\{ Y_l = i, D_l \}  &\geq \sum_{k=0}^{i-1} \pr \{ X_{l-1} = k \} \prod_{j = k+1}^{i-1} \pr \{ E(j \mu) > A \} \pr\{ E_i^1 \} , \\
\pr\{ Y_l = i \}  &= \sum_{k=0}^{i-1}  \pr \{ X_{l-1} = k \} \prod_{j = k+1}^{i-1} \pr \{ E(j \mu) > A \} \pr\{ E_i \},
\end{align*}
and by similar formulas if $l=1$.

Then, for any $m > 0$, $M>m$,
$
\pr \{ D_l \} \geq \min_{m \sqrt{\rho} \leq i \leq M \sqrt{\rho}} \pr \{ E_i^1 | E_i \} \pr\{ m \sqrt{\rho} \leq Y_l \leq M \sqrt{\rho} \},
$
and by Lemmas~\ref{prop:aux_FL} and~\ref{lem:joint_MC_FL},
$
\liminf_{\rho \to \infty} \pr \{ D_l \} \geq \pr\{ m < \eta_l < M \},
$
where $\eta_l$ is absolutely continuous. Finally, as $m \to 0$ and $M \to \infty$, the claim of the lemma follows.
\end{proof}

\subsection{Convergence to the fluid limit} \label{sec:tightness}
In this section, we prove that the family of the scaled queue-length processes $\overline{Q}^\rho(\cdot)$ is relatively compact (in Lemma~\ref{lem:tightness}), and that their finite-dimensional distributions converge to those of the claimed limit process $\overline{Q}(\cdot)$ (in Lemma~\ref{lem:fdd}). These are the ingredients of weak convergence in the Skorokhod space $\mathbf{D}$.

\begin{lemma} \label{lem:fdd}
Under the assumption of Theorem~\ref{th:process_FL}, for any $n \geq 1$ and partition $0 \leq t_1 < t_2 < \ldots < t_n$,
$
\left( \overline{Q}^\rho(t_j) \right)_{j=1}^n \Rightarrow \left( \overline{Q}(t_j) \right)_{j=1}^n
$
jointly as $\rho \to \infty$.
\end{lemma}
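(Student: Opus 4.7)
The plan is to combine the embedded-Markov-chain convergence in Lemma~\ref{lem:joint_MC_FL} with the functional law of large numbers for the renewal arrival process, exploiting Lemma~\ref{lem:unique_service_completion} to rule out extra departures in the limit. Between consecutive departure instants $d_{l-1}$ and $d_l$, on the single-departure event $D_l$, the queue-length process $Q(\cdot)$ is affected only by arrivals, so for $t\in[d_{l-1},d_l)$
\[
Q(t) \;=\; X_{l-1} + \#\bigl\{i\geq 1\colon d_{l-1} < S_i \leq t\bigr\};
\]
this is the pre-limit counterpart of the linear growth $\overline{Q}(t)=\overline{Q}(\tau_{l-1})+\lambda(t-\tau_{l-1})$ of the fluid limit between jumps, so the proof essentially reduces to identifying the correct ``current'' interval in both the pre-limit and the limit.

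Concretely, I would first fix the partition $0\leq t_1<\ldots<t_n$ and, given arbitrary $\varepsilon>0$, use non-explosiveness (Remark~\ref{rem:nonexplosive}) to choose $n_0$ with $\pr\{\tau_{n_0}\leq t_n\}<\varepsilon$. Lemma~\ref{lem:joint_MC_FL} then yields
\[
\tfrac{1}{\sqrt{\rho}}\bigl(X_{l-1},d_l\bigr)_{l=1}^{n_0} \;\Rightarrow\; \bigl(\overline{Q}(\tau_{l-1}),\tau_l\bigr)_{l=1}^{n_0},
\]
and Lemma~\ref{lem:unique_service_completion} together with a union bound gives $\pr\{\bigcap_{l=1}^{n_0}D_l\}\to 1$. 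I would realize these convergences almost surely on a single probability space via Skorokhod's representation, jointly with the uniform-on-compacts functional strong law of large numbers for the renewal counting process $R(t):=\#\{i\geq 1\colon S_i\leq t\}$, namely $R(\sqrt{\rho}\,s)/\sqrt{\rho}\to\lambda s$ uniformly on compact $s$-intervals (a standard consequence of the elementary renewal theorem).

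On the good event $G_\rho:=\{\tau_{n_0}>t_n\}\cap\bigcap_{l=1}^{n_0}D_l$, each $t_j$ falls into a unique interval $[\tau_{\ell_j-1},\tau_{\ell_j})$, and the absolute continuity of the jump times $\tau_l$ (inherited from Lemma~\ref{lem:FL_add_delta}) discards the bad null event $\{t_j=\tau_l\}$. The a.s.\ convergence $d_l/\sqrt{\rho}\to\tau_l$ then forces the pre-limit index $L_j^\rho$ defined by $d_{L_j^\rho-1}\leq\sqrt{\rho}\,t_j<d_{L_j^\rho}$ to equal $\ell_j$ for all sufficiently large $\rho$, and the displayed identity above gives
\[
\overline{Q}^\rho(t_j) \;=\; \frac{X_{L_j^\rho-1}}{\sqrt{\rho}} + \frac{R(\sqrt{\rho}\,t_j)-R(d_{L_j^\rho-1})}{\sqrt{\rho}} \;\longrightarrow\; \overline{Q}(\tau_{\ell_j-1})+\lambda(t_j-\tau_{\ell_j-1}) \;=\; \overline{Q}(t_j),
\]
jointly in $j$, where the convergence of the second term uses the uniform-on-compacts FLLN evaluated at the random but convergent endpoints $d_{L_j^\rho-1}/\sqrt{\rho}$. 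Sending $\varepsilon\to 0$ completes the argument.

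The main obstacle will be the random index $L_j^\rho$: the map from the increasing sequence of jump times to the interval containing the fixed deterministic time $t_j$ is discontinuous precisely at $\{t_j=\tau_l\}$, so the absolute continuity of the $\tau_l$'s must be used to discard this null event. A secondary technical point is aligning the renewal count at the random endpoints $d_{L_j^\rho-1}/\sqrt{\rho}$, which is why uniform (rather than pointwise) convergence of $R(\sqrt{\rho}\,\cdot)/\sqrt{\rho}$ is needed; both issues are standard once the convergences are coupled on a common probability space.
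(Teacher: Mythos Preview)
Your proposal is correct and uses exactly the same ingredients as the paper: Lemma~\ref{lem:joint_MC_FL} for the embedded chain, Lemma~\ref{lem:unique_service_completion} for single departures, non-explosiveness (Remark~\ref{rem:nonexplosive}) for the truncation in $K$ (your $n_0$), the functional law of large numbers for the renewal counts, and absolute continuity of the $\tau_l$'s to discard the boundary event $\{t_j=\tau_l\}$. The only difference is the technical wrapper: the paper decomposes $\pr\{\overline{Q}^\rho(t_j)\leq x_j,\,1\leq j\leq n\}$ as an explicit sum over index tuples $(l_j)$, sandwiches each term between expressions involving $B^1_{(l_j)}$ and $\overline{D_l}$, and passes to the limit in the CDF directly, whereas you invoke Skorokhod's representation to upgrade to almost-sure convergence and then read off the limiting index $L_j^\rho=\ell_j$ pointwise. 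Both routes are standard; the paper's avoids the bookkeeping of coupling the renewal process, the embedded chain, and the indicators $\mathbf{1}_{D_l}$ on a common space, while yours is a bit more conceptual once that coupling is set up.
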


\begin{lemma} \label{lem:tightness}
Under the assumption of Theorem~\ref{th:process_FL}, the family of the processes $\overline{Q}^\rho(\cdot)$, $\rho > 0$, is relatively compact.
\end{lemma}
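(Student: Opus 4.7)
The plan is to verify the two sufficient conditions of Ethier and Kurtz~\cite{EthierKurtz} for relative compactness in the Skorokhod space $\mathbf{D}$: a compact-containment condition on each interval $[0,T]$, and control of the Skorokhod modulus $w'(\cdot,\delta,T)$ as $\delta\to 0$. Both will be reduced, via the joint convergence in Lemma~\ref{lem:joint_MC_FL}, to facts already known about the candidate fluid limit $\overline{Q}(\cdot)$.

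For compact containment, fix $T>0$ and $\eta>0$. Using the non-explosiveness from Remark~\ref{rem:nonexplosive} I pick $N$ with $\pr\{\tau_N\leq T\}<\eta/2$, and then, since each $\overline{Q}(\tau_l-)$ is a.s.\ finite and $X_0/\sqrt{\rho}\Rightarrow\xi_0$, I pick $M$ large enough that
\[
\pr\bigl\{(X_0/\sqrt{\rho})\vee\max_{l\leq N}(Y_l/\sqrt{\rho})>M\bigr\}<\eta/2
\]
for all $\rho$ sufficiently large (possible by Lemma~\ref{lem:joint_MC_FL}). On the intersection of this event with $\{d_N/\sqrt{\rho}>T\}$ one has $\sup_{t\in[0,T]}\overline{Q}^\rho(t)\leq M$, because on each closed interval $[d_{l-1}/\sqrt{\rho},d_l/\sqrt{\rho}]$ the queue takes values only in $[0,Y_l]$ (it grows by arrivals up to $Y_l$ and then drops at departures), and $[0,T]\subset\bigcup_{l\leq N}[d_{l-1}/\sqrt{\rho},d_l/\sqrt{\rho}]$ whenever $d_N/\sqrt{\rho}>T$.

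For oscillation control, fix $\eta>0$ and $\epsilon>0$. I choose $N$ with $\pr\{\tau_N\leq T\}<\epsilon/2$ and exploit the clean piecewise structure that prevails on the event $A_\rho:=\{d_N/\sqrt{\rho}>T\}\cap\bigcap_{l\leq N}D_l$, which by Lemmas~\ref{lem:joint_MC_FL} and~\ref{lem:unique_service_completion} has probability exceeding $1-\epsilon$ for $\rho$ large. On $A_\rho$ the trajectory of $\overline{Q}^\rho$ on $[0,T]$ consists of inter-departure segments on each of which the process first increases monotonically by arrival-jumps of size $1/\sqrt{\rho}$ from $X_{l-1}/\sqrt{\rho}$ to $Y_l/\sqrt{\rho}$ and then stays flat until the single departure at $d_l/\sqrt{\rho}$. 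I form the partition of $[0,T]$ with break-points $\{0,d_1/\sqrt{\rho},\ldots,d_L/\sqrt{\rho},T\}$, where $L$ counts the scaled-departures in $[0,T]$, and further subdivide any piece of length exceeding $\eta/(2\lambda)$ into sub-pieces of length in $[\eta/(4\lambda),\eta/(2\lambda)]$. By the renewal strong law applied over the (bounded number of) resulting pieces, the oscillation of $\overline{Q}^\rho$ on each piece is at most $\lambda\cdot(\text{piece length})+o(1)\leq\eta$ for $\rho$ large.

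The remaining point, which I expect to be the main obstacle, is ensuring all pieces have length exceeding $\delta$ so that the partition is admissible in the definition of $w'(\overline{Q}^\rho,\delta,T)$. Subdivision-pieces have length at least $\eta/(4\lambda)$ by construction, so for $\delta<\eta/(4\lambda)$ only the un-subdivided inter-departure intervals $[d_{l-1}/\sqrt{\rho},d_l/\sqrt{\rho})$ and the boundary pieces $[0,d_1/\sqrt{\rho})$, $[d_L/\sqrt{\rho},T]$ require checking. By Lemma~\ref{lem:FL_add_delta} the inter-jump gaps $\tau_l-\tau_{l-1}$ of the fluid limit have continuous distributions with no atom at zero, and since $\overline{Q}$ has only countably many jumps the value $T$ is not an atom of any $\tau_l$; combining this with Lemma~\ref{lem:joint_MC_FL} yields
\[
\lim_{\delta\to 0}\limsup_{\rho\to\infty}\pr\bigl\{\text{some piece of the partition has length}\leq\delta\bigr\}=0.
\]
Assembling these estimates gives $\lim_{\delta\to 0}\limsup_{\rho\to\infty}\pr\{w'(\overline{Q}^\rho,\delta,T)>\eta\}\leq\epsilon$, and since $\epsilon>0$ was arbitrary the required oscillation condition follows, completing the verification.
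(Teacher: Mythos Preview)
Your proof is correct and follows essentially the same route as the paper: verify the Ethier--Kurtz criteria by (i) compact containment and (ii) oscillation control via a partition whose breakpoints include the scaled departure times $d_l/\sqrt{\rho}$, so that on each piece the process moves only by arrivals and its oscillation is controlled by that of the scaled arrival process.

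Two minor points where the paper is crisper. First, your ``renewal strong law applied over the (bounded number of) resulting pieces'' is informal because the pieces are random; the clean way to phrase it (as the paper does) is to bound the oscillation on every piece by the single quantity $\omega(\overline{A}^\rho,\,\eta/(2\lambda),\,T)$ and then use $\overline{A}^\rho(\cdot)\Rightarrow\lambda\cdot$ together with continuity of $\omega(\cdot,\delta',T')$ at continuous functions. Second, the justification ``since $\overline{Q}$ has only countably many jumps the value $T$ is not an atom of any $\tau_l$'' conflates a pathwise statement with a distributional one; what you actually need (and what follows from Lemma~\ref{lem:FL_add_delta}) is that each $\tau_l$ is absolutely continuous, hence $\pr\{\tau_l=T\}=0$. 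The paper sidesteps the boundary-piece issue altogether by extending the partition to $d_K/\sqrt{\rho}\geq T$ rather than stopping at $T$, and its compact-containment step is also simpler: just $\overline{Q}^\rho(T)\leq\overline{Q}^\rho(0)+\overline{A}^\rho(T)\Rightarrow\xi_0+\lambda T$.
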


\begin{proof}[Proof of Theorem~\ref{th:process_FL}] 
It follows by Lemmas~\ref{lem:fdd} and~\ref{lem:tightness} and \cite[Theorem 3.7.8(b)]{EthierKurtz} that the processes $\overline{Q}^\rho(\cdot)$ converge weakly in $\mathbf{D}$ to $\overline{Q}(\cdot)$.
\end{proof}

%Here we prove that the family of the scaled queue-length processes is relatively compact (in Lemma~\ref{lem:tightness}), and that their finite-dimensional distributions converge to those of the claimed fluid limit (in Lemma~\ref{lem:fdd}). Then it follows by \cite[Theorem 3.7.8(b)]{EthierKurtz} that the scaled queue-length processes converge weakly to the claimed fluid limit in the Skorokhod space $\mathbf{D}$.

In the proofs of Lemmas~\ref{lem:fdd} and~\ref{lem:tightness} below,  $A(\cdot)$ denotes the arrival process. We also use its fluid-scaled version
\[
\overline{A}^\rho (t) := \frac{A(\sqrt{\rho} t)}{\sqrt{\rho}}, \quad t \geq 0.
\]

\begin{proof}[Proof of Lemma~\ref{lem:fdd}] \begin{comment}The main idea of the proof is that the queue size at an arbitrary point in time is defined by how long ago there has been a departure and what was the size of the queue back then.

 to express the queue size at an arbitrary point in time in terms of  the time that has elapsed since the last departure instant $d_l$, the queue-level $X_l$ at that departure instant, and the arrival process $A(\cdot)$, post-departure queue levels $X_l$ $d_l$, and the arrival process.  introduced in the beginning of the section, whose scaling limits are known. \end{comment}
In case the limit initial condition $\overline{Q}(0)$ is deterministic,  assume without loss of generality that $t_1 > 0$. We prove the weak convergence by verifying the pointwise convergence of the distribution function.

The main idea is to express the queue size at an arbitrary point in time in terms of the departure instants $d_l$, post-departure queue levels $X_l$ and the arrival process, whose scaling limits are known. We achieve this by tracing back to the last departure instant $d_l$ and making sure that there has not been any other departures since then.

More formally, fix $x_j > 0$, $1 \leq j \leq n$. By specifying the last departure instants $d_l/\sqrt{\rho}$ preceding the $t_j$'s, we get
\[
\pr \{\overline{Q}^\rho(t_j) \leq x_j, 1 \leq j \leq n \} =
\sum_{0 \leq l_1 \leq l_2 \leq \ldots \leq l_n} \pr \{ B_{(l_j)_{j=1}^n}\},
\]
where
\[
B_{(l_j)_{j=1}^n} := \left\{ \frac{d_{l_j}}{\sqrt{\rho}} \leq t_j, \frac{d_{l_j+1}}{\sqrt{\rho}} > t_j, \overline{Q}^\rho(t_j) \leq x_j, 1 \leq j \leq n \right\}.
\]
On the event $B_{(l_j)_{j=1}^n}$, if there are no other departures between $d_{l_j}$ and $\sqrt{\rho} t_j$, then $\overline{Q}^\rho(t_j) = X_{l_j}/\sqrt{\rho} + \overline{A}^\rho(t_j) - A(d_{l_j})/\sqrt{\rho}$, and hence, for any $K \geq 1$,
\[
B_{(l_j)_{j=1}^n} \bigcap_{l=1}^K D_l
= B^1_{(l_j)_{j=1}^n} \bigcap_{l=1}^K D_l, \quad 0 \leq l_1 \leq l_2 \leq \ldots \leq l_n < K,
\]
where
\[
B^1_{(l_j)_{j=1}^n} := \left\{ \frac{d_{l_j}}{\sqrt{\rho}} \leq t_j, \frac{d_{l_j+1}}{\sqrt{\rho}} > t_j, \frac{X_{l_j}}{\sqrt{\rho}} + \overline{A}^\rho(t_j) - \frac{A(d_{l_j})}{\sqrt{\rho}} \leq x_j, 1 \leq j \leq n \right\}.
\]
\begin{comment}
 and consider the events
\begin{align*}
B_{(l_j)_{j=1}^n} &:= \left\{ \frac{d_{l_j}}{\sqrt{\rho}} \leq t_j, \frac{d_{l_j+1}}{\sqrt{\rho}} > t_j, \overline{Q}^\rho(t_j) \leq x_j, 1 \leq j \leq n \right\}, \\
B^1_{(l_j)_{j=1}^n} &:= \left\{ \frac{d_{l_j}}{\sqrt{\rho}} \leq t_j, \frac{d_{l_j+1}}{\sqrt{\rho}} > t_j, \frac{X_{l_j}}{\sqrt{\rho}} + \overline{A}^\rho(t_j) - \frac{A(d_{l_j})}{\sqrt{\rho}} \leq x_j, 1 \leq j \leq n \right\},
\end{align*}
 where we specify the last departure instants $d_l$ preceding the $t_j$'s, and in $B^1_{(l_j)_{j=1}^n}$, we replace the queue size by the expression that is valid if there are no other departures between the $d_{l_j}$ and the corresponding $t_j$'s. These events satisfy
\begin{gather*}
\pr \{\overline{Q}^\rho(t_j) \leq x_j, 1 \leq j \leq n \} =
\sum_{0 \leq l_1 \leq l_2 \leq \ldots \leq l_n} \pr \{ B_{(l_j)_{j=1}^n}\}, \\
B_{(l_j)_{j=1}^n} \bigcap_{l=1}^K D_l
= B^1_{(l_j)_{j=1}^n} \bigcap_{l=1}^K D_l, \quad 0 \leq l_1 \leq l_2 \leq \ldots \leq l_n < K.
\end{gather*}
\end{comment}
The above relations imply the bounds (recall that $\overline{D_l}$ denotes the complement of the event $D_l$)
\begin{gather*}
\sum_{0 \leq l_1 \leq l_2 \leq \ldots \leq l_n < K} \pr \{ B^1_{(l_j)_{j=1}^n}\} - \sum_{l=1}^K \pr \{ \overline{D_l} \}
\leq \pr \{ \overline{Q}^\rho(t_j) \leq x_j, 1 \leq j \leq n \} \\
\leq  \sum_{0 \leq l_1 \leq l_2 \leq \ldots \leq l_n < K} \pr \{ B^1_{(l_j)_{j=1}^n}\}  + \sum_{l=1}^K \pr \{  \overline{D_l} \} + \pr \left\{ \frac{d_K}{\sqrt{\rho}} \leq t_n \right\},
\end{gather*}
where we keep $K$ fixed for the moment and let $\rho \to \infty$. Note that, by the functional law of large numbers,
$\left(\overline{A}^\rho (t_j) - A(d_{l_j})/\sqrt{\rho} \right)_{j=1}^n \Rightarrow\lmb \left( t_j - \tau_{l_j}\right)_{j=1}^k$ jointly with the weak convergence in Lemma~\ref{lem:joint_MC_FL}. Then, by the absolute continuity of the weak limits in Lemma~\ref{lem:joint_MC_FL}, we have
\begin{align*}
\liminf_{\rho \to \infty} \pr \{ \overline{Q}^\rho(t_j) \leq x_j, 1 \leq j \leq n \} &\geq \sum_{0 \leq l_1 \leq l_2 \leq \ldots \leq l_n < K} \pr \{ B^\ast_{(l_j)_{j=1}^n} \}
 \\
\limsup_{\rho \to \infty} \pr \{ \overline{Q}^\rho(t_j) \leq x_j, 1 \leq j \leq n \}
&\leq \sum_{0 \leq l_1 \leq l_2 \leq \ldots \leq l_n < K} \pr \{ B^\ast_{(l_j)_{j=1}^n} \}    + \pr \left\{ \tau_K \leq t_n \right\},
\end{align*}
where the events
\[
B^\ast_{(l_j)_{j=1}^n} := \{ \tau_{l_j} \leq t_j, \tau_{l_j+1} > t_j, \overline{Q}(\tau_{l_j}) + \lambda (t_j - \tau_{l_j}) \leq x_j, 1 \leq j \leq n \}
\]
are analogues of the events $B^1_{(l_j)_{j=1}^n}$ for the fluid limit. Finally, we let $K \to \infty$, and then  Remark~\ref{rem:nonexplosive} implies that
\[
\pr \{ \overline{Q}^\rho(t_j) \leq x_j, 1 \leq j \leq n \} \to \sum_{0 \leq l_1 \leq l_2 \leq \ldots \leq l_n} \pr \{ B^\ast_{(l_j)_{j=1}^n} \} =  \pr \left\{ \overline{Q}(t_j) \leq x_j, 1 \leq j \leq n \right\}. \qedhere
\]
\end{proof}

To prove the relative compactness of the processes $\overline{Q}^\rho(\cdot)$ in $\mathbf{D}$, we employ the standard tools of compact containment and oscillation control.

\begin{proof}[Proof of Lemma~\ref{lem:tightness}]
This proof makes use of the following moduli of continuity: for any $x \in \mathbf{D}$ and $\dlt, T > 0$,
\begin{comment}\[
\omega(x, \dlt, T) := \sup_{\begin{subarray}{l}s,t \in [0,T], \\ |s-t| \leq \dlt \end{subarray}} |x(s)-x(t)|
\] \end{comment}
\[
\omega(x, \dlt, T) := \sup_{s,t \in [0,T], |s-t| \leq \dlt} |x(s)-x(t)|
\]
and
\[
\omega'(x, \delta, T) := \inf_{(t_i)_{i=1}^k} \max_{1 \leq i \leq k} \sup_{s, t \in [t_{i-1},t_i)} |x(s)-x(t)|,
\]
where the infimum is taken over all partitions $0=t_0<t_1< \ldots < t_{n-1} < T \leq t_n$ such that $\min_{1 \leq i \leq n} (t_i-t_{i-1}) > \delta$

By \cite[Corollary 3.7.4]{EthierKurtz}, in order to prove the lemma, it suffices to check that, for any $\eps, T > 0$, there exist $M, \delta > 0$ such that
\begin{subequations}
\begin{align}
\label{eq:comp_contain}
\liminf_{\rho \to \infty} &\pr \{ \overline{Q}^\rho(T) \leq M \} \geq 1-\eps, \\
\label{eq:osc_control}
\limsup_{\rho \to \infty} &\pr \{ \omega'(\overline{Q}^\rho, \delta, T) \geq \eps \} \leq \eps.
\end{align}
\end{subequations}

So fix $\eps, T > 0$. The compact containment property \eqref{eq:comp_contain} follows as we upper-bound the queue size by arrivals,
$
\overline{Q}^\rho(T) \leq \overline{Q}^\rho(0) + \overline{A}^\rho(T) \Rightarrow \xi_0 + \lambda T
$
as $\rho \to \infty$.

As for the proof of the oscillation control property  \eqref{eq:osc_control},  the key idea is to construct a partition that includes all departure instants up to $T$. This allows to bound oscillations of the scaled queue length $\overline{Q}^\rho(\cdot)$ by oscillations of the scaled arrival process $\overline{A}^\rho(\cdot)$, and the latter can be made arbitrarily small by the convergence of  $\overline{A}^\rho(\cdot)$ to a continuous limit as $\rho \to \infty$.

To construct the partition of interest (an example is  given in  Figure~\ref{fig:partition}), for each $K \geq 1$ and $\dlt >0$, introduce the event
\[
B_{K,\delta} := \left\{\frac{d_K}{\sqrt{\rho}} \geq T,  \frac{d_l-d_{l-1}}{\sqrt{\rho}} \geq 2\delta, 1 \leq l \leq K  \right\} \bigcap_{l=1}^K D_l,
\]
and on this event, define the indices
\[
i_0:= 0,  \quad i_l: = i_{l-1} + \left\lfloor \frac{d_l - d_{l-1}}{2\delta \sqrt{\rho}} \right\rfloor, \quad 1 \leq l \leq K,
\]
and the partition
\[
t'_{i_l+j} := d_j / \sqrt{\rho}  + 2\delta l, \quad 0 \leq l \leq K-1, \quad 0 \leq j \leq i_l-i_{l-1}-1,  \quad t'_{i_K} = d_K/\sqrt{\rho},
\]
where the partition elements that are relevant for the rest of the proof are those with indices up to
\[
n' := \min\{i = 0, 1, \ldots, i_K \colon  t'_i \geq T  \}.
\]

\begin{figure}%[!b] 
\centering
\includegraphics[scale=0.75]{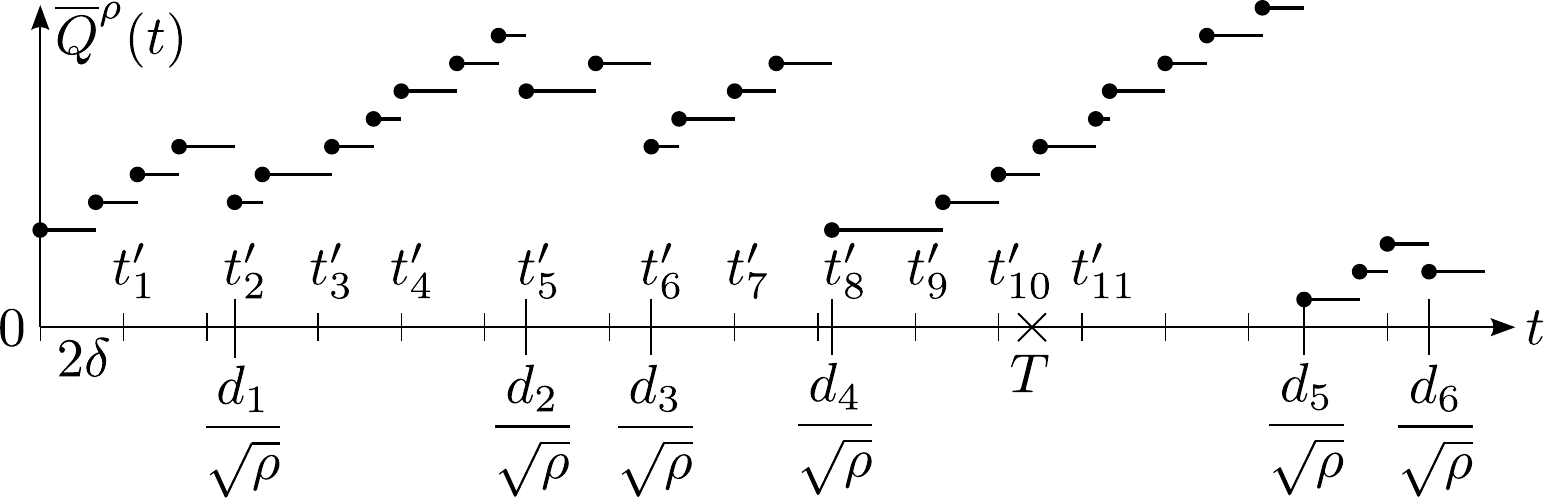}
\caption{A realization from the event $B_{6,\delta}$ (i.e., $K=6$). Here $i_1 = 2$, $i_3 = 5$, $i_3 = 6$, $i_4 = 8$, $i_5 = 13$, $i_6 = 14$, and the relevant partition elements are $(t'_i)_{i=0}^{11}$ (i.e., $n' = 11$).}
\label{fig:partition}
\end{figure}

Note that the constructed partition has the following properties: (a) there are no departures between the elements of $(t'_i)_{i=1}^{n'}$, (b) $t'_{n'} \leq T + 2 \dlt$, and (c) $2 \dlt \leq t'_i-t'_{i-1} \leq 4 \dlt$, $1\leq i \leq n'$. Due to  these properties, oscillations of  $\overline{Q}^\rho(\cdot)$ admit the following upper bound on the event $B_{K,\delta}$:
\begin{align*}
\omega'(\overline{Q}^\rho, \delta, T) &\leq \max_{1 \leq i \leq n'} \sup_{s, t \in [t'_{i-1},t'_i)} |\overline{Q}^\rho(s)-\overline{Q}^\rho(t)| \\
&\leq \max_{1 \leq i \leq n'}  \overline{A}^\rho(t'_i)-\overline{A}^\rho(t'_{i-1}) \leq \omega(\overline{A}^\rho, 4\delta, T+2 \delta).
\end{align*}
Thus, we have
\begin{align*}
\pr \{ \omega'(\overline{Q}^\rho, \delta, T) \geq \eps \} \leq&  \, \pr \{ \omega(\overline{A}^\rho, 4\delta, T+2\dlt) \geq \eps \}  \nonumber \\
&+ \pr \left\{\frac{d_K}{\sqrt{\rho}} < T \right\} + \sum_{l=1}^K \pr \left\{ \frac{d_l-d_{l-1}}{\sqrt{\rho}} < 2\delta \right\} + \sum_{l=1}^K \pr \{ \overline{D_l} \}. %\label{eq:9}
\end{align*}
Now, since $\overline{A}^\rho(\cdot) \Rightarrow \Lambda (\cdot)$ in $\mathbf{D}$ as $\rho \to \infty$, where $\Lambda(t) := \lmb t$, $t \geq 0$, and since, for any $\dlt', T' > 0$, the mapping $\omega(\cdot, \dlt', T') \colon \mathbf{D} \to [0,\infty)$ is continuous at any continuous $x \in \mathbf{D}$, it follows that
 for any $\dlt > 0$,
\begin{equation*} %\label{eq:8}
\omega(\overline{A}^\rho, 4\delta, T+2\dlt) \Rightarrow \omega(\Lambda, 4 \dlt, T + 2 \dlt) = 4 \dlt \lmb \quad \text{as $\rho \to \infty$}.
\end{equation*}
Together with Lemmas~\ref{lem:joint_MC_FL} and~\ref{lem:unique_service_completion}, the last two displays imply that,
for any any $\dlt \in (0, \eps/(4 \lmb))$ and  $K \geq 1$,
\[
\limsup_{\rho \to \infty} \pr \{ \omega'(\overline{Q}^\rho, \delta, T) \geq \eps \} \leq  \pr \left\{\tau_K \leq T \right\} + \sum_{l=1}^K \pr \{ \tau_l-\tau_{l-1} \leq 2\delta \}.
\]
Finally, by Remark~\ref{rem:nonexplosive}, one can pick a sufficiently large $K$ such that
$
\pr \left\{\tau_K \leq T \right\} \leq \eps/2,
$
and then pick a sufficiently small $\dlt \in (0, \eps/(4 \lmb)$ such that
$
\sum_{l=1}^K \pr \{ \tau_l-\tau_{l-1} \leq 2\delta \} \leq \eps/2,
$
and hence the oscillation control property \eqref{eq:osc_control} indeed holds.
\end{proof}

\section*{Appendix}
\appendix
Here we provide the proofs of Lemmas~\ref{lem:st_dn_moments} and~\ref{lem:tr_dn}, which assume Poisson arrivals.

\begin{proof}[Proof of Lemma~\ref{lem:st_dn_moments}] This proof makes use of the generating function \[
\pi(z) := \sum_{k \geq 0} \pi_k z^k, \quad |z| \leq 1.
\]
Since the arrival process is Poisson, the queue-length process $Q(\cdot)$ forms a continuous-time Markov chain with transition rates
\begin{align*} %\label{eq:ctmc_rates}
\begin{split}
q(i, i+1) &= \lambda, \quad i \geq 0, \\
q(i,j) &= \mu, \quad i \geq 1, \  0 \leq j \leq i-1.
\end{split}
\end{align*}
The balance equations for $Q(\cdot)$ and sets $\{0, 1, \ldots, k\}$ read as
\begin{equation*} %\label{eq:balance}
\lmb \pi_k = (k+1) \mu \sum_{i \geq k+1} \pi_i, \quad k \geq 0,
\end{equation*}
and imply the following relation
\[
\rho \pi(z) = \sum_{k \geq 0} z^k (k+1) \sum_{i \geq k+1} \pi_i = \sum_{i \geq 1} \pi_i \sum_{k=0}^{i-1} (k+1) z^k
=  r'(z),
\]
where
\[
r(z) := \sum_{i \geq 0} \pi_i \sum_{k=0}^i z^k.
\]

Note that, by the Taylor expansion around $z=1$, the generating function $\pi(z)$ admits the alternative representation
\begin{equation} \label{eq:taylor1}
\pi(z) = \sum_{n \geq 0} \frac{m_n}{n!} (z-1)^n.
\end{equation}
We now derive the Taylor expansion around $z=1$ for $r'(z)$ as well. We have
\begin{align*}
r(z)
 &= \sum_{i \geq 0} \pi_i \left( z^i + \frac{z^i-1}{z-1} \right) = \pi(z) + \frac{\pi(z)-1}{z-1} \\
 &= \sum_{n \geq 0} \frac{m_n}{n!} (z-1)^n + \sum_{ n\geq 1} \frac{m_n}{n!} (z-1)^{n-1},
\end{align*}
and hence
\begin{equation} \label{eq:taylor2}
r'(z) = \sum_{n \geq 0} \left( \frac{m_{n+1}}{n!} + \frac{m_{n+2}}{(n+2)!} (n+1) \right)  (z-1)^n.
\end{equation}
Finally, by equating the coefficients in the Taylor series \eqref{eq:taylor1} multiplied by $\rho$ and \eqref{eq:taylor2}, we obtain the relation \eqref{eq:moments_recursion}.
\end{proof}

\begin{proof}[Proof of Lemma~\ref{lem:tr_dn}]
With the notation
\[
p_k(t) := \pr \{ Q(t) = k \}, \quad k \geq 0,
\]
The Kolmogorov equations for the Markov process $Q(\cdot)$ read as follows:
\[
\begin{cases}
p_0'(t) &= -\lmb p_0(t) + \mu \sum_{l \geq 1} p_l(t), \\
p_k'(t) &= -(\lmb+k\mu) p_k(t) + \lmb p_{k-1}(t) + \mu \displaystyle{\sum_{l \geq k+1}} p_l(t), \quad k \geq 1.
\end{cases}
\]
By the identity $\sum_{l \geq k+1} p_l(t)  = 1 - \sum_{l =0}^k p_l(t)$, the above system can be rewritten as
\begin{equation} \label{eq:kolm_original}
\begin{cases}
p_0'(t) + (\lmb+\mu) p_0(t) &= \mu, \\
p_k'(t) + (\lmb+(k+1)\mu) p_k(t) &= \mu - \mu \displaystyle{\sum_{l=0}^{k-1}} p_l(t)  + \lmb p_{k-1}(t), \quad k \geq 1.
\end{cases}
\end{equation}
We now multiply the $k$-th equation  by $e^{(\lmb+(k+1)\mu)t}$ and make the substitution
\[
u_k(t) := e^{(\lmb+(k+1)\mu)t} p_k(t) , \quad k \geq 0,
\]
which yields the new system
\begin{equation} \label{eq:kolm_u}
\begin{cases}
u_0'(t) &= \mu e^{(\lmb+\mu)t},
\\
u_k'(t) &= \mu e^{(\lmb+(k+1)\mu)t} - \mu \displaystyle{ \sum_{l=0}^{k-1}} e^{(k-l)\mu t} u_l(t) + \lmb e^{\mu t} u_{k-1}(t), \quad k \geq 1.
\end{cases}
\end{equation}
It can be shown by induction that any solution to  \eqref{eq:kolm_u} is of the form
\begin{equation*}
\begin{cases}
u_0(t) &= \widetilde{C}_{0,1} e^{(\lmb+\mu)t} + \widetilde{C}_{0,0}, \\
u_k(t) &= \widetilde{C}_{k,k+1} e^{(\lmb+(k+1)\mu)t} + \displaystyle{\sum_{l=1}^k} \widetilde{C}_{k,l} e^{l \mu t} + \widetilde{C}_{k,0}, \quad k \geq 1,
\end{cases}
\end{equation*}
with some constants  $\widetilde{C}_{k,l}$. Hence, any solution to the original system \eqref{eq:kolm_original} is of the form~\eqref{eq:kolm_sol}.

It is left to check that the constants $C_{k,i}$ in \eqref{eq:kolm_sol} satisfy the recursive relations \eqref{eq:kolm_induction}.  The identity \eqref{eq:kolm_const} is the limit of \eqref{eq:kolm_sol} as $t \to \infty$. The relation \eqref{eq:kolm:middle} follows as one plugs \eqref{eq:kolm_sol} back into the system \eqref{eq:kolm_original} and equates the corresponding coefficients of the (linearly independent) functions $e^{-(\lmb+i\mu)t}$, $1 \leq i \leq k$. Finally, the relation \eqref{eq:kolm_last} follows by putting $t=0$ in \eqref{eq:kolm_sol}.
\end{proof}

\vspace{2mm}

{\small \noindent {\bf Acknowledgments.} This research was partly funded by the NWO Gravitation Project N{\sc etworks}, Grant Number 024.002.003.  The authors thank F. Simatos, P. Taylor, and B. Zwart for helpful discussions. }

\thebibliography{99}

\bibitem{Asmussen} Asmussen, S. {\it Applied Probability and Queues.} Springer, 2nd edition, 2003. 

\bibitem{PS-ROS} Borst, S.C., Boxma, O.J., Morrison, J.A., Nunez Queija, R. The equivalence between processor sharing and service in random order. {\it Operations Research Letters}, 31(4):254--262, 2003.

\bibitem{OnnoGC} Boxma, O., Perry, D., Stadje, W., Zacks, S. A Markovian growth-collapse model. {\it Advances in Applied Probability}, 38(1):221--243, 2006.

\bibitem{Malrieu} Chafa\"\i, D., Malrieu, F., Paroux, K. On the long time behavior of the TCP window size process. {\it Stochastic Processes and their Applications}, 120(8):1518--1534, 2010.

\bibitem{AIMD1} Dumas, V., Guillemin, F., Robert, Ph. A Markovian analysis of additive-increase multiplicative-decrease algorithms. {\it Advances in Applied Probability}, 34(1):85--111, 2002.

\bibitem{EthierKurtz} Ethier, S.N., Kurtz, T.G. {\it Markov Processes: Characterization and Convergence.} Wiley, 1986.

\bibitem{PTaylor} G\"{o}bel, J., Keeler, H.P., Taylor, P.G., Krzesinski, A.E. Bitcoin blockchain dynamics: the selfish-mine strategy in the presence of propagation delay.  {\it Performance Evaluation}, 104:23--41, 2016. 

\bibitem{AIMD2} Guillemin, F.,  Robert, Ph., Zwart, B. AIMD algorithms and exponential functionals. {\it Annals of Applied Probability}, 14(1):90--117, 2004.

\bibitem{Last} Last, G. Ergodicity properties of stress release, repairable system and workload models. {\it Advances in Applied Probability}, 36(2):471-498, 2004.

%\bibitem{MauZwart} Maulik, Zwart
\end{document}